\newcommand*{\rom}[1]{\expandafter\@slowromancap\romannumeral #1@}
  \theoremstyle{plain}
    \newtheorem{thm}{Theorem}[section]
    \newtheorem{proposition}[thm]{Proposition}
   \newtheorem{lemma}[thm]{Lemma}
    \newtheorem{subsec}[thm]{}
\theoremstyle{definition}
    \newtheorem{definition}[thm]{Definition}
        \newtheorem{remark}[thm]{Remark}
    \newtheorem{exam}[thm]{Example}
\theoremstyle{remark}
\title{}
\author{}
\date{}
\begin{document}

\title[Homotopy conformal algebras]{Homotopy conformal algebras}

\author{Anupam Sahoo}
\address{Department of Mathematics,
Indian Institute of Technology, Kharagpur 721302, West Bengal, India.}
\email{anupamsahoo23@gmail.com}

\author{Apurba Das}
\address{Department of Mathematics,
Indian Institute of Technology, Kharagpur 721302, West Bengal, India.}
\email{apurbadas348@gmail.com, apurbadas348@maths.iitkgp.ac.in}

\begin{abstract}
The notion of conformal algebras was introduced by Victor G. Kac using the axiomatic description of the operator product expansion of chiral fields in conformal field theory. The structure theory, representations and cohomology of Lie and associative conformal algebras are extensively studied in the literature. In this paper, we first introduce $A_\infty$-conformal algebras as the homotopy analogue of associative conformal algebras, provide some equivalent descriptions and prove the homotopy transfer theorem. We characterize some $A_\infty$-conformal algebras in terms of Hochschild cohomology classes of associative conformal algebras. Next, we introduce associative conformal $2$-algebras as the categorification of associative conformal algebras. We show that the category of associative conformal $2$-algebras and the category of $2$-term $A_\infty$-conformal algebras are equivalent. Finally, we consider $L_\infty$-conformal algebras and find their relations with $A_\infty$-conformal algebras.
\end{abstract}

\maketitle



\medskip

\begin{center}

\noindent {2020 MSC classifications:} 16E40, 16S99, 18N25, 18N40.

\noindent {Keywords:} Conformal algebras, $A_\infty$-conformal algebras, Conformal $2$-algebras, $L_\infty$-conformal algebras.

\end{center}



\thispagestyle{empty}

\tableofcontents

\section{Introduction}
\subsection{Conformal algebras} The concept of conformal algebras was first introduced by V. G. Kac as a useful tool to study vertex algebras \cite{kac-1}, \cite{kac-2}. Conformal algebras encode the axiomatic description of the operator product expansion of chiral fields in conformal field theory. They appear naturally in the study of formal distribution Lie algebras. They also have connections with infinite-dimensional Lie algebras satisfying the locality property. Hence the theory of finite conformal algebras is useful to provide a classification of some infinite-dimensional Lie algebras. Lie conformal algebras also have applications in two-dimensional conformal field theory and string theory \cite{kac-1}, \cite{sole-kac}. In the last twenty-five years,
the structure theory and representations of Lie conformal algebras and associative conformal algebras are extensively studied by several authors \cite{andrea}, \cite{fattori-kac}, \cite{retakh}, \cite{Kolesnikov}, \cite{roitman-1}, \cite{roitman-3}, \cite{hong-su}.

The notion of cohomology theory in the context of operator product expansion was first considered in \cite{kimura-voronov} for vertex algebras and conformal field theories. Subsequently, the cohomology theory of conformal algebras was systematically described by Bakalov, Kac and Voronov \cite{bakalov-kac-voronov}. The cohomology of Lie conformal algebras generalizes the Chevalley-Eilenberg cohomology of usual Lie algebras and the cohomology of associative conformal algebras generalizes the classical Hochschild cohomology of associative algebras. These cohomologies describe the deformation of respective conformal algebras. See \cite{sole-kac}, \cite{dolguntseva}, \cite{kolesnikov-kozlov}, \cite{hou} for more details about the cohomology of conformal algebras and their applications. We will follow the conventions of \cite{sole-kac} while considering the cohomology (although the cohomology is the same with \cite{bakalov-kac-voronov}).

\subsection{Homotopy algebras and 2-algebras} The first instance of homotopy algebras arises in the work of Stasheff in the recognization of topological loop spaces \cite{stasheff}. Since its inception, $A_\infty$-algebras (also called strongly homotopy associative algebras) have found important applications in homotopy theory, topology and geometry \cite{penkava-schwarz}, \cite{keller}. Among other flexibilities, $A_\infty$-algebras satisfy the homotopy transfer theorem whereas usual associative algebras do not. The concept of $L_\infty$-algebras (which are the Lie analogue of $A_\infty$-algebras) was introduced by  Lada, Markl and Stasheff \cite{lada-stasheff}, \cite{lada-markl}. They have close connections with deformation theory, higher differential geometry and mathematical physics \cite{fre-zam}, \cite{kontsevich-soibelman}, \cite{markl}. On the other hand, in \cite{baez} Baez and Crans introduced the notion of Lie $2$-algebras as the categorification of Lie algebras and find a relationship with the Zamolodchikov tetrahedron equation. They showed that the category of Lie $2$-algebras is equivalent to the category of $2$-term $L_\infty$-algebras. In the same article, the authors observed that skeletal $L_\infty$-algebras are characterized by the third cocycles of the Chevalley-Eilenberg complex of Lie algebras, and strict $L_\infty$-algebras are characterized by crossed modules of Lie algebras. These results are fundamental in connections with Lie algebras, $L_\infty$-algebras and Lie $2$-algebras. All these results are valid for algebras of other types. In particular, one can also consider $2$-term $A_\infty$-algebras and associative $2$-algebras and show their categorical equivalence. The standard skew-symmetrization functor from the category of associative algebras to the category of Lie algebras has a generalization to the context of homotopy algebras. That is, there is a commutative diagram 
\[
\begin{tikzcd}
\mathrm{associative~ algebras ~} \ar[r, hook] \ar[d, hook] &  2\text{-term }A_\infty\text{-algebras} \simeq \text{associative }2\text{-algebras}  \ar[r, hook] \ar[d, hook] & A_\infty\text{-algebras} \ar[d, hook]\\
\mathrm{Lie~ algebras}  \ar[r, hook] &   2\text{-term }L_\infty\text{-algebras} \simeq \text{Lie }2\text{-algebras 
  }
 \ar[r, hook]  & L_\infty\text{-algebras}.
\end{tikzcd}
\]

\medskip

\subsection{Bridges between conformal algebras, homotopy algebras and 2-algebras} Our aim in this paper is to find bridges between conformal algebras, homotopy algebras and $2$-algebras. To do so, we first introduce $A_\infty$-conformal algebras as the conformal analogue of $A_\infty$-algebras. We also define representations of an $A_\infty$-conformal algebra. Given a graded $\mathbb{C}[\partial]$-module $\mathcal{A}$, we construct a graded Lie algebra whose Maurer-Cartan elements correspond to $A_\infty$-conformal algebra structures on $\mathcal{A}$. This characterization allows us to define the cohomology of an $A_\infty$-conformal algebra with coefficients in a representation. Our cohomology of an $A_\infty$-conformal algebra generalizes the Hochschild cohomology of associative conformal algebras.
Next, we prove (various forms of) the homotopy transfer theorem for $A_\infty$-conformal algebras which shows that the category of $A_\infty$-conformal algebras is much flexible than the category of associative conformal algebras.
We also consider $2$-term $A_\infty$-conformal algebras and characterize some important classes of  $2$-term $A_\infty$-conformal algebras in terms of Hochschild cohomology of associative conformal algebras. We denote the category of $2$-term $A_\infty$-conformal algebras by ${\bf 2A_\infty conf}$. Next, we consider associative conformal $2$-algebras as the categorification of associative conformal algebras.  The collection of all associative conformal 2-algebras and homomorphisms between them forms a
category, denoted by {\bf conf2Alg}. We show that the category {\bf conf2Alg} of associative conformal $2$-algebras and the category ${\bf 2A_\infty conf}$ of $2$-term $A_\infty$-conformal algebras are equivalent. 

In the Lie context, it is worth mentioning that the concepts of $2$-term $L_\infty$-conformal algebras and Lie conformal $2$-algebras are recently considered by Zhang \cite{tao}. The author also showed that the category of $2$-term $L_\infty$-conformal algebras and the category of Lie conformal $2$-algebras are equivalent. However, he didn't consider the full concept of $L_\infty$-conformal algebras. In this paper, we also fill up this gap 
by introducing $L_\infty$-conformal algebras as the homotopy analogue of Lie conformal algebras. The notion of Zhang is then a particular case of our $L_\infty$-conformal algebras. We show that a suitable skew-symmetrization of an $A_\infty$-conformal algebra yields an $L_\infty$-conformal algebra. Hence we obtain the following diagram that generalizes the above diagram in the conformal context




\[
\begin{tikzcd}
\mathrm{associative~conf.~ algebras ~} \ar[r, hook] \ar[d, hook] &  \substack{2\text{-term }A_\infty\text{-conf. ~algebras} \\ \simeq ~ \text{associative~conf. }2\text{-algebras} }  \ar[r, hook] \ar[d, hook] & A_\infty\text{-conf.~algebras} \ar[d, hook]\\
\mathrm{Lie~conf.~ algebras}  \ar[r, hook] &  \substack{   2\text{-term }L_\infty\text{-conf.~algebras} \\ \simeq ~\text{Lie~conf. }2\text{-algebras 
  }}
 \ar[r, hook]  & L_\infty\text{-conf.~algebras}.
\end{tikzcd}
\]

\medskip

\subsection{Organization of the paper} The paper is organized as follows. 
In section \ref{section-2}, we recall associative conformal algebras and their Hochschild cohomology with coefficients in a conformal bimodule. In section \ref{section-3}, we introduce the notion of $A_\infty$-conformal algebras which are the homotopy analogue of associative conformal algebras. We also consider representations of an $A_\infty$-conformal algebra and introduce the cohomology of an $A_\infty$-conformal algebra with coefficients in a representation.
In section \ref{section-4}, we mainly consider $2$-term $A_\infty$-conformal algebras and classify some particular classes of such algebras. The notion of conformal associative $2$-algebras is introduced in section \ref{section-5}. We also show that the categories ${\bf 2A_\infty conf}$ and ${\bf conf2Alg}$ are equivalent. Finally, in section \ref{section-6}, we recall Lie conformal algebras and introduce $L_\infty$-conformal algebras, and show that a suitable skew-symmetrization of an $A_\infty$-conformal algebra yields an $L_\infty$-conformal algebra.




\subsection*{Notations}

All vector spaces, linear and multilinear maps, and tensor products are over the field $\mathbb{C}$ of complex numbers.

\section{Associative conformal algebras and their cohomology}\label{section-2}

In this section, we recall associative conformal algebras, conformal bimodules and the cohomology theory of associative conformal algebras. Our main referances are \cite{bakalov-kac-voronov}, \cite{dolguntseva}, \cite{kolesnikov-kozlov}.

\begin{definition}
An {\em associative conformal algebra} is a $\mathbb{C}[\partial]$-module $A$ equipped with a $\mathbb{C}$-linear map (called the $\lambda$-multiplication) $\cdot_\lambda {\cdot} : A\otimes A \to A[\lambda]$, $a\otimes b \mapsto a  _\lambda b$ satisfying the conformal sesquilinearity condition:
\begin{align}\label{confor-ses}
 (\partial a)_{\lambda} b = - \lambda a_\lambda b \quad \mathrm{and } \quad
  a_{\lambda} (\partial b) = (\partial + \lambda)a_{\lambda} b,
  \end{align}
and the following conformal associativity:  $a _{\lambda}(b_{\mu} c) =  (a_{\lambda} b)_{\lambda+ \mu} c$, for all $a, b, c \in A$.
\end{definition}
An associative conformal algebra as above may be simply denoted by $A$ when the $\lambda$-multiplication map is clear from the context.

Let $A$ be an associative conformal algebra with the $\lambda$-multiplication $\cdot_\lambda \cdot$. For each $j \geq 0$, we define the $j$-th product on $A$ by a $\mathbb{C}$-linear map $\cdot_{(j)} \cdot : A \otimes A \rightarrow A$ that satisfies
\begin{align}\label{lambda-j}
    a_\lambda b = \sum_{j \geq 0} \frac{\lambda^j}{j!} a_{(j)} b, \text{ for } a, b \in A.
\end{align}
Since $a_\lambda b$ is an $A$-valued polynomial in $\lambda$, there are finitely many $j$'s (that depends on $a$ and $b$ both) for which $a_{(j)} b \neq 0$. With the correspondence (\ref{lambda-j}) between the $\lambda$-multiplication and $j$-th products, the conformal sesquilinearity condition (\ref{confor-ses}) of the $\lambda$-multiplication is equivalent to the conditions
\begin{align*}
    (\partial a)_{(j)} b = - j a_{(j-1)} b, \quad a_{(j)} (\partial b) = \partial (a_{(j)} b) + j a_{(j-1)} b, \text{ for } a, b \in A \text{ and } j \geq 0.
\end{align*}
Moreover, the conformal associativity is equivalent to
\begin{align}\label{conf-ass-j}
    a_{(j)} (b_{(k)} c) = \sum_{p=0}^j \binom{j}{p} (a_{(p)} b)_{(j+k-p)} c, \text{ for all } a, b, c \in A \text{ and } j, k \geq 0.
\end{align}

Associative conformal algebras appear naturally in the context of formal distribution associative algebras \cite{bakalov-kac-voronov}. Another class of examples are current conformal algebras associated with associative algebras. More precisely, let $A$ be an associative algebra. Then the $\mathbb{C}[\partial]$-module $\mathrm{Cur } A = \mathbb{C}[\partial] \otimes A$ carries an associative conformal algebra structure with the $\lambda$-multiplication $a_\lambda b := a b$, for all $a, b \in A$.

    Let $M$ be any $\mathbb{C}[\partial]$-module. A {\em conformal linear endomorphism} of $M$ is a $\mathbb{C}$-linear map $f: M \rightarrow M[\lambda],~ m \mapsto f_\lambda (m)$ that satisfies $f_\lambda (\partial m) = (\partial + \lambda) f_\lambda (m)$, for all $m \in M$. The space of all conformal linear endomorphisms of $M$ is denoted by $\mathrm{Cend}(M)$. Note that $\mathrm{Cend}(M)$ has canonical structure of a $\mathbb{C}[\partial]$-module given by $(\partial f)_\lambda (m) := - \lambda f_\lambda (m)$, for all $m \in M$. Then $\mathrm{Cend}(M)$ is an associative conformal algebra with the $\lambda$-multiplication
    \begin{align*}
        (f_\lambda g)_\mu (m) := f_\lambda (g_{\mu - \lambda} (m)), \text{ for } f, g \in \mathrm{Cend}(M), m \in M.
    \end{align*}

\begin{definition}
    Let $A$ be an associative conformal algebra. A {\em conformal $A$-bimodule} is a $\mathbb{C}[\partial]$-module $M$ equipped with two $\mathbb{C}$-linear maps (called the left and right $\lambda$-actions, respectively) $A \otimes M \to M[\lambda]$, $a \otimes m \mapsto a_{\lambda} m$ and $M \otimes A \to M[\lambda]$, $m \otimes a \mapsto m_{\lambda}a$ satisfying the conformal sesquilinearity conditions 
    \begin{align*}
& (\partial a)_{\lambda} m = - \lambda a_\lambda m,  \quad  a_{\lambda} (\partial m) = (\partial + \lambda)a_\lambda m,\\& (\partial m)_{\lambda} a = - \lambda m_\lambda a, \quad m_{\lambda} (\partial a) = (\partial + \lambda)m_\lambda a,
    \end{align*}
and the following bimodule conditions
    \begin{align*}
        a_{\lambda}(b_{\mu} m)  =  (a_{\lambda} b)_{\lambda+ \mu} m, \quad  a_{\lambda}(m_{\mu} b) =  (a_{\lambda} m)_{\lambda+ \mu} b ~~~
      \mathrm{ and } ~~~ m_{\lambda}(
      a_{\mu} b) =  (m_{\lambda} a)_{\lambda+ \mu} b,  \text{ for all } a, b \in A, m \in M.
    \end{align*}
\end{definition}

\medskip

It follows from the above definition that any associative conformal algebra $A$ is itself a conformal $A$-bimodule, where both the left and right $\lambda$-actions are given by the $\lambda$-multiplication. This is called the adjoint conformal $A$-bimodule.

In the following, we recall the Hochschild cohomology of associative conformal algebras introduced in \cite{bakalov-kac-voronov}. This cohomology is the conformal analogue of the standard Hochschild cohomology theory of associative algebras. Let $A$ be an associative conformal algebra and $M$ be a conformal $A$-bimodule. Let $n \geq 1$ be a natural number. A $\mathbb{C}$-linear map $\varphi : A^{\otimes n} \to M[\lambda_{1},\ldots,\lambda_{n-1}]$, $a_{1}\otimes \cdots\otimes a_{n} \mapsto \varphi_{\lambda_{1},\ldots,\lambda_{n-1}}(a_{1},\ldots,a_{n}) $ is called a {\em conformal sesquilinear map} if $\varphi$ satisfies
\begin{align*}
  \varphi_{\lambda_{1},\ldots,\lambda_{n-1}}(a_{1}, \ldots,\partial a_{i},\ldots, a_{n}) =~& -\lambda_{i}\varphi_{\lambda_{1},\ldots,\lambda_{n-1}}(a_{1},\ldots,a_{n}), \mathrm{ ~ for ~} i = 1, 2, \ldots ,n-1, \\
 \varphi_{\lambda_{1},\ldots,\lambda_{n-1}}(a_{1},\ldots,a_{n-1},\partial a_{n}) =~& (\partial + \lambda_{1} + \cdots + \lambda_{n-1})\varphi_{\lambda_{1},\ldots,\lambda_{n-1}}(a_{1},\ldots,a_{n}).
\end{align*}
We denote the set of all such conformal sesquilinear maps by $\mathrm{Hom}_{cs}(A^{\otimes n}, M[\lambda_{1},\ldots,\lambda_{n-1}])$. Note that, for $n = 1$, we have 
\begin{align*}
    \mathrm{Hom}_{cs}(A,M) = \mathrm{Hom}_{\mathbb{C}[\partial]}(A,M),
\end{align*}
the set of all $\mathbb{C}[\partial]$-module homomorphisms from $A$ to $M$. We are now ready to define the Hochschild cochain complex of the associative conformal algebra $A$ with coefficients in the conformal $A$-bimodule $M$.\\
For each $n\geq 0$, the space of $n$-cochains $C^{n}(A,M)$ is given by  
\begin{equation*}
C^{n}(A,M)=
    \begin{cases}
        M/{\partial M} & \text{if  } n = 0,\\
        \mathrm{Hom}_{cs}(A^{\otimes n}, M[\lambda_{1},\ldots,\lambda_{n-1}]) & \text{if  }  n \geq 1.
    \end{cases}
\end{equation*}   
There is a map $\delta :C^{n}(A,M) \to  C^{n+1}(A,M)$ given by 
\begin{align*}
    \delta( m + \partial{M})(a) = (a_{-\lambda - \partial}m - m_{\lambda} a) ~ \! \vline_{\lambda = 0}, \mathrm{~for~}m + \partial{M} \in M/{\partial{M}}\mathrm{~and~}a \in A,
    \end{align*}
 and 
\begin{align*}
(\delta{\varphi})_{\lambda_{1},\ldots,\lambda_{n}}(a_{1},\ldots,a_{n+1}) =&~{a_{1}}_{\lambda_{1}} ( \varphi_{\lambda_{2},\ldots,\lambda{n}}(a_{2},\ldots,a_{n+1})) \\
+& \sum_{i=1}^{n}{(-1)^{i} ~ \varphi_{\lambda_{1},\ldots,\lambda_{i-1},\lambda_{i}+\lambda_{i+1},\lambda_{i+2},\ldots,\lambda_{n}}(a_{1},\ldots,a_{i-1},{a_{i}}_{\lambda_{i}}a_{i+1},a_{i+2},\ldots,a_{n+1})}\\
+&(-1)^{n+1} ( {\varphi_{\lambda_{1},\ldots,\lambda_{n-1}}(a_{1},\ldots,a_{n})})_{{\lambda_{1}+\cdots+\lambda_{n}}} a_{n+1},
\end{align*}
for $\varphi \in {C^{n \geq 1}(A,M)} = \mathrm{Hom}_{cs}(A^{\otimes n}, M[\lambda_{1},\ldots,\lambda_{n-1}]) $ and $a_{1},\ldots,a_{n+1} \in A$.
One can easily verify that $\delta$ is a differential, that is, $\delta^{2} = 0$. In other words, $\{C^{\bullet}(A, M), \delta \}$ is a cochain complex. Let $Z^{n}(A,M) = \{ \varphi \in C^{n}(A,M ) ~ |~\delta{\varphi } = 0\}$ be the space of all $n$-cocycles and $B^{n}(A,M) = \{ \delta{\varphi } ~ | ~ \varphi \in C^{n-1}(A,M )  \}$ be the space of all $n$-coboundaries. The quotients 
\begin{align*}
  {H}^{n}(A,M) := {\frac{{Z}^{n}(A,M)}{{{B}^{n}(A,M)}}, ~\mathrm{ for }~ n \geq 0} 
\end{align*}
are called the {\em Hochschild cohomology groups} of the associative conformal  algebra $A$ with coefficients in the conformal $A$-bimodule $M$.

\section{Homotopy associative conformal algebras} \label{section-3}
In this section, we introduce $A_{\infty}$-conformal algebras as the homotopy analogue of associative conformal algebras. Given a graded $\mathbb{C}[\partial]$-module $\mathcal{A}$, we also construct a graded Lie algebra whose Maurer-Cartan elements correspond to $A_\infty$-conformal algebra structures on $\mathcal{A}$. Next, we define representations of an $A_\infty$-conformal algebra and introduce the cohomology with coefficients in a representation. Finally, we prove homotopy transfer theorems for $A_\infty$-conformal algebras.

 
\begin{definition}
    (\cite{stasheff}) An {\em $A_{\infty}$-algebra} is a pair $(\mathcal{A}, \{\mu_{k}\}_{k \geq 1})$ consisting of a graded vector space $\mathcal{A} = \oplus _{ i \in \mathbb{Z}} \mathcal{A}_{i}$ equipped with a collection of graded linear maps $\{\mu_{k} : \mathcal{A}^{\otimes k } \to \mathcal{A}\}_{k \geq 1}$ with $\mathrm{deg}(\mu _{k}) = k-2 $ for $k \geq 1$, satisfying the following set of identities:
    \begin{align*}
        \sum_{k+l = n+1}\sum_{i =1}^{k}{(-1)^{i(l+1) +l(|a_{1}|+\cdots+|a_{i-1}|)}~ \mu_{k}(a_{1},\ldots,a_{i-1},\mu_{l}(a_{i},\ldots,a_{i+l-1}),a_{i+1},\ldots,a_{n})} = 0,
    \end{align*}
    for all $n \geq 1$ and homogeneous elements $a_{1},\ldots,a_{n} \in \mathcal{A}$.
\end{definition}
Note that, in an arbitrary $A_{\infty}$-algebra, the degree $0$ multiplication map $\mu_2 : \mathcal{A} \otimes \mathcal{A} \rightarrow \mathcal{A}$ in general doesn't satisfy the usual associativity. However, it does satisfy the associativity up to a coherent homotopy. For this reason, $A_\infty$-algebras are also called {\em strongly homotopy associative algebras}. Any associative algebra can be regarded as an $A_{\infty}$-algebra whose underlying graded vector space is concentrated in degree $0$.

Let $\mathcal{A} = \oplus _{i \in \mathbb{Z}} \mathcal{A}_{i}$ be a graded vector space. Then for any $k \geq 1$, the space $\mathcal{A}[\lambda_{1},\ldots,\lambda_{k-1}]$ can be equipped with a graded vector space structure by 
\begin{align*}
    \mathcal{A}[\lambda_{1},\ldots,\lambda_{k-1}] = \oplus_{i \in \mathbb{Z}} \mathcal{A}_{i}[\lambda_{1},\ldots,\lambda_{k-1}].
\end{align*}

\begin{definition}\label{definition 3.2}
    An {\em $A_{\infty}$-conformal algebra} (or a {\em strongly homotopy associative conformal algebra}) is a graded $\mathbb{C}[\partial]$-module $\mathcal{A} = \oplus _{i \in \mathbb{Z}} \mathcal{A}_{i}$ equipped with a collection $\{\mu_{k}: \mathcal{A}^{\otimes k } \to \mathcal{A}[\lambda_{1},\ldots,\lambda_{k-1}]\}_{k \geq 1}$ of graded $\mathbb{C}$-linear maps with $\mathrm{deg}(\mu_{k}) = k-2$ for $k \geq 1$, satisfying the following conditions:
    
   - each $\mu_k$ is conformal sesquilinear in the sense that
    \begin{align*}
      (\mu_{k})_{\lambda_{1},\ldots,\lambda_{k-1}}(a_{1},\ldots,\partial{a_{i}},\ldots,a_{k}) =~& - \lambda_{i}(\mu_{k})_{\lambda_{1},\ldots,\lambda_{k-1}}(a_{1},\ldots,a_{k}), \mathrm{~for~}  i = 1, \ldots ,k-1,\\
        (\mu_{k})_{\lambda_{1},\ldots,\lambda_{k-1}} (a_{1},\ldots,a_{k-1},\partial{a_k}) =~& (\partial + \lambda_{1}+\cdots+ \lambda_{k-1})(\mu_{k})_{\lambda_{1},\ldots,\lambda_{k-1}}(a_{1},\ldots,a_{k}),
    \end{align*}
   
    - the following set of identities are hold:
    \begin{align} \label{a-inf-conf}
        \sum_{k+l=n+1} & \sum_{i=1}^{k}(-1)^{i(l+1) +l(|a_{1}|+\cdots+|a_{i-1}|)}\\&
     (\mu_{k})_{\lambda_{1},\ldots,\lambda_{i-1},\lambda_{i}+\cdots+\lambda_{i+l-1},\ldots,\lambda_{n-1}}(a_{1},\ldots,a_{i-1},(\mu_{l})_{\lambda_{i},\ldots,\lambda_{i+l-2}}(a_{i},\ldots,a_{i+l-1}),a_{i+l},\ldots,a_{n})= 0, \nonumber
    \end{align}
    for any $n \geq 1$ and homogeneous elements $a_{1},\ldots,a_{n} \in \mathcal{A}$.
\end{definition}

The collection $\{\mu_{k} : \mathcal{A}^{\otimes k } \to \mathcal{A}[\lambda_{1},\ldots,\lambda_{k-1}]\}_{k \geq 1}$ of maps are called the structure maps of the $A_{\infty}$-conformal algebra. We often denote an $A_{\infty}$-conformal algebra as above by $\mathcal{A}$ when the structure maps are clear from the context.

The above definition of an $A_\infty$-conformal algebra has the following consequences. Note that, for $n=1$, the identity (\ref{a-inf-conf}) is equivalent to $(\mu_1)^2 = 0$. In other words, the $\mathbb{C} [\partial]$-linear degree $-1$ map $\mu_1 : \mathcal{A} \rightarrow \mathcal{A}$ makes $(\mathcal{A}, \mu_1)$ into a chain complex in the category of $\mathbb{C} [\partial]$-modules. Next, for $n=2$, the identity (\ref{a-inf-conf}) is same as
\begin{align*}
    \mu_1 \big(    (\mu_2)_\lambda (a, b) \big) = (\mu_2)_\lambda \big(  \mu_1 (a) , b  \big) + (-1)^{|a|} (\mu_2)_\lambda \big(  a, \mu_1 (b) \big),
\end{align*}
for all homogeneous elements $a, b \in \mathcal{A}$. It says that $\mu_1$ is a graded derivation for the degree zero $\lambda$-multiplication $\mu_2 : \mathcal{A} \otimes \mathcal{A} \rightarrow \mathcal{A} [\lambda]$. Similarly, for $n=3$, we have
\begin{align*}
    &(\mu_2)_\lambda \big(  a, (\mu_2)_\mu (b, c) \big) - (\mu_2)_{\lambda + \mu} \big( (\mu_2)_\lambda (a, b), c   \big) \\
   & = - \big\{  \mu_1 \big(  (\mu_3)_{\lambda, \mu} (a, b, c)  \big) + (\mu_3)_{\lambda , \mu} \big(  \mu_1 (a), b, c \big) + (-1)^{|a|} (\mu_3)_{\lambda , \mu} (a, \mu_1 (b) , c) \\
    & \qquad \qquad + (-1)^{|a| + |b|} (\mu_3)_{\lambda, \mu} (a, b, \mu_1 (c))   \big\},
\end{align*}
for all homogeneous elements $a, b, c \in \mathcal{A}$. This shows that the $\lambda$-multiplication $\mu_2$ doesn't satisfy (in general) the conformal associativity. However, it holds up to an exact term of the conformal sesquilinear map $\mu_3$. For this reason, an $A_\infty$-conformal algebra can be realized as the homotopy analogue of associative conformal algebras. Note that, for higher values of $n$, we have higher homotopy analogues of the conformal associativity.

Let $\mathcal{A}$ be an $A_\infty$-conformal algebra with the structure maps $\{ \mu_k : \mathcal{A}^{\otimes k} \rightarrow \mathcal{A}[\lambda_1, \ldots, \lambda_{k-1}] \}_{k \geq 1}$. For any $k \geq 1$ and a $(k-1)$ tuple $J= (j_1, \ldots, j_{k-1})$ of non-negative integers, we define a graded $\mathbb{C}$-linear map $(\mu_k)_{(j_1, \ldots, j_{k-1})} : \mathcal{A}^{\otimes k} \rightarrow \mathcal{A}$ with  $\mathrm{deg } \big( 
 (\mu_k)_{(j_1, \ldots, j_{k-1})} \big) = k-2$, by
 \begin{align*}
     (\mu_k)_{\lambda_1, \ldots, \lambda_{k-1}} (a_1, \ldots, a_k ) = \sum_{(j_1, \ldots, j_{k-1}) \geq 0} \frac{\lambda_1^{j_1} \cdots \lambda_{k-1}^{j_{k-1}}}{j_1 ! \cdots j_{k-1} !} (\mu_k)_{(j_1, \ldots, j_{k-1})} (a_1, \ldots, a_k),
 \end{align*}
 for any homogeneous elements $a_1, \ldots, a_k \in \mathcal{A}$. We call them $J$-products, which generalize the $j$-products (\ref{lambda-j}) from the non-homotopic case. Since $ (\mu_k)_{\lambda_1, \ldots, \lambda_{k-1}} (a_1, \ldots, a_k )$ lies in the polynomial space $\mathcal{A}[\lambda_1, \ldots, \lambda_{k-1}]$, it follows that $(\mu_k)_{(j_1, \ldots, j_{k-1})} (a_1, \ldots, a_k) \neq 0$ only for finitely many tuples $(j_1, \ldots, j_{k-1})$'s that depends on $\mu_k$ and the inputs $a_1, \ldots, a_k$. The conformal sesquilinearity of $\mu_k$ can be equivalently expressed in terms of the $J$-products as
\begin{align*}
    (\mu_k)_{(j_1, \ldots, j_{k-1})} (a_1, \ldots, \partial a_i, \ldots, a_k) =~& - j_1 (\mu_k)_{(j_1, \ldots, j_i -1, \ldots, j_{k-1})} (a_1, \ldots, a_k), \text{ for } 1 \leq i \leq k-1,\\
    (\mu_k)_{(j_1, \ldots, j_{k-1})} (a_1, \ldots, a_{k-1}, \partial a_k ) =~& \partial \big(    (\mu_k)_{(j_1, \ldots, j_{k-1})} (a_1, \ldots, a_k )  \big) \\
    &+ \sum_{i=1}^{k-1} j_i (\mu_k)_{(j_1, \ldots, j_i -1, \ldots, j_{k-1})} (a_1, \ldots, a_k),
\end{align*}
for any $(j_1, \ldots, j_{k-1}) \geq 0$ and $a_1, \ldots, a_k \in \mathcal{A}$. Finally, the $A_\infty$-conformal associativity conditions (\ref{a-inf-conf}) are equivalent to
\begin{align}
    \sum_{k+l = n+1} & \sum_{i=1}^k (-1)^{i (l+1) + l ( |a_1| + \cdots + |a_{i-1}|)} \sum_{\substack{ 0 \leq q_i \leq p_i \\ \vdots \\ 0 \leq q_{i+l-2} \leq p_{i+l-2}}} \binom{p_i}{q_i} \cdots \binom{p_{i+l-2}}{q_{i+l-2}} \\
   & (\mu_k)_{ (p_1, \ldots, p_{i-1}, p_i - q_i + \cdots + p_{i+l-2} - q_{i+l-2} + p_{i+l-1}, p_{i+l}, \ldots, p_{n-1})} \big(  a_1, \ldots, a_{i-1}, \nonumber \\ & \qquad \qquad \qquad \qquad  \qquad \qquad \qquad \qquad  
 (\mu_l)_{(q_i, \ldots, q_{i+l-2})} (a_i, \ldots, a_{i+l-1}), \ldots, a_{n}  \big) = 0, \nonumber
\end{align}
for any $p_1, \ldots, p_{n-1} \geq 0$, homogeneous elements $a_1, \ldots, a_n \in \mathcal{A}$ and $n \geq 1$. These relations generalize (\ref{conf-ass-j}).

\begin{remark}
Any associative conformal algebra $(A, \cdot_\lambda \cdot)$ can be viewed as an $A_{\infty}$-conformal algebra $(\mathcal{A},\{\mu_{k}\}_{k\geq 1})$, where $\mathcal{A} = A$ concentrated in degree $0$, 
\begin{align*}
(\mu_{2})_{\lambda} (a,b) = a_{\lambda}b ~~~ \text{ and } ~~~ \mu_{k} = 0, \mathrm{~for~}  k \neq 2.
\end{align*}
\end{remark}

\begin{exam}
    A {\em differential graded associative conformal algebra} is a graded $\mathbb{C}[\partial]$-module $\mathcal{A} = \oplus _{i \in \mathbb{Z}} \mathcal{A}_{i}$ equipped with a $\mathbb{C}[\partial]$-module homomorphism $ d : \mathcal{A} \rightarrow \mathcal{A}$ of degree $-1$ that satisfies $d^2 = 0$ and a degree $0$ conformal sesquilinear map
$\cdot_\lambda \cdot: {\mathcal{A}\otimes \mathcal{A} } \to {\mathcal{A}[\lambda]}$,  $a\otimes b \mapsto {a_{\lambda} b}$ that satisfies  
\begin{align*}
   a_{\lambda}(b_{\mu} c) = (a_{\lambda}b)_{\lambda+\mu }c ~~~ \text{ and } ~~~
   d(a_{\lambda}b) = (da)_{\lambda}b + a_{\lambda}(db), \mathrm{~for~ all }~ a,b,c \in \mathcal{A}.
\end{align*}
A differential graded associative  conformal algebra $(\mathcal{A}, d, \cdot_\lambda \cdot)$ is an $A_{\infty}$-conformal algebra $(\mathcal{A},\{\mu_{k}\}_{k \geq 1})$ in which $\mu_{1} = d$, $(\mu_{2})_\lambda = \cdot_\lambda \cdot$ and $\mu_{k} = 0$ for $k \geq 3$.
\end{exam}

\begin{exam}
Let $A$ be an associative conformal algebra. Then the graded $\mathbb{C}[\partial]$-module $\mathcal{A} = \underbrace{A}_{\mathrm{deg } 0} \oplus \underbrace{A}_{ \mathrm{deg } 1}$ carries an $A_\infty$-conformal algebra structure with the $\lambda$-multiplications $\{ \mu_k : \mathcal{A}^{\otimes k} \rightarrow \mathcal{A} [\lambda_1, \ldots, \lambda_{k-1}] \}_{k \geq 1}$ given by
\begin{align*}
    \mu_1 = \mathrm{id} : \mathcal{A}_1 \rightarrow \mathcal{A}_0, \quad \mu_2 : \mathcal{A}_i \otimes \mathcal{A}_j \rightarrow \mathcal{A}_{i+j}[\lambda],~ a \otimes b \mapsto a_\lambda b ~\text{ for } 0 \leq i, j, i+j \leq 1, \text{ and } \mu_k = 0 \text{ for } k \geq 3.
\end{align*}
\end{exam}

The above example can be generalized as follows. Let $A$ and $B$ be two associative conformal algebras and $f : A \rightarrow B$ be a morphism of associative conformal algebras (i.e. $f$ is a $\mathbb{C}[\partial]$-linear map satisfying $f (a_\lambda b) = f(a)_\lambda f(b)$, for $a, b \in A$). Then the graded $\mathbb{C}[\partial]$-module $\mathcal{A} = \underbrace{A}_{\mathrm{deg } 0} \oplus \underbrace{\mathrm{ker }f}_{\mathrm{deg }1}$ inherits an $A_\infty$-conformal algebra structure with the $\lambda$-multiplications $\{ \mu_k : \mathcal{A}^{\otimes k} \rightarrow \mathcal{A} [\lambda_1, \ldots, \lambda_{k-1}] \}_{k \geq 1}$ given by
\begin{align*}
    \mu_1 = i : \mathcal{A}_1 \hookrightarrow \mathcal{A}_0, \quad \mu_2 : \mathcal{A}_i \otimes \mathcal{A}_j \rightarrow \mathcal{A}_{i+j} [\lambda], ~ a \otimes b \mapsto a_\lambda b \text{ for } 0 \leq i, j, i+j \leq 1, ~\text{ and } \mu_k = 0 \text{ for } k \geq 3.
\end{align*}

\begin{exam}
    Let $A$ be an associative conformal algebra, $M$ be a conformal $A$-module and let $\varphi: A \otimes A \rightarrow M[\lambda]$ be any conformal sesquilinear map (i.e. $\varphi \in C^2 (A, M)$). Then the graded $\mathbb{C}[\partial]$-module $\mathcal{A} = \underbrace{A}_{\mathrm{deg } 0} \oplus \underbrace{M}_{\mathrm{deg } 1}$ has an $A_\infty$-conformal algebra with the maps $\{ \mu_k : \mathcal{A}^{\otimes k} \rightarrow \mathcal{A}[\lambda_1, \ldots, \lambda_{k-1}] \}_{k \geq 1}$ given by
    \begin{align*}
        &\mu_1 = 0 : \mathcal{A}_1 \rightarrow \mathcal{A}_0, \qquad \begin{cases}
            (\mu_2)_\lambda (a, b) := a_\lambda b,\\
            (\mu_2)_\lambda (a, m) := a_\lambda m,\\
            (\mu_2)_\lambda (m,a) := m_\lambda a,
        \end{cases}\\
       & (\mu_3)_{\lambda, \mu} (a, b, c) :=  a_\lambda \varphi_\mu (b, c) - \varphi_{\lambda + \mu} (a_\lambda b, c) + \varphi_\lambda (a, b_\mu c) - \varphi_\lambda (a, b)_{\lambda + \mu} c, 
    \end{align*}
    for $a, b, c \in A, m \in M$, and $\mu_k = 0$ for $k \geq 4$.
\end{exam}

The concept of $A_{\infty}$-conformal algebras are better understood when we shift the degree of the underlying graded $\mathbb{C}[\partial]$-module. Let $\mathcal{A} = \oplus_{i \in \mathbb{Z}}\mathcal{A}_{i}$ be a graded $\mathbb{C}[\partial]$-module. We consider the graded $\mathbb{C}[\partial]$-module $\mathcal{A}[-1] = \oplus_{i \in \mathbb{Z}}(\mathcal{A}[-1])_{i} $, where $\mathcal{A}[-1]_{i} = \mathcal{A}_{i-1}$. 
Note that a graded $\mathbb{C}$-linear map  $\mu_{k} : \mathcal{A}^{\otimes k } \to \mathcal{A}[\lambda_{1},\ldots,\lambda_{k-1}]$ with deg$(\mu_{k}) = k-2$ is equivalent to having a graded $\mathbb{C}$-linear map $\rho_{k} : (\mathcal{A}[-1])^{\otimes k} \to (\mathcal{A}[-1])[\lambda_{1},\ldots,\lambda_{k-1}]$ with deg$(\rho_{k}) = -1$. The correspondence is given by 
\begin{align*}
    \rho_{k} := (-1)^{\frac{k(k-1)}{2}}~s\circ \mu_{k} \circ (s^{-1})^{\otimes k},
\end{align*}
where $s: \mathcal{A} \to \mathcal{A}[-1] $ is the degree $+1$ map that identifies $\mathcal{A}$ with $\mathcal{A}[-1]$, and $s^{-1}: \mathcal{A}[-1] \to \mathcal{A}$
is the degree $-1$ map inverse to $s$. The sign $(-1)^{\frac{k(k-1)}{2}}$ is a consequence of the Koszul sign convention. See \cite{lada-markl} for more details about the Koszul sign. We are now ready to define the concept of $A_{\infty}[1]$-conformal algebras and their relation with Definition \ref{definition 3.2}.

\begin{definition}
    An {\em $A_{\infty}[1]$-conformal algebra} is a graded $\mathbb{C}[\partial]$-module $\mathcal{V} = \oplus_{i \in \mathbb{Z}} \mathcal{V}_{i}$ together with a collection $\{ \rho_{k} : \mathcal{V}^{\otimes k} \to \mathcal{V}[\lambda_{1},\ldots,\lambda_{k-1}]\}_{k \geq 1}$ of degree $-1$ graded $\mathbb{C}$-linear maps that satisfy the following conditions:
    
   - each $\rho_{k}$ is conformal sesquilinear,
   
   - for any $n \geq 1 $ and homogenous elements $v_{1},\ldots,v_{n} \in \mathcal{V}$,
   \begin{align}\label{a1-inf-conf}
     \sum_{k+l=n+1} & \sum_{i=1}^{k}  (-1)^{|v_{1}|+\cdots+|v_{i-1}|}\\&
     (\rho_{k})_{\lambda_{1},\ldots,\lambda_{i-1},\lambda_{i}+\cdots+\lambda_{i+l-1},\ldots,\lambda_{n-1}}(v_{1},\ldots,v_{i-1},(\rho_{l})_{\lambda_{i},\ldots,\lambda_{i+l-2}}(v_{i},\ldots,v_{i+l-1}),v_{i+l},\ldots,v_{n})= 0. \nonumber
   \end{align}
\end{definition}

\medskip

It is easy to see that $A_\infty[1]$-conformal algebras are easier to keep in mind than $A_\infty$-conformal algebras because of the fixed degree of the structure maps and the signs in the defining identities. However, they are equivalent up to a degree shift.

\begin{proposition}\label{proposition 3.6}
An $A_{\infty}$-conformal algebra structure on a graded $\mathbb{C}[\partial]$-module $\mathcal{A}$ is equivalent to an $A_{\infty}[1]$-conformal algebra structure on the graded $\mathbb{C}[\partial]$-module $\mathcal{A}[-1]$. More precisely, $(\mathcal{A},\{\mu_{k}\}_{k \geq 1})$ is an $A_{\infty}$-conformal algebra if and only if $(\mathcal{A}[-1], \{\rho_{k}\}_{k \geq 1})$ is an $A_{\infty}[1]$-conformal algebra.
\end{proposition}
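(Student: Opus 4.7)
The plan is to show the equivalence directly by translating the $A_\infty$-identities for $\{\mu_k\}$ into the $A_\infty[1]$-identities for $\{\rho_k\}$ via the suspension map $s \colon \mathcal{A} \to \mathcal{A}[-1]$. Since the relation $\rho_k = (-1)^{k(k-1)/2}\, s \circ \mu_k \circ (s^{-1})^{\otimes k}$ is invertible (with inverse $\mu_k = (-1)^{k(k-1)/2}\, s^{-1} \circ \rho_k \circ s^{\otimes k}$), the correspondence between collections of graded $\mathbb{C}$-linear maps $\{\mu_k\}_{k \geq 1}$ of degree $k-2$ and $\{\rho_k\}_{k \geq 1}$ of degree $-1$ is bijective. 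It therefore suffices to verify that the two sets of axioms (conformal sesquilinearity and the higher associativity relations) transport to each other under this correspondence.

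First I would dispose of the conformal sesquilinearity. Because $s$ and $s^{-1}$ are $\mathbb{C}[\partial]$-linear by construction (they only shift degree), the conformal sesquilinearity conditions of $\mu_k$ in the variables $a_1,\ldots,a_k$ translate directly into the corresponding conditions on $\rho_k$ in the variables $v_i = s(a_i)$, with no sign issues at all, since the signs of the suspension are insensitive to the action of $\partial$. The key content of the proposition is therefore the passage from identity~\eqref{a-inf-conf} to identity~\eqref{a1-inf-conf}, and I will treat these purely algebraically, fixing homogeneous inputs $v_1,\ldots,v_n \in \mathcal{A}[-1]$ with $|v_i|=|a_i|+1$.

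Next I would substitute $\mu_k = (-1)^{k(k-1)/2}\, s^{-1}\circ \rho_k \circ s^{\otimes k}$ into each term of the inner sum in \eqref{a-inf-conf}. Pulling the inner $s$'s and outer $s^{-1}$'s through each other via the Koszul sign rule produces three types of sign contributions for the $(k,l,i)$-summand: (i) the intrinsic $(-1)^{k(k-1)/2 + l(l-1)/2}$ coming from the two substitutions; (ii) a sign $(-1)^{(|v_1|+\cdots+|v_{i-1}|)}$ picked up when the degree $-1$ map $s^{-1}\circ\rho_l\circ s^{\otimes l}$ is moved past the first $i-1$ arguments $v_1,\ldots,v_{i-1}$; and (iii) a sign coming from commuting the $s$'s and $s^{-1}$'s past each other to form the composition $s\circ\mu_k\circ(s^{-1})^{\otimes k}$ on the outside, namely a contribution that depends only on $k,l,i$ and on the total parities of the arguments but which, upon summing all sources, collapses to a $(k,l,i)$-dependent global sign.

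The main obstacle is precisely the combinatorial bookkeeping in step (iii), since one must check that the $(k,l,i)$-dependent signs $(-1)^{i(l+1)}$ appearing in \eqref{a-inf-conf}, together with the $(-1)^{l(|a_1|+\cdots+|a_{i-1}|)}$ coming from the degree $l-2$ of the inner $\mu_l$ passing over $a_1,\ldots,a_{i-1}$, and the intrinsic sign $(-1)^{k(k-1)/2 + l(l-1)/2}$ from the suspension conversion, combine cleanly with the Koszul sign $(-1)^{\sum(k-i)|v_i|}$ produced by the $(s^{-1})^{\otimes k}$ factor to yield exactly $(-1)^{|v_1|+\cdots+|v_{i-1}|}$, all other signs canceling globally on the $(k,l,i)$-level (not just term by term). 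Working modulo $2$ with the identities $|v_i|\equiv |a_i|+1$ and $k+l=n+1$ forces the $a$-dependent part of the sign to match, and the identity $l|a_j|+|a_j| \equiv (l+1)|a_j| \equiv l|v_j|+|v_j|\cdot 0 \pmod 2$ together with a telescoping of the purely numerical $i,k,l$ contributions finishes the verification. Since the outer $s$ on both sides of the resulting identity can be cancelled, we recover \eqref{a1-inf-conf}, and the converse implication follows by applying the identical argument to the inverse correspondence.
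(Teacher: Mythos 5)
The paper offers no proof of Proposition \ref{proposition 3.6} at all: it is stated as a routine consequence of the correspondence $\rho_{k} = (-1)^{k(k-1)/2}\, s\circ \mu_{k}\circ (s^{-1})^{\otimes k}$ set up in the preceding paragraph. Your strategy is exactly the one the paper implicitly intends and, indeed, the only natural one: the correspondence is bijective, conformal sesquilinearity transports trivially because $\partial$ and the variables $\lambda_{i}$ carry degree zero (so they never interact with Koszul signs), and the whole content reduces to checking that the signs in \eqref{a-inf-conf} and \eqref{a1-inf-conf} match under suspension --- which is precisely the classical suspension lemma for $A_{\infty}$-algebras, unaffected by the conformal decorations.

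That said, the one step that actually constitutes the proposition --- the sign verification --- is not carried out in your write-up; it is described and then asserted to ``collapse'' and ``telescope.'' Moreover, the single explicit congruence you offer, $(l+1)|a_{j}| \equiv l|v_{j}| \pmod 2$, is false: with $|v_{j}| = |a_{j}|+1$ it reduces to $|a_{j}| \equiv l \pmod 2$, which fails for general inputs. (There is also a small slip in step (ii): the composite $s^{-1}\circ\rho_{l}\circ s^{\otimes l}$ has degree $l-2$, not $-1$; the odd map being moved past $v_{1},\dots,v_{i-1}$ is $\rho_{l}$ itself.) In the actual computation the element-dependent signs do not match term by term in the way your congruence suggests; rather, the contribution $(-1)^{l(|a_{1}|+\cdots+|a_{i-1}|)}$ from \eqref{a-inf-conf} combines with the Koszul sign $(-1)^{\sum_{j<i}(k-j)|v_{j}|+\cdots}$ produced by $(s^{-1})^{\otimes k}$ and with the sign from commuting $\rho_{l}$ past $v_{1},\dots,v_{i-1}$, and only the total reduces to $(-1)^{|v_{1}|+\cdots+|v_{i-1}|}$ after also absorbing the purely numerical part into $(-1)^{i(l+1)+k(k-1)/2+l(l-1)/2}$. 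To make this a proof you should either write out that computation for a general $(k,l,i)$-summand, or observe explicitly that, since the $\lambda$'s are degree-zero, the statement is word-for-word the non-conformal suspension lemma and reproduce (or cite) that argument. As it stands, the only nontrivial claim is supported by an incorrect identity, so the verification is not yet complete.
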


   

\medskip

  
\noindent {\bf Maurer-Cartan characterization and cohomology of $A_\infty$-conformal algebras.} In the following, we show that $A_{\infty}$-conformal algebra structures on a graded $\mathbb{C}[\partial]$-module can be equivalently described by Maurer-Cartan elements in a suitable graded Lie algebra. Let $\mathcal{V} = \oplus_{i \in \mathbb{Z}} \mathcal{V}_{i}$ be a graded $\mathbb{C}[\partial]$-module. For each $k\geq 1$, let $\mathrm{Hom}^{n}_{cs}(\mathcal{V}^{\otimes k}, \mathcal{V}[\lambda_{1},\ldots, \lambda_{k-1}])$ be the set of all degree $n$ conformal sesquilinear maps from $\mathcal{V}^{\otimes k}$ to $\mathcal{V}[\lambda_{1},\ldots, \lambda_{k-1}]$. That is,
\begin{align*}
   \mathrm{Hom}^{n}_{cs}(\mathcal{V}^{\otimes k}, \mathcal{V}[\lambda_{1},\ldots, \lambda_{k-1}]) = \{\varphi \in \mathrm{Hom}_{cs}(\mathcal{V}^{\otimes k}, \mathcal{V}[\lambda_{1},\ldots, \lambda_{k-1}])~ | \mathrm{~deg}(\varphi) = n\}.
\end{align*}
We define a graded vector space $C^{\bullet}_{cs}(\mathcal{V}) = \oplus_{n \in \mathbb{Z}} C^{n}_{cs}(\mathcal{V})$, where 
\begin{align*}
    C^{n}_{cs}(\mathcal{V})= \oplus_{k \geq1}\mathrm{Hom}^{n}_{cs}(\mathcal{V}^{\otimes k}, \mathcal{V}[\lambda_{1},\ldots, \lambda_{k-1}]).
\end{align*}
Thus, an element $\rho \in C^{n}_{cs}(\mathcal{V}) $ is given by a sum $\rho = \sum_{k\geq1}^{}\rho_{k}$, where $\rho_{k}\in \mathrm{Hom}^{n}_{cs}(\mathcal{V}^{\otimes k}, \mathcal{V}[\lambda_{1},\ldots, \lambda_{k-1}])$ for any $k \geq 1$. Note that the graded vector space $C^{\bullet}_{cs}(\mathcal{V}) = \oplus_{n \in \mathbb{Z}} C^{n}_{cs}(\mathcal{V})$ inherits a degree $0$ bracket given by
\begin{align*}
    \llbracket \sum_{k\geq1}^{}\rho_{k},\sum_{l\geq1}^{}\varrho_{l} \rrbracket = \sum_{p \geq 1}\sum_{k+l = p+1}(\rho_{k}\diamond \varrho_{l}-(-1)^{mn}\varrho_{l} \diamond \rho_{k}),
\end{align*}
where
\begin{align}\label{diam-product}
    (\rho_{k}\diamond \varrho_{l}&)_{\lambda_{1},\ldots,\lambda_{p-1}}(v_{1},\ldots,v_{p}) =\sum_{i=1}^{k}(-1)^{m(|v_{1}|+\cdots+|v_{i-1}|)}\\&
    (\rho_{k})_{\lambda_{1},\ldots,\lambda_{i-1},\lambda_{i}+\cdots+\lambda_{i+l-1},\ldots,\lambda_{p-1}}(v_{1},\ldots,v_{i-1},(\varrho_{l})_{\lambda_{i},\ldots,\lambda_{i+l-2}}(v_{i},\ldots,v_{i+l-1}),v_{i+1},\ldots,v_{p}), \nonumber
\end{align}
    for $\rho = \sum_{k \geq 1}\rho_{k}\in C^{n}_{cs}(\mathcal{V}) $ and $\varrho = \sum_{l \geq 1} \varrho_{l}\in C^{m}_{cs}(\mathcal{V}) $. Then we have the following.
    

    \begin{proposition}
        With the above notations, $(C^\bullet_{cs} (\mathcal{V}), \llbracket ~, ~ \rrbracket)$ is a graded Lie algebra.
    \end{proposition}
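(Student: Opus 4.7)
My plan is to show that the composition-like product $\diamond$ defined in \eqref{diam-product} makes $C^\bullet_{cs}(\mathcal{V})$ into a graded right pre-Lie algebra, because the graded commutator of any graded pre-Lie product is automatically a graded Lie bracket. Graded skew-symmetry of $\llbracket ~, ~ \rrbracket$ is built into its definition as such a commutator, so the substantive content is the graded Jacobi identity, which is equivalent to the right pre-Lie relation
\begin{align*}
(\rho \diamond \varrho) \diamond \sigma - \rho \diamond (\varrho \diamond \sigma) = (-1)^{m r}\bigl( (\rho \diamond \sigma) \diamond \varrho - \rho \diamond (\sigma \diamond \varrho) \bigr)
\end{align*}
for homogeneous $\rho, \varrho, \sigma \in C^\bullet_{cs}(\mathcal{V})$ of degrees $n, m, r$ respectively.

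First, I would unpack $(\rho_k \diamond \varrho_l) \diamond \sigma_s$ by applying \eqref{diam-product} twice. The resulting double sum is indexed by pairs of insertion positions $(i,j)$ and splits into two disjoint cases: the \emph{nested} case, in which $\sigma_s$ is inserted into one of the argument slots of $\varrho_l$, and the \emph{non-nested} case, in which the two insertions occur at disjoint positions. Second, I would verify that the nested contribution to $(\rho \diamond \varrho) \diamond \sigma$ matches $\rho \diamond (\varrho \diamond \sigma)$ term by term, so that the associator reduces to the non-nested contribution. Here the $\lambda$-parameters combine correctly essentially because the formula \eqref{diam-product} forces each slot that receives an insertion to absorb the sum of the formal parameters attached to the arguments of that insertion. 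Third, I would show that the non-nested contribution is symmetric in the pair $(\varrho, \sigma)$ up to the Koszul sign $(-1)^{mr}$ that arises from interchanging two homogeneous elements of degrees $m$ and $r$; this symmetry is precisely what the right pre-Lie identity asserts.

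I would also need to verify the prerequisite that $\diamond$ preserves conformal sesquilinearity, i.e.\ that $\rho \diamond \varrho$ again lies in $C^\bullet_{cs}(\mathcal{V})$, but this is immediate from the conformal sesquilinearity of each factor: substituting the output $(\varrho_l)_{\lambda_i, \ldots, \lambda_{i+l-2}}(v_i, \ldots, v_{i+l-1})$ into a slot of $\rho_k$ produces the correct behavior under the $\partial$-action on any $v_j$ because $\rho_k$ is sesquilinear in that slot, and the output itself is sesquilinear in each $v_i, \ldots, v_{i+l-1}$. The main obstacle will be the sign bookkeeping in the second and third steps: one must simultaneously track the Koszul prefactors $(-1)^{m(|v_1|+\cdots+|v_{i-1}|)}$ and $(-1)^{r(|v_1|+\cdots+|v_{j-1}|)}$ coming from \eqref{diam-product}, the positional shifts of the remaining arguments after the first insertion displaces them, and the relabeling of the $\lambda$-parameters into their new slots. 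Once these signs are matched with the classical Gerstenhaber calculation for $A_\infty$-algebras, the conformal case follows verbatim, the only new ingredient being the polynomial-valued nature of the maps, which plays no role in the underlying combinatorial identity.
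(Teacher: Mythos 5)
Your proposal is correct and follows essentially the same route as the paper: the paper also establishes the graded Jacobi identity by decomposing $\diamond$ into partial compositions $\diamond_i$ and verifying the two operad-like identities (nested compositions agree; disjoint compositions commute), which is exactly your nested/non-nested splitting of the associator showing $\diamond$ is a graded right pre-Lie product. Your additional check that $\diamond$ preserves conformal sesquilinearity is a point the paper passes over silently, but the argument is the same.
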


    \begin{proof}
        For any $\rho_k \in \mathrm{Hom}^n_{cs} (\mathcal{V}^{\otimes k} , \mathcal{V} [\lambda_1, \ldots, \lambda_{k-1}])$, $\varrho_l \in \mathrm{Hom}^m_{cs} (\mathcal{V}^{\otimes l} , \mathcal{V} [\lambda_1, \ldots, \lambda_{l-1}])$ and $1 \leq i \leq k$, we define an element $\rho_k \diamond_i \varrho_l \in \mathrm{Hom}^{n+m}_{cs} (\mathcal{V}^{\otimes (k+l-1)} , \mathcal{V} [\lambda_1, \ldots, \lambda_{k+l-2}])$ by
        \begin{align*}
    (\rho_{k}\diamond_i \varrho_{l}&)_{\lambda_{1},\ldots,\lambda_{k+l-2}}(v_{1},\ldots,v_{k+l-1}) = (-1)^{m(|v_{1}|+\cdots+|v_{i-1}|)}\\&
    (\rho_{k})_{\lambda_{1},\ldots,\lambda_{i-1},\lambda_{i}+\cdots+\lambda_{i+l-1},\ldots,\lambda_{k+l-2}}(v_{1},\ldots,v_{i-1},(\varrho_{l})_{\lambda_{i},\ldots,\lambda_{i+l-2}} (v_{i},\ldots,v_{i+l-1}), v_{i+1},\ldots,v_{k+l-1}), \nonumber
\end{align*}
Then we have
\begin{align}\label{sk-br}
    \llbracket \rho_k , \varrho_l \rrbracket = \rho_k \diamond \varrho_l - (-1)^{mn} \varrho_l \diamond \rho_k = \sum_{i=1}^k \rho_k \diamond_i \varrho_l - (-1)^{mn} \sum_{i=1}^l \varrho_l \diamond_i \rho_k.
\end{align}
Moreover, for any elements $\rho_k \in \mathrm{Hom}^n_{cs} (\mathcal{V}^{\otimes k} , \mathcal{V} [\lambda_1, \ldots, \lambda_{k-1}])$, $\varrho_l \in \mathrm{Hom}^m_{cs} (\mathcal{V}^{\otimes l} , \mathcal{V} [\lambda_1, \ldots, \lambda_{l-1}])$ and $\psi_s \in \mathrm{Hom}^p_{cs} (\mathcal{V}^{\otimes s}, \mathcal{V} [\lambda_1, \ldots, \lambda_{s-1}])$, it is easy to verify that the $\diamond_i$ operations satisfy the following operad-like identities (see also \cite{hou} for the non-graded case)
\begin{align*}
    (\rho_k \diamond_i \varrho_l) \diamond_{i+j-1} \psi_s =~& \rho_k \diamond_i (\varrho_l \diamond_j \psi_s), \text{ for } 1 \leq i \leq k, ~ 1 \leq j \leq l, \\
    (\rho_k \diamond_i \varrho_l) \diamond_{j+l-1} \psi_s =~& (\rho_k \diamond_j \varrho_l) \diamond_i \psi_s, \text{ for } 1 \leq i < j \leq k.
\end{align*}
As a consequence, the graded skew-symmetric bracket (\ref{sk-br}) satisfies the graded Jacobi identity:
\begin{align*}
    \llbracket \rho_k, \llbracket \varrho_l, \psi_s \rrbracket \rrbracket = \llbracket \llbracket \rho_k, \varrho_l \rrbracket, \psi_s \rrbracket + (-1)^{mn} \llbracket \varrho_l, \llbracket \rho_k, \psi_s \rrbracket \rrbracket.
\end{align*}
Finally, if $\rho = \sum_{k \geq 1} \rho_k \in C^n_{cs} (\mathcal{V})$, $\varrho = \sum_{l \geq 1} \varrho_l \in C^m_{cs} (\mathcal{V})$ and $\psi = \sum_{s \geq 1} \psi_s \in C^p_{cs} (\mathcal{V})$, then we have
\begin{align*}
    \llbracket \rho, \llbracket \varrho, \psi \rrbracket \rrbracket =~& \sum_{r \geq 1} \sum_{k+l+s = r+2} \llbracket \rho_k, \llbracket \varrho_l, \psi_s \rrbracket \rrbracket \\
    =~&  \sum_{r \geq 1} \sum_{k+l+s = r+2} \big( \llbracket \llbracket \rho_k, \varrho_l \rrbracket, \psi_s \rrbracket + (-1)^{mn} \llbracket \varrho_l, \llbracket \rho_k, \psi_s \rrbracket \rrbracket  \big) \\
    =~& \llbracket \llbracket \rho, \varrho \rrbracket, \psi \rrbracket + (-1)^{mn} \llbracket \varrho, \llbracket \rho, \psi \rrbracket \rrbracket,
\end{align*}
which verifies the graded Jacobi identity for general elements.
    \end{proof}
 
    \begin{thm}\label{Theorem 1}
    Let $\mathcal{V} = \oplus_{i \in \mathbb{Z}} \mathcal{V}_{i}$ be a graded $\mathbb{C}[\partial]$-module. A collection of degree $-1$ conformal sesquilinear maps $\{ \rho_{k} : \mathcal{V}^{\otimes k} \to \mathcal{V}[\lambda_{1},\ldots,\lambda_{k-1}]\}_{k \geq 1}$ makes the pair $(\mathcal{V}, \{\rho_{k}\}_{k \geq 1})$ into an $A_{\infty}[1]$-conformal algebra if and only if the element $\rho = \sum_{k \geq 1}\rho_{k} \in C_{cs}^{-1}(\mathcal{V})$ is a Maurer-Cartan element in the graded Lie algebra $(C^{\bullet}_{cs}(\mathcal{V}), \llbracket ~,~ \rrbracket)$.
\end{thm}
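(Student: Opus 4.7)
The plan is to unwind the Maurer-Cartan equation $\llbracket \rho, \rho \rrbracket = 0$ componentwise and match it term by term with the defining identities (\ref{a1-inf-conf}). Write $\rho = \sum_{k \geq 1} \rho_k$ with each $\rho_k \in \mathrm{Hom}^{-1}_{cs}(\mathcal{V}^{\otimes k}, \mathcal{V}[\lambda_1, \ldots, \lambda_{k-1}])$, so that the total degree is $|\rho| = -1$. Applying the formula for the bracket with $m = n = -1$, the sign $(-1)^{mn} = (-1)^{(-1)(-1)} = -1$, so
\begin{align*}
\llbracket \rho, \rho \rrbracket \;=\; \sum_{p \geq 1} \sum_{k+l=p+1} \bigl( \rho_k \diamond \rho_l + \rho_l \diamond \rho_k \bigr) \;=\; 2 \sum_{p \geq 1} \sum_{k+l=p+1} \rho_k \diamond \rho_l,
\end{align*}
where the second equality comes from re-indexing the right-hand summand by swapping $k \leftrightarrow l$. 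Since $\rho_k \diamond \rho_l$ lies in the $(k+l-1)$-ary component, the Maurer-Cartan equation is equivalent to the separate vanishing
\begin{align*}
\sum_{k+l=n+1} \rho_k \diamond \rho_l \;=\; 0 \qquad \text{for every } n \geq 1.
\end{align*}

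Next, I would evaluate each $\rho_k \diamond \rho_l$ on homogeneous inputs $(v_1, \ldots, v_n)$ via the definition (\ref{diam-product}). With $m = -1$, the sign $(-1)^{m(|v_1|+\cdots+|v_{i-1}|)}$ simplifies to $(-1)^{|v_1|+\cdots+|v_{i-1}|}$, which is precisely the sign in (\ref{a1-inf-conf}). A direct comparison of the remaining data — the outer decomposition $k+l=n+1$, the inner sum over insertion positions $i = 1, \ldots, k$, the spectral-parameter merging $\lambda_i + \cdots + \lambda_{i+l-1}$ in the $i$-th slot of $\rho_k$, and the argument list $(v_1, \ldots, v_{i-1}, (\rho_l)_{\lambda_i, \ldots, \lambda_{i+l-2}}(v_i, \ldots, v_{i+l-1}), v_{i+l}, \ldots, v_n)$ — shows that the $n$-ary component of $\llbracket \rho, \rho \rrbracket$ evaluated at $(v_1, \ldots, v_n)$ is twice the left-hand side of (\ref{a1-inf-conf}). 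Hence its vanishing on all homogeneous inputs is exactly the $A_\infty[1]$-conformal associativity in arity $n$, and the equivalence follows by running $n$ over all positive integers.

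There is no genuine obstacle in this argument; the verification is sign- and index-bookkeeping. The one point that deserves care is checking that the factor $(-1)^{mn} = -1$ in the bracket causes the two halves $\rho_k \diamond \rho_l$ and $\rho_l \diamond \rho_k$ to add rather than cancel, so that $\llbracket \rho, \rho \rrbracket$ imposes a non-trivial condition despite $\rho$ being of odd degree. Once this and the parity simplification in (\ref{diam-product}) are confirmed, the matching with (\ref{a1-inf-conf}) is term-by-term, and the theorem follows.
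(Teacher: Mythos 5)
Your proposal is correct and follows exactly the paper's own argument: compute $\llbracket \rho,\rho\rrbracket = 2\sum_{n\geq 1}\sum_{k+l=n+1}\rho_k\diamond\rho_l$ using $(-1)^{mn}=-1$ for the odd-degree element, reduce the Maurer--Cartan equation to the arity-wise vanishing $\sum_{k+l=n+1}\rho_k\diamond\rho_l=0$, and match this with (\ref{a1-inf-conf}) via the definition (\ref{diam-product}). The only difference is that you spell out the sign bookkeeping (the $(-1)^{m(|v_1|+\cdots+|v_{i-1}|)}$ reduction and the non-cancellation of the two halves of the bracket) which the paper leaves implicit.
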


\begin{proof}
We observe that
\begin{align*}
    \llbracket \rho, \rho \rrbracket = \llbracket \sum_{k \geq 1} \rho_k , \sum_{l \geq 1} \rho_l \rrbracket =~& \sum_{n \geq 1} \sum_{k+l = n+1} (\rho_k \diamond \rho_l - (-)^1 \rho_l \diamond \rho_k ) \\
    =~& \sum_{n \geq 1} \big(  2 \sum_{k+l = n+1} \rho_k \diamond \rho_l  \big).
\end{align*}
This shows that $\rho = \sum_{k \geq 1} \rho_k$ is a Maurer-Cartan element in the graded Lie algebra $(C^\bullet_{cs} (\mathcal{V}), \llbracket ~, ~ \rrbracket)$ if and only if $\sum_{k+l = n+1} \rho_k \diamond \rho_l = 0$ for all $n \geq 1$. Hence the result follows by using (\ref{diam-product}).
\end{proof}

 Thus, by combining Proposition \ref{proposition 3.6} and Theorem \ref{Theorem 1}, we conclude that the following statements are equivalent:
 \begin{enumerate}
     \item $(\mathcal{A}, \{ \mu_{k}\}_{k \geq 1})$ is an $A_{\infty}$-conformal algebra,
     \item $(\mathcal{A}[-1],\{\rho_{k}\}_{k \geq 1})$ is an $A_{\infty}[1]$-conformal algebra,
     \item $\rho = \sum_{k \geq 1} \rho_{k}$ is a Maurer-Cartan element in the graded Lie algebra $(C^{\bullet}_{cs}(\mathcal{A}[-1]),[\![~,~]\!])$.
 \end{enumerate}

\medskip

Let $(\mathcal{A}, \{ \mu_k \}_{k \geq 1})$ be an $A_\infty$-conformal algebra. Consider the corresponding Maurer-Cartan element $\rho = \sum_{k \geq 1} \rho_k$ in the graded Lie algebra $(C^\bullet_{cs} (\mathcal{A}[-1]), \llbracket ~, ~ \rrbracket)$. For each $n \in \mathbb{Z}$, we define the space of $n$-cochains by $C^n (\mathcal{A}, \mathcal{A}) := C^{- (n-1)}_{cs} (\mathcal{A}[-1])$. Thus, an element $\varphi \in C^n (\mathcal{A}, \mathcal{A})$ is given by a sum $\varphi = \sum_{l \geq 1} \varphi_l$, where $\varphi_l \in \mathrm{Hom}_{cs}^{- (n-1)} ( \mathcal{A}[-1]^{\otimes l} , \mathcal{A}[-1][\lambda_1, \ldots, \lambda_{l-1}])$ for any $l \geq 1$. We define a map $\delta_\rho : C^n (\mathcal{A}, \mathcal{A}) \rightarrow C^{n+1} (\mathcal{A}, \mathcal{A}) $ by
\begin{align*}
    \delta_\rho (\varphi ) := (-1)^{n-1} \llbracket \rho , \varphi \rrbracket = (-1)^{n-1} \sum_{p \geq 1} \sum_{k+l = p+1} \big( \rho_k \diamond \varphi_l - (-1)^{n-1} \varphi_l \diamond \rho_k  \big), \text{ for } \varphi = \sum_{l \geq 1} \varphi_l \in C^n (\mathcal{A}, \mathcal{A}). 
\end{align*}
Since $\rho$ is a Maurer-Cartan element (i.e. $\llbracket \rho, \rho \rrbracket = 0$), it follows that $(\delta_\rho)^2 = 0$. Hence $\{ C^\bullet (\mathcal{A}, \mathcal{A}), \delta_\rho \}$ is a cochain complex. The corresponding cohomology groups are called the {\em cohomology} of the $A_\infty$-conformal algebra $(\mathcal{A}, \{ \mu_k \}_{k \geq 1})$.

In the following, we introduce representations of an $A_{\infty}$-conformal algebra and define cohomology with coefficients in a representation. We first need the following notation. Let $\mathcal{A}$ and $\mathcal{M}$ be two graded $\mathbb{C}[\partial]$-modules. For any $k \geq 1$, let $\mathcal{A}^{k-1,1}$ be the direct sum of all possible tensor powers of $\mathcal{A}$ and $\mathcal{M}$ in which $\mathcal{A}$ appears $k-1$ times and $\mathcal{M}$ appears exactly once. For instance, 
\begin{align*}
    \mathcal{A}^{1,1} =(\mathcal{A} \otimes\mathcal{M})\oplus(\mathcal{M}\otimes\mathcal{A}) \quad  \mathrm{and} \quad
   \mathcal{A}^{2,1} = (\mathcal{A}\otimes \mathcal{A}\otimes\mathcal{M})\oplus(\mathcal{A}\otimes \mathcal{M}\otimes\mathcal{A})\oplus(\mathcal{M}\otimes \mathcal{A}\otimes\mathcal{A}).
\end{align*}

\begin{definition}
    Let $(\mathcal{A},\{\mu_{k}\}_{k \geq 1})$ be an $A_{\infty}$-conformal algebra. A {\em representation} of $(\mathcal{A},\{\mu_{k}\}_{k \geq 1})$ is a graded $\mathbb{C}[\partial]$-module $\mathcal{M} = \oplus_{i \in \mathbb{Z}} \mathcal{M}_{i}$ equipped with a collection $\{\eta_{k} : \mathcal{A}^{k-1,1} \to \mathcal{M}[\lambda_{1},\ldots,\lambda_{k-1}]\}_{k \geq 1}$ of graded conformal sesquilinear maps with $\mathrm{deg}(\eta_{k}) = k-2$ for any $k \geq 1$, such that the identities in (\ref{a-inf-conf}) are hold when exactly one of $a_{1},\ldots , a_{n}$ is from $\mathcal{M}$ and the corresponding $\mu_{k}$ or $\mu_{l}$ is replaced by $\eta_{k}$ or $\eta_{l}$.
\end{definition}

It follows from the above definition that any $A_{\infty}$-conformal algebra $(\mathcal{A},\{\mu_{k}\}_{k \geq 1})$ is a representation of itself, where $\eta_k = \mu_k$ for any $k \geq 1$. This is called the {\em adjoint representation}.

Let $(\mathcal{A}, \{ \mu_k \}_{k \geq 1})$ be an $A_\infty$-conformal algebra and $(\mathcal{M}, \{ \eta_k \}_{k \geq 1})$ be a representation of it. Then it can be checked that the direct sum graded $\mathbb{C}[\partial]$-module $\mathcal{A} \oplus \mathcal{M}$ inherits an $A_\infty$-conformal algebra with the structure maps $\{ \theta_k : (\mathcal{A} \oplus \mathcal{M})^{\otimes k} \rightarrow (\mathcal{A} \oplus \mathcal{M}) [\lambda_1, \ldots, \lambda_{k-1}]  \}_{k \geq 1}$ given by
\begin{align*}
    (\theta_k)_{\lambda_1, \ldots, \lambda_{k-1}} & \big(  (a_1,m_1), \ldots, (a_k, m_k) \big) :=\\ & \big(  (\mu_k)_{\lambda_1, \ldots, \lambda_{k-1}} (a_1, \ldots, a_k), ~ \sum_{i=1}^k (\eta_k)_{\lambda_1, \ldots, \lambda_{k-1}} (a_1, \ldots, a_{i-1}, m_i, a_{i+1}, \ldots, a_k) \big),
\end{align*}
for $(a_1, m_1), \ldots, (a_k, m_k) \in \mathcal{A} \oplus \mathcal{M}$. This is called the {\em semidirect product} $A_\infty$-conformal algebra, often denoted by $\mathcal{A} \ltimes \mathcal{M}$. Let $\Delta \in C^{-1}_{cs} ((\mathcal{A} \oplus \mathcal{M})[-1])$ be the Maurer-Cartan element in the graded Lie algebra $\big(  C^\bullet_{cs} ((\mathcal{A} \oplus \mathcal{M})[-1]) , \llbracket ~, ~ \rrbracket \big)$ corresponding to the semidirect product $A_\infty$-conformal algebra $(\mathcal{A} \oplus \mathcal{M}, \{ \theta_k \}_{k \geq 1})$. Explicitly, $\Delta$ is given by
\begin{align*}
    \Delta = \sum_{k \geq 1} \Delta_k, ~ \text{ where } \Delta_k = (-1)^{\frac{k(k-1)}{2}} ~ s \circ \theta_k \circ (s^{-1})^{\otimes k} \text{ for any } k \geq 1.
\end{align*}
The element $\Delta$ induces the cochain complex $\{ C^\bullet (\mathcal{A} \oplus \mathcal{M}, \mathcal{A} \oplus \mathcal{M}), \delta_\Delta \}$ defining the cohomology of the semidirect product $A_\infty$-conformal algebra. For any $n \in \mathbb{Z}$, let $C^n (\mathcal{A}, \mathcal{M}) \subset C^n (\mathcal{A} \oplus \mathcal{M}, \mathcal{A} \oplus \mathcal{M})$ be the subspace given by
\begin{align*}
    C^n (\mathcal{A}, \mathcal{M}) := \{ \varphi = \sum_{l \geq 1} \varphi_l \in C^n (\mathcal{A} \oplus \mathcal{M}, \mathcal{A} \oplus \mathcal{M}) ~|~ (\varphi_l)|_{\mathcal{A}[-1]^{\otimes l}} \subset \mathcal{M}[-1][\lambda_1, \ldots, \lambda_{l-1}] \text{ for any } l \geq 1 \}. 
\end{align*}
Note that, an element $\varphi \in C^n (\mathcal{A}, \mathcal{M})$ can be realized by a sum $\varphi = \sum_{l \geq 1} \varphi_l$, where 
\begin{align*}
\varphi_l \in \mathrm{Hom}_{cs}^{- (n-1)} ( \mathcal{A}[-1]^{\otimes l} , \mathcal{M}[-1][\lambda_1, \ldots, \lambda_{l-1}]), \text{ for any } l \geq 1.
\end{align*}
Then it follows from the explicit description of $\delta_\Delta$ that $\delta_\Delta \big(  C^n (\mathcal{A}, \mathcal{M}) \big) \subset C^{n+1} (\mathcal{A}, \mathcal{M})$ for all $n$. Hence $\delta_\Delta$ restricts to a differential (denoted by $\delta_\rho$ if no confusion arises) $\delta_\rho : C^n (\mathcal{A}, \mathcal{M}) \rightarrow  C^{n+1} (\mathcal{A}, \mathcal{M})$. In other words, $\{ C^\bullet (\mathcal{A}, \mathcal{M}), \delta_\rho \}$ is a cochain complex. The corresponding cohomology groups are called the {\em cohomology} of the $A_\infty$-conformal algebra $(\mathcal{A}, \{ \mu_k \}_{k \geq 1})$ with coefficients in the representation $(\mathcal{M}, \{ \eta_k \}_{k \geq 1})$.

\begin{remark}
    When $(\mathcal{M}, \{ \eta_k \}_{k \geq 1}) = (\mathcal{A}, \{ \mu_k \}_{k \geq 1})$ is the adjoint representation, the above cohomology coincides with the cohomology of the $A_\infty$-conformal algebra $(\mathcal{A}, \{ \mu_k \}_{k \geq 1})$ introduced earlier.
\end{remark}

\medskip

\noindent{\bf Homotopy transfer theorems.} Here we will prove the homotopy transfer theorems for $A_\infty$-conformal algebras. As we have observed earlier that $A_\infty[1]$-conformal algebras are easier to work, we will first prove the homotopy transfer theorems for $A_\infty [1]$-conformal algebras. Motivation for our results comes from the following equivalent description of $A_\infty [1]$-conformal algebras.

 Let $(\mathcal{V} = \oplus_{i \in \mathbb{Z}} \mathcal{V}^i, \rho_1)$ be a chain complex of $\mathbb{C}[\partial]$-modules. For any $k \geq 2$, we define a map $\partial_{\rho_1} : \mathrm{Hom}_{cs}^p (  \mathcal{V}^{\otimes k} , \mathcal{V}[\lambda_1, \ldots, \lambda_{k-1}]) \rightarrow  \mathrm{Hom}_{cs}^{p-1} (  \mathcal{V}^{\otimes k} , \mathcal{V}[\lambda_1, \ldots, \lambda_{k-1}])$ by
    \begin{align*}
        \partial_{\rho_1} (\rho_k) =  \llbracket \rho_1, \rho_k \rrbracket = \rho_1 \diamond \rho_k - (-1)^p \rho_k \diamond \rho_1 = \rho_1 \diamond_1 \rho_k  - (-1)^p   \sum_{i=1}^k \rho_k \diamond_i \rho_1,
    \end{align*}
for $ \rho_k \in \mathrm{Hom}_{cs}^{p} (  \mathcal{V}^{\otimes k} , \mathcal{V}[\lambda_1, \ldots, \lambda_{k-1}]).$
With the above notations, the identities (\ref{a1-inf-conf}) for $n \geq 2$ can be equivalently written as
\begin{align}\label{par-a1}
    \partial_{\rho_1} (\rho_n) = \begin{cases}
        0 & \text{ if } n \geq 2,\\
        - \sum_{ \substack{k+l = n+1 \\ k, l >1}} \rho_k \diamond \rho_l =  - \sum_{  \substack{k+l = n+1 \\ k, l >1} } \sum_{i=1}^k \rho_k \diamond_i \rho_l & \text{ if } n \geq 3.
    \end{cases}
\end{align}
Thus, an $A_\infty [1]$-conformal algebra can be realized as a chain complex $(\mathcal{V} = \oplus_{i \in \mathbb{Z}} \mathcal{V}^i, \rho_1)$ of $\mathbb{C}[\partial]$-modules equipped with a collection of degree $-1$ conformal sesquilinear maps $\{ \rho_k :  \mathcal{V}^{\otimes k} \rightarrow \mathcal{V}[\lambda_1, \ldots, \lambda_{k-1}] \}_{k \geq 2}$ that satisfy the identities (\ref{par-a1}).

Let $(\mathcal{V} = \oplus_{i \in \mathbb{Z}} \mathcal{V}_i, \rho_1)$ and $(\mathcal{W} = \oplus_{i \in \mathbb{Z}} \mathcal{W}_i, \theta_1)$ be two chain complexes of $\mathbb{C}[\partial]$-modules. We say that the complex $(\mathcal{W} , \theta_1)$ is a {\em deformation retract} of $(\mathcal{V}, \rho_1)$ if there exist chain maps $p : (\mathcal{V}, \rho_1) \rightarrow (\mathcal{W}, \theta_1)$ and $i: (\mathcal{W}, \theta_1) \rightarrow  (\mathcal{V}, \rho_1)$ between complexes of $\mathbb{C}[\partial]$-modules and a degree $+1$ chain homotopy $h : \mathcal{V} \rightarrow \mathcal{V}$,
\begin{align}\label{contraction-data}
\xymatrix{
	  (\mathcal{V} , \rho_1)  \ar@(ul,ur)^h \ar@<2pt>[r]^{p} & ( \mathcal{W}, \theta_1) \ar@<2pt>[l]^{i} 
}
\end{align}
satisfying 
\begin{align*}
\mathrm{id}_\mathcal{V} - ip = \rho_1 h + h \rho_1 ~\text{  and }~ pi = \mathrm{id}_\mathcal{W}.
\end{align*}
In this case, the full data described in (\ref{contraction-data}) is often called contraction data.

Next, suppose that the complex $(\mathcal{V}, \rho_1)$ carries a degree $-1$ $\lambda$-multiplication $\rho_2 : \mathcal{V} \otimes \mathcal{V} \rightarrow \mathcal{V}[\lambda]$ which makes $(\mathcal{V}, \rho_1, \rho_2)$ into an $A_\infty[1]$-conformal algebra (here $\rho_k = 0$ for $k \geq 3$). In other words, $\rho_2$ makes the triple $(\mathcal{V}, \rho_1, \rho_2)$ into a shifted differential graded associative conformal algebra. We now define a degree $-1$ conformal sesquilinear map $\theta_2 : \mathcal{W} \otimes \mathcal{W} \rightarrow \mathcal{W}[\lambda ]$ by
\begin{align*}
    (\theta_2)_\lambda (x, y) := p  \big(  (\rho_2)_\lambda (i(x), i(y)) \big), \text{ for } x, y \in \mathcal{W}.
\end{align*}

Then we have the following straightforward observation.
\begin{lemma}
The map $\theta_2 \in \mathrm{Hom}_{cs}^{-1} (\mathcal{W}^{\otimes 2}, \mathcal{W}[\lambda])$ satisfies $\partial_{\theta_1} (\theta_2) = 0$. Moreover, for any $x, y, z \in \mathcal{W}$, we have
\begin{align}\label{obs}
    (\theta_2)_{\lambda + \mu} \big(  (\theta_2)_\lambda (x, y), z  \big) + (-1)^{|x|} (\theta_2)_\lambda \big( x, (\theta_2)_\mu (y, z)  \big) = - (\partial_{\theta_1} (\theta_3))_{\lambda, \mu} (x, y, z),
\end{align}
where $\theta_3 : \mathcal{W}^{\otimes 3} \rightarrow \mathcal{W} [\lambda, \mu]$ is the degree $-1$ conformal sesquilinear map given by
\begin{align*}
    (\theta_3)_{\lambda, \mu} (x, y, z) = p (\mu_2)_{\lambda + \mu}  \big(  h (\mu_2)_\lambda (i(x), i(y)), i(z)  \big) + (-1)^{|x|} p (\mu_2)_\lambda \big( i(x), h (\mu_2)_\lambda (i(y), i(z)) \big).
\end{align*}
\end{lemma}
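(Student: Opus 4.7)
The plan is to establish the two claims of the lemma in turn. For the assertion that $\theta_{2} \in \mathrm{Hom}^{-1}_{cs}(\mathcal{W}^{\otimes 2}, \mathcal{W}[\lambda])$, I will verify degree and conformal sesquilinearity directly from the definition: the composition $p \circ (\rho_2)_\lambda \circ (i \otimes i)$ inherits degree $-1$ and both conformal sesquilinearity conditions from $(\rho_2)_\lambda$, using the $\mathbb{C}[\partial]$-linearity of $i$ and $p$. For the cocycle identity $\partial_{\theta_{1}}(\theta_{2}) = 0$, I will unfold the definition to
\begin{align*}
(\partial_{\theta_{1}}(\theta_{2}))_{\lambda}(x,y) = \theta_{1}p(\rho_{2})_{\lambda}(i(x),i(y)) + p(\rho_{2})_{\lambda}(i\theta_{1}(x), i(y)) + (-1)^{|x|}p(\rho_{2})_{\lambda}(i(x), i\theta_{1}(y)),
\end{align*}
then use the chain-map relations $\theta_{1}p = p\rho_{1}$ and $\rho_{1}i = i\theta_{1}$ to pull everything into $p \circ (\ldots)$ and invoke the $n = 2$ identity $\partial_{\rho_{1}}(\rho_{2}) = 0$ of the $A_\infty[1]$-conformal algebra $\mathcal{V}$.

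For the homotopy associativity identity $(\ref{obs})$, the plan is a standard transfer argument: substitute $ip = \mathrm{id}_{\mathcal{V}} - \rho_{1}h - h\rho_{1}$ in the left-hand side, e.g.
\begin{align*}
(\theta_{2})_{\lambda+\mu}((\theta_{2})_{\lambda}(x,y), z) = p(\rho_{2})_{\lambda+\mu}\bigl(ip (\rho_{2})_{\lambda}(i(x), i(y)), i(z)\bigr),
\end{align*}
and analogously for the other summand. The contribution of $\mathrm{id}_\mathcal{V}$ from both summands together equals $p$ applied to $(\rho_{2} \diamond \rho_{2})_{\lambda, \mu}(i(x), i(y), i(z))$, which vanishes by the shifted conformal associativity of $\rho_{2}$ on $\mathcal{V}$ (the $n=3$ identity with $\rho_{k}=0$ for $k\geq3$). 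What remains are the $\rho_{1}h$- and $h\rho_{1}$-pieces, which I then need to show assemble into $-(\partial_{\theta_{1}}(\theta_{3}))_{\lambda,\mu}(x,y,z)$.

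The $h\rho_{1}$-pieces I will handle by pushing the inner $\rho_{1}$ outward through its adjacent $\rho_{2}$ via $\partial_{\rho_{1}}(\rho_{2}) = 0$ and then applying $\rho_{1}i = i\theta_{1}$; these produce the first summands of $(\theta_{3})_{\lambda,\mu}(\theta_{1}(x), y, z)$ and $(\theta_{3})_{\lambda,\mu}(x, \theta_{1}(y), z)$. The $\rho_{1}h$-pieces I will handle dually: push the outer $\rho_{1}$ past the outer $\rho_{2}$; one branch yields $p\rho_{1} = \theta_{1}p$ acting on the first summand of $(\theta_{3})_{\lambda,\mu}(x,y,z)$, while the other produces a $\rho_{1}i(z) = i\theta_{1}(z)$, giving the first summand of $(\theta_{3})_{\lambda,\mu}(x, y, \theta_{1}(z))$. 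The mirror analysis of the second summand of the left-hand side reproduces the $(-1)^{|x|}$-weighted second summands of $\theta_{3}$ throughout; collecting all contributions and signs then yields precisely $-(\partial_{\theta_1}(\theta_3))_{\lambda,\mu}(x,y,z)$.

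The main obstacle will be bookkeeping of Koszul signs (which depend on the shifted degrees of intermediate expressions such as $h(\rho_{2})_{\lambda}(i(x), i(y))$, whose degree is $|x|+|y|$) and of the $\lambda$-parameters distributed by the derivation identity. No new conceptual ingredient is needed beyond the contraction data, the chain-map conditions, the derivation property $\partial_{\rho_{1}}(\rho_{2}) = 0$ and the shifted conformal associativity $\rho_{2} \diamond \rho_{2} = 0$.
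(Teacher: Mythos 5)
Your overall strategy is the right one, and in fact the paper offers no proof of this lemma at all (it is dismissed as a ``straightforward observation''), so your plan supplies an argument where the paper has none. Part one is complete as sketched: the formula you write for $(\partial_{\theta_1}(\theta_2))_\lambda(x,y)$ is exactly $\theta_1\diamond_1\theta_2+\theta_2\diamond_1\theta_1+\theta_2\diamond_2\theta_1$ evaluated on $(x,y)$, and the chain-map relations $\theta_1 p=p\rho_1$, $\rho_1 i=i\theta_1$ reduce it to $p\big((\partial_{\rho_1}(\rho_2))_\lambda(i(x),i(y))\big)=0$. For part two, the decomposition into the $\mathrm{id}_{\mathcal V}$-piece (killed by $\rho_2\diamond\rho_2=0$), the $h\rho_1$-pieces (which, after pushing the inner $\rho_1$ outward by $\partial_{\rho_1}(\rho_2)=0$ and using $\rho_1 i=i\theta_1$, produce the terms of $\partial_{\theta_1}(\theta_3)$ with $\theta_1$ on the first two inputs) and the $\rho_1 h$-pieces (producing $\theta_1\circ\theta_3$ and the term with $\theta_1$ on the last input) is exactly correct.

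The one place where your plan is not yet a proof is the step you explicitly defer, and it is not mere bookkeeping: carrying out your own computation with the paper's sign conventions, the four homotopy terms assemble into $+\,\partial_{\theta_1}(\widetilde\theta_3)$ where
\begin{align*}
(\widetilde\theta_3)_{\lambda,\mu}(x,y,z)=p(\rho_2)_{\lambda+\mu}\big(h(\rho_2)_\lambda(i(x),i(y)),i(z)\big)+p(\rho_2)_\lambda\big(i(x),h(\rho_2)_\mu(i(y),i(z))\big),
\end{align*}
with \emph{no} Koszul sign $(-1)^{|x|}$ on the second summand (the operator $h\circ\rho_2$ has degree $0$, so commuting it past $i(x)$ produces no sign), whereas the lemma as printed asserts the answer is $-\partial_{\theta_1}(\theta_3)$ for a $\theta_3$ carrying an internal $(-1)^{|x|}$. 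So when you ``collect all contributions and signs'' you will not land on the displayed formula verbatim; you will need to either adjust $\theta_3$ by an overall sign and drop the internal $(-1)^{|x|}$ (which also makes it consistent with the unsigned tree formula $\rho_T=\rho_2((h\rho_{T_1})\otimes(h\rho_{T_2}))$ the paper uses afterwards), or identify a sign convention that rescues the printed statement. Note also that the printed $\theta_3$ contains evident typos ($\mu_2$ where $\rho_2$ is meant, and a repeated subscript $\lambda$ where the inner multiplication should carry $\mu$), which your write-up should silently correct. With that reconciliation made explicit, your argument is complete.
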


It follows from the above lemma that $\theta_2$ need not satisfy the shifted conformal associativity condition (in general). That it, $(\mathcal{W}, \theta_1, \theta_2)$ need not be an $A_\infty [1]$-conformal algebra. However, it does satisfy if the maps $i$ and $p$ are inverses to each other. Further, the obstruction for the shifted conformal associativity of $\theta_2$ is given by $- \partial_{\theta_1} (\theta_3)$. Observe that the identity (\ref{obs}) is equivalent to the condition $\theta_2 \diamond \theta_2 = - \partial_{\theta_1} (\theta_3)$, which is nothing but the condition (\ref{par-a1}) for $n=3$. In fact, in the following, we will construct a sequence $\{ \theta_k : \mathcal{W}^{\otimes k} \rightarrow \mathcal{W} [\lambda_1, \ldots, \lambda_{k-1}] \}_{k \geq 3}$ of degree $-1$ conformal sesquilinear maps satisfying (\ref{par-a1}). Moreover, for $k=3$, the construction coincides with $\theta_3$ as defined in the above lemma. Our method for the constructions of higher $\theta_k$'s is similar to the classical case of $A_\infty$-algebras \cite{markl-a-inf,kadei}. For that, we first define a sequence $\{ \rho_k : \mathcal{V}^{\otimes k} \rightarrow \mathcal{V} [\lambda_1, \ldots, \lambda_{k-1}] \}_{k \geq 2}$ of maps inductively as follows. We define inductively as
\begin{align*}
    \rho_k = \sum_{ \substack{i+j = k\\ i, j \geq 1}} \rho_2 \big( (h \circ \rho_i) \otimes (h \circ \rho_j) \big)
\end{align*}
with the convention that $h \circ \rho_1 = \mathrm{id}$. Alternatively, the maps $\rho_k$'s can be defined in terms of the planar binary trees with $k$ leaves. For any planar binary tree $T$ with $k$ leaves ($k \geq 3$), there exist unique planar binary trees $T_1$ and $T_2$ such that $T = T_1 \vee T_2$, where $\vee$ stands for the grafting of trees. For such $T$, we associate a conformal sesquilinear map $\rho_T : \mathcal{V}^{\otimes k} \rightarrow \mathcal{V} [\lambda_1, \ldots, \lambda_{k-1}]$ by $\rho_T = \rho_2 \big( (h \circ \rho_{T_1}) \otimes (h \circ \rho_{T_2} ) \big)$ with the convention that $h \circ \rho_{ |} = \mathrm{id}$, where $|$ is the unique planar tree with one leaf. Finally, we define
\begin{align*}
    \rho_k := \sum_{T \in \mathsf{PBT}_{(k)}} \rho_T,
\end{align*}
where $\mathsf{PBT}_{(k)}$ is the set of all isomorphism classes of planar binary trees with $k$ leaves. Using the map $\rho_k$, we now define $\theta_k : \mathcal{W}^{\otimes k} \rightarrow \mathcal{W} [\lambda_1, \ldots, \lambda_{k-1}]$ by $\theta_k := p \circ \rho_k \circ i^{\otimes k}$, for $k \geq 3$.

\begin{thm} (Homotopy transfer theorem)
Let (\ref{contraction-data}) be a contraction data. If $(\mathcal{V}, \rho_1, \rho_2)$ is an $A_\infty [1]$-conformal algebra then $(\mathcal{W}, \{ \theta_k \}_{k \geq 1})$ is an $A_\infty [1]$-conformal algebra.
\end{thm}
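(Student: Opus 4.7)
Conformal sesquilinearity and the degree condition on each $\theta_k$ are automatic from the construction, since $\rho_2$ is degree $-1$ and conformal sesquilinear and the recursive definition involves only $\rho_2$, $h$, $i$, $p$, all of which are $\mathbb{C}[\partial]$-linear and thereby preserve polynomial arguments in the spectral variables. The substantive task is to verify the shifted $A_\infty[1]$-conformal identities, which in view of equation (\ref{par-a1}) amount to $\sum_{k+l=n+1} \theta_k \diamond \theta_l = 0$ for all $n \geq 2$. I would proceed by strong induction on $n$. The base case $n=2$ is the first assertion of the preceding lemma; the case $n=3$ follows from (\ref{obs}) once one checks that the $\theta_3$ produced there coincides with $p\circ \rho_3 \circ i^{\otimes 3}$ for the tree recursion $\rho_3 = \rho_2\circ(h\rho_2\otimes\mathrm{id}) + \rho_2\circ(\mathrm{id}\otimes h\rho_2)$.

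For $n\geq 4$, the strategy is to first establish a structural identity on $\mathcal{V}$ for the tree-defined sequence $\{\rho_k\}_{k\geq 2}$, and then transfer it to $\mathcal{W}$ by composing with $p$ and $i^{\otimes n}$. Writing $\bar{\rho}_1 := \mathrm{id}_{\mathcal{V}}$ and $\bar{\rho}_k := h\rho_k$ for $k\geq 2$, so that $\rho_n = \sum_{i+j=n,\, i,j\geq 1}\rho_2\circ(\bar{\rho}_i \otimes \bar{\rho}_j)$, the two ingredients are (i) the shifted derivation identity $\rho_1\rho_2 + \rho_2\diamond \rho_1 = 0$ provided by the dga hypothesis on $(\mathcal{V},\rho_1,\rho_2)$, and (ii) the retract identity $\rho_1 h + h\rho_1 = \mathrm{id}_{\mathcal{V}} - ip$. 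Together these give $\rho_1 \bar{\rho}_k = \rho_k - (ip)\rho_k - h\rho_1\rho_k$ for $k\geq 2$. Pushing $\rho_1$ through the tree recursion for $\rho_n$ and repeatedly invoking the induction hypothesis produces an identity of the form
\begin{align*}
\partial_{\rho_1}(\rho_n) + \sum_{\substack{k+l=n+1 \\ k,l\geq 2}} \rho_k \diamond_{(ip)} \rho_l = 0 \quad \text{on } \mathcal{V},
\end{align*}
where $\diamond_{(ip)}$ denotes the composition $\diamond$ with an extra $ip$ inserted between the inner $\rho_l$ output and the outer $\rho_k$ input. Pre-composing with $i^{\otimes n}$ and post-composing with $p$, the $\partial_{\rho_1}$-term becomes $\partial_{\theta_1}(\theta_n)$ by the chain map identities $p\rho_1 = \theta_1 p$ and $\rho_1 i = i\theta_1$, while the inserted $ip$ in $\diamond_{(ip)}$ is absorbed together with the outer $pi = \mathrm{id}_{\mathcal{W}}$ to yield exactly $\theta_k \diamond \theta_l$. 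This completes the inductive step.

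The hard part will be the combinatorial and sign bookkeeping when pushing $\rho_1$ through the nested tree decomposition of $\rho_n$, and verifying that the non-$ip$ error terms (those where $\mathrm{id}$ or $h\rho_1$ is substituted for $ip$ in the retract identity) telescope across the sum over split trees, and that the $h\rho_1$-type corrections cancel against the induction hypothesis at smaller arities after one re-expansion. This is parallel to the classical homotopy transfer theorem for $A_\infty$-algebras as established by Kadeishvili and Markl; the conformal adaptation is essentially automatic because all the operations $i,p,h,\rho_1,\rho_2$ are $\mathbb{C}[\partial]$-linear, so the spectral variables $\lambda_1,\ldots,\lambda_{n-1}$ propagate through every manipulation unchanged.
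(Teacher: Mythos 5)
Your plan follows exactly the route the paper takes: the paper defines $\theta_k = p\circ\rho_k\circ i^{\otimes k}$ via the same planar-binary-tree recursion and then simply states that ``the proof is similar to the classical case'' of Kadeishvili and Markl, which is precisely the inductive push-$\rho_1$-through-the-trees argument you outline. Your proposal is consistent with (and in fact more explicit than) the paper's own treatment, correctly identifying the retract identity $\mathrm{id}_{\mathcal{V}} - ip = \rho_1 h + h\rho_1$ and the vanishing of $\rho_2\diamond\rho_2$ as the two ingredients that make the telescoping work.
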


The proof of the above theorem is similar to the classical case \cite{markl-a-inf,kadei}. The above homotopy transfer theorem can be further generalized. More precisely, we will now show that if the chain complex $(\mathcal{V}, \rho_1)$ inherits a full $A_\infty [1]$-conformal algebra structure, then it can be transferred to $(\mathcal{W}, \theta_1)$. In this case, one need to use planar trees (not necessarily binary). For any planar tree $T$ with $k$ leaves, there exist unique planar trees $T_1, \ldots, T_l$ such that $T = T_1 \vee \cdots \vee T_l$. Then we define $\rho_T = \rho_l \big(  (h \circ \rho_{T_1}) \otimes \cdots \otimes  (h \circ \rho_{T_l})   \big)$, and 
\begin{align*}
    \rho_k := \sum_{T \in \mathsf{PB}_{(k)}} \rho_T,
\end{align*}
where $\mathsf{PB}_{(k)}$ is the set of all isomorphism classes of planar trees with $k$ leaves. Finally, we define the map $\theta_k : \mathcal{W}^{\otimes k} \rightarrow \mathcal{W} [\lambda_1, \ldots, \lambda_{k-1}]$ $\theta_k := p \circ \rho_k \circ i^{\otimes k}$, for $k \geq 3$.

\begin{thm} (Homotopy transfer theorem) 
    Let (\ref{contraction-data}) be a contraction data. If $(\mathcal{V}, \{ \rho_k \}_{k \geq 1})$ is an $A_\infty [1]$-conformal algebra, then this structure transfers to an $A_\infty [1]$-conformal algebra structure on $(\mathcal{W}, \theta_1)$.
\end{thm}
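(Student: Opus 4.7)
The plan is to verify the $A_\infty[1]$-conformal identities (\ref{par-a1}) for the transferred structure maps $\theta_k = p \circ \rho_k \circ i^{\otimes k}$. Conformal sesquilinearity of each $\theta_k$ is automatic from the conformal sesquilinearity of the $\rho_l$'s together with the $\mathbb{C}[\partial]$-linearity of $p$, $i$ and $h$, so the substance lies in the associativity-type relations $\sum_{k+l=n+1} \theta_k \diamond \theta_l = 0$. The case $n=1$ reduces to $\theta_1^2 = p\rho_1^2 i = 0$, using that $i$ and $p$ are chain maps.

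For $n \geq 2$ I would argue via the planar tree description of $\rho_n = \sum_{T \in \mathsf{PB}_{(n)}} \rho_T$. Given a tree $T$ with grafting decomposition $T = T_1 \vee \cdots \vee T_l$, every internal edge of $T$ carries a copy of the homotopy operator $h$. The key tool is the homotopy identity
\begin{align*}
\mathrm{id}_\mathcal{V} = ip + \rho_1 h + h \rho_1,
\end{align*}
which I would insert in place of $\mathrm{id}$ at suitable positions within $\rho_n \circ i^{\otimes n}$, combined with the $A_\infty[1]$-conformal identities satisfied by $\{\rho_k\}_{k \geq 1}$ at each internal vertex. When the $ip$-summand is inserted at an internal edge of $T$ that splits it into an upper subtree with $l$ leaves and a lower subtree with $k$ leaves (so $k+l = n+1$), one gets, after pre-composition with $i^{\otimes n}$ and post-composition with $p$, exactly a term of the shape $\theta_k \diamond_i \theta_l$. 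Summing over all $T \in \mathsf{PB}_{(n)}$ and all internal edges, these assemble into $-\sum_{k+l=n+1,\, k,l>1} \theta_k \diamond \theta_l$. The remaining $\rho_1 h$ and $h \rho_1$ pieces, together with the contributions of $\rho_1$ coming from the $A_\infty[1]$-identity at each vertex, combine to $\partial_{\theta_1}(\theta_n) = \llbracket \theta_1, \theta_n \rrbracket$, plus terms that telescope along internal edges and cancel when summed over $T$.

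The principal obstacle is the combinatorial and sign bookkeeping required to match each internal term with a boundary ``cut-edge'' term with the correct Koszul sign, and to confirm that no orphan contributions remain. This is essentially the same skeleton as in the classical constructions of Kadeishvili and Markl \cite{markl-a-inf, kadei} for ordinary $A_\infty$-algebras. The conformal setting does not perturb the combinatorics: the formal variables $\lambda_1, \ldots, \lambda_{n-1}$ label the leaves of $T$ and are additively propagated along internal edges, which is exactly compatible with the grafting operation underlying the $\diamond_i$-products. Since we work in the $A_\infty[1]$-formulation where every $\rho_k$ and every $\theta_k$ has fixed degree $-1$, the Koszul sign rule reduces to the one appearing in the classical case and no new sign subtleties enter. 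Consequently, the argument of \cite{markl-a-inf, kadei} transports essentially verbatim and yields (\ref{par-a1}) for the family $\{\theta_k\}_{k \geq 1}$, thereby making $(\mathcal{W}, \{\theta_k\}_{k \geq 1})$ an $A_\infty[1]$-conformal algebra.
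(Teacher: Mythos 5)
Your proposal is correct and follows essentially the same route as the paper, which itself gives no independent argument but defers to the classical transfer proofs of Kadeishvili and Markl \cite{markl-a-inf,kadei}; your outline (tree decomposition of $\rho_n$, insertion of $\mathrm{id}_{\mathcal{V}} = ip + \rho_1 h + h\rho_1$ at internal edges, matching of the $ip$-cut terms with $\theta_k \diamond_i \theta_l$ and absorption of the remaining pieces into $\partial_{\theta_1}(\theta_n)$) is exactly the intended adaptation. Your observations that the $A_\infty[1]$-formulation fixes all structure maps in degree $-1$ and that the $\lambda$-variables propagate additively along grafted edges correctly identify why the conformal setting introduces no new difficulties.
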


Recall that an $A_\infty$-conformal algebra is equivalent to an $A_\infty [1]$-conformal algebra on the shifted graded $\mathbb{C}[\partial]$-module (Proposition \ref{proposition 3.6}). On the other hand, if there is a deformation retract of chain complexes, then it induces a deformation retract of the shifted chain complexes. Combining these results, we can now prove homotopy transfer theorem for $A_\infty$-conformal algebras.

\begin{thm} (Homotopy transfer theorem) Let $\xymatrix{
	  (\mathcal{A} , \mu_1)  \ar@(ul,ur)^h \ar@<2pt>[r]^{p} & ( \mathcal{H}, \nu_1) \ar@<2pt>[l]^{i} 
}$ be a given contraction data. If $(\mathcal{A}, \{ \mu_k \}_{k \geq 1})$ is an $A_\infty$-conformal algebra, then it can be transferred to an $A_\infty$-conformal algebra structure on $(\mathcal{H}, \nu_1)$.
\end{thm}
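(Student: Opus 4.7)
The plan is to reduce the statement to the previously proved homotopy transfer theorem for $A_\infty[1]$-conformal algebras by using the degree-shift equivalence of Proposition \ref{proposition 3.6}. Let $s : \mathcal{A} \to \mathcal{A}[-1]$ and $s : \mathcal{H} \to \mathcal{H}[-1]$ denote the canonical degree $+1$ identification maps, with inverses $s^{-1}$.

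First, I would shift the contraction data. Define $\rho_1 := s \circ \mu_1 \circ s^{-1}$, $\rho_1^{\mathcal{H}} := s \circ \nu_1 \circ s^{-1}$, $\widetilde{i} := s \circ i \circ s^{-1}$, $\widetilde{p} := s \circ p \circ s^{-1}$ and $\widetilde{h} := s \circ h \circ s^{-1}$. A direct computation, amounting to conjugating the defining identities of the contraction data by $s$ and $s^{-1}$ and using $s \circ s^{-1} = \mathrm{id}$, shows that $\widetilde{p}, \widetilde{i}$ are $\mathbb{C}[\partial]$-linear chain maps between $(\mathcal{A}[-1], \rho_1)$ and $(\mathcal{H}[-1], \rho_1^{\mathcal{H}})$, that $\widetilde{p}\widetilde{i} = \mathrm{id}_{\mathcal{H}[-1]}$, and that $\mathrm{id}_{\mathcal{A}[-1]} - \widetilde{i}\widetilde{p} = \rho_1 \widetilde{h} + \widetilde{h} \rho_1$. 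By Proposition \ref{proposition 3.6}, the given $A_\infty$-conformal algebra structure $\{\mu_k\}_{k \geq 1}$ on $\mathcal{A}$ induces an $A_\infty[1]$-conformal algebra structure $\{\rho_k := (-1)^{k(k-1)/2}\, s \circ \mu_k \circ (s^{-1})^{\otimes k}\}_{k \geq 1}$ on $\mathcal{A}[-1]$, whose degree $-1$ differential is precisely the $\rho_1$ above.

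Applying the homotopy transfer theorem for $A_\infty[1]$-conformal algebras to this shifted contraction data and the structure $\{\rho_k\}_{k \geq 1}$ produces an $A_\infty[1]$-conformal algebra $(\mathcal{H}[-1], \{\theta_k\}_{k \geq 1})$, whose higher operations are given by the planar tree sum formula involving $\widetilde{p}, \widetilde{i}, \widetilde{h}$ and $\{\rho_l\}_{l \geq 2}$. Finally, setting $\nu_k := (-1)^{k(k-1)/2}\, s^{-1} \circ \theta_k \circ s^{\otimes k}$ for $k \geq 1$ and invoking Proposition \ref{proposition 3.6} in the reverse direction produces the desired $A_\infty$-conformal algebra $(\mathcal{H}, \{\nu_k\}_{k \geq 1})$, whose $\nu_1$ coincides with the prescribed differential on $\mathcal{H}$ by construction. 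The main technical obstacle is the careful bookkeeping of the Koszul signs that appear when identifying $\mu_k$ with $\rho_k$, when transporting the homotopy $h$, and when summing over planar trees; once a consistent sign convention is fixed the whole procedure is simply transport of structure along the equivalence of Proposition \ref{proposition 3.6}.
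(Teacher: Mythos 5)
Your proposal is correct and follows essentially the same route as the paper: the paper's own justification is precisely the remark preceding the theorem, namely that a deformation retract of chain complexes induces one on the shifted complexes, so the $A_\infty[1]$-conformal homotopy transfer theorem applies after the degree shift of Proposition \ref{proposition 3.6} and the result is transported back. You have simply written out the transport-of-structure details (and flagged the Koszul sign bookkeeping) that the paper leaves implicit.
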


\section{Characterizations of some homotopy associative conformal algebras} \label{section-4}
In this section, we consider those $A_\infty$-conformal algebras whose underlying graded $\mathbb{C}[\partial]$-module is concentrated only in two degrees. In particular, we study  `skeletal' $A_\infty $-conformal algebras. We show that skeletal $A_\infty$-conformal algebras can be characterized by third cocycles of associative conformal algebras.

A $2$-term  $A_\infty $-conformal algebra is an $A_\infty $-conformal algebra $(\mathcal{A}, \{ \mu_k\}_{k \geq 1})$  whose underlying graded $\mathbb{C}[ \partial ]$-module $\mathcal{ A }$ is concentrated in degrees $0$ and $1$, that is, $\mathcal{ A }= \mathcal{ A}_0 \oplus \mathcal{ A}_1$. In that case, we have $\mu_k=0 $ for $ k >3 $. If we write explicitly the higher associative conformal identities (\ref{a-inf-conf}), we obtain the following. 
  
  \begin{definition}\label{definition 4.1}
       A {\em $2$-term $A_\infty$-conformal algebra}  is a graded $\mathbb{C}[ \partial ]$-module $\mathcal{ A }= \mathcal{ A}_0 \oplus \mathcal{A}_{1}$ equipped with
   
 - a  $\mathbb{C}[\partial]$-module homomorphism $\beta: \mathcal{ A}_{1} \to \mathcal{ A}_{0}$,
 
 - a $\mathbb{C}$-linear conformal sesquilinear map $\mu_{2} : \mathcal{A}_{i} \otimes \mathcal{A}_{j} \to \mathcal{A}_{i+j}[\lambda]$, for $0 \leq i, j, i+j \leq 1$,
 
- a $\mathbb{C}$-linear conformal sesquilinear map $\mu_{3} : \mathcal{A}_{0} \otimes \mathcal{A}_{0} \otimes \mathcal{A}_{0} \to \mathcal{A}_{1}[\lambda_{1},\lambda_{2}]$ subject to the following conditions:
 \begin{enumerate}
 \item[(i)] $(\mu_{2})_{\lambda}(m,n) = 0$,
     \item[(ii)] $ \beta ((\mu_{2})_{\lambda}(a,m)) = (\mu_{2})_{\lambda}(a, \beta m)$,
     \item[(iii)] $\beta ((\mu_{2})_{\lambda}(m,a)) = (\mu_{2})_{\lambda}(\beta m,a)$,
     \item[(iv)] $(\mu_{2})_{\lambda}(\beta m,n) = (\mu_{2})_{\lambda}(m, \beta n)$,
      \item[(v)] $\beta ((\mu_{3})_{\lambda_{1},\lambda_{2}}(a,b,c)) = (\mu_{2})_{\lambda_{1}+\lambda_{2}}((\mu_{2})_{\lambda_{1}}(a,b),c)-(\mu_{2})_{\lambda_{1}}(a,(\mu_{2})_{\lambda_{2}}(b,c))$\label{equation 5},
     \item[(vi)] $(\mu_{3})_{\lambda_{1},\lambda_{2}}(a,b, \beta m)= (\mu_{2})_{\lambda_{1}+\lambda_{2}}((\mu_{2})_{\lambda_{1}}(a,b),m)-(\mu_{2})_{\lambda_{1}}(a,(\mu_{2})_{\lambda_{2}}(b,m))$\label{equation 6},
     \item[(vii)] $(\mu_{3})_{\lambda_{1},\lambda_{2}}(a, \beta m,c)= (\mu_{2})_{\lambda_{1}+\lambda_{2}}((\mu_{2})_{\lambda_{1}}(a,m),c)-(\mu_{2})_{\lambda_{1}}(a,(\mu_{2})_{\lambda_{1}}(m,c))$\label{equation 7},
     \item[(viii)] $(\mu_{3})_{\lambda_{1},\lambda_{2}}(\beta m,b,c)= (\mu_{2})_{\lambda_{1}+\lambda_{2}}((\mu_{2})_{\lambda_{1}}(m,b),c)-(\mu_{2})_{\lambda_{1}}(m,(\mu_{2})_{\lambda_{1}}(b,c))$\label{equation 8},
     \item[(ix)] $(\mu_{3})_{\lambda_{1}+\lambda_{2},\lambda_{3}}((\mu_{2})_{\lambda_{1}}(a,b),c,d) - (\mu_{3})_{\lambda_{1},\lambda_{2}+\lambda_{3}}(a,(\mu_{2})_{\lambda_{2}}(b,c),d)+ (\mu_{3})_{\lambda_{1},\lambda_{2}}(a,b,(\mu_{2})_{\lambda_{3}}(c,d))\\
     \quad =(\mu_{2})_{\lambda_{1}+\lambda_{2}+\lambda_{3}}((\mu_{3})_{\lambda_{1},\lambda_{2}}(a,b,c),d)+(\mu_{2})_{\lambda_{1}}(a,(\mu_{3})_{\lambda_{2},\lambda_{3}}(b,c,d))$\label{equation 9},
 \end{enumerate}
  for all $a,b,c,d \in \mathcal{A}_{0}$ and $m,n \in \mathcal{A}_{1}$.
 We denote a $2$-term $A_{\infty}$-conformal algebra as above simply by $(\mathcal{A}_{1}\xrightarrow{\beta}\mathcal{A}_{0},\mu_{2},\mu_{3})$.
 \end{definition}
  
  Let $(\mathcal{A}_1 \xrightarrow{\beta} \mathcal{A}_0, \mu_2, \mu_3)$ and $(\mathcal{A}'_1 \xrightarrow{\beta'} \mathcal{A}'_0, \mu'_2, \mu'_3)$ be two $2$-term $A_\infty$-conformal algebras. A {\em homomorphism} of $2$-term $A_\infty$-conformal algebras from $(\mathcal{A}_1 \xrightarrow{\beta} \mathcal{A}_0, \mu_2, \mu_3)$ to $(\mathcal{A}'_1 \xrightarrow{\beta'} \mathcal{A}'_0, \mu'_2, \mu'_3)$ is given by a triple $(f_0, f_1, f_2)$ consisting of $\mathbb{C}[\partial]$-module maps $f_0 : \mathcal{A}_0 \rightarrow \mathcal{A}'_0$ and $f_1 : \mathcal{A}_1 \rightarrow \mathcal{A}'_1$, and a conformal sesquilinear map $f_2 : \mathcal{A}_0 \otimes \mathcal{A}_0 \rightarrow \mathcal{A}_1' [\lambda],$ $a \otimes b \mapsto (f_2)_\lambda (a, b)$ such that the following identities are hold: for any $a, b, c \in \mathcal{A}_0$ and $m \in \mathcal{A}_1$,
  \begin{align*}
      \beta' \big(  (f_2)_\lambda (a, b) \big) =~& f_0 \big(  (\mu_2)_\lambda (a, b) \big) - (\mu'_2)_\lambda \big( f_0 (a), f_0 (b)  \big), \\
      (f_2)_\lambda (a, \beta m) =~& f_1 \big(  (\mu_2)_\lambda (a, m) \big) - (\mu'_2)_\lambda \big(  f_0 (a), f_1 (m) \big), \\
      (f_2)_\lambda (\beta m, a) =~& f_1 \big( (\mu_2)_\lambda (m,a) \big) - (\mu'_2)_\lambda \big( f_1(m), f_0 (a) \big), \\
      (f_2)_\lambda \big( a, (\mu_2)_\mu (b, c) \big) -& (f_2)_{\lambda + \mu} \big(  (\mu_2)_\lambda (a, b), c \big) + (\mu'_2)_{\lambda + \mu} \big( (f_2)_{\lambda} (a, b), f_0 (c) \big)  - (\mu_2')_\lambda \big( f_0 (a), (\mu_2)_\mu (b, c)  \big) \\
      =~& (\mu'_3)_{\lambda, \mu} \big(  f_0 (a), f_0 (b), f_0 (c) \big) - f_1 \big( (\mu_3)_{\lambda, \mu} (a, b, c)  \big).
  \end{align*}

  Suppose $f = (f_0, f_1, f_2)$ is a homomorphism of $2$-term $A_\infty$-conformal algebras from $(\mathcal{A}_1 \xrightarrow{\beta} \mathcal{A}_0, \mu_2, \mu_3)$ to $(\mathcal{A}'_1 \xrightarrow{\beta'} \mathcal{A}'_0, \mu'_2, \mu'_3)$, and $g = (g_0, g_1, g_2)$ is a homomorphism from $2$-term $A_\infty$-conformal algebras from $(\mathcal{A}'_1 \xrightarrow{\beta'} \mathcal{A}'_0, \mu'_2, \mu'_3)$ to $(\mathcal{A}''_1 \xrightarrow{\beta''} \mathcal{A}''_0, \mu''_2, \mu''_3)$. Their composition is the homomorphism 
  \begin{align*}
  (g \circ f) :=~& \big( g_0 \circ f_0 , g_1 \circ f_1, (g \circ f)_2 \big), \text{ where } \\
  \big(  (g \circ f)_2  \big)_\lambda (a, b) =~& (g_2)_\lambda \big(  f_0 (a), f_0 (b)  \big) + g_1 \big(   (f_2)_\lambda (a, b) \big), \text{ for } a, b \in \mathcal{A}_0.
\end{align*}
The collection of all  $2$-term $A_{\infty}$-conformal algebras and homomorphisms between them form a category, denoted by ${\bf 2A_{\infty}conf}$.

  \medskip

  \medskip
  
\noindent {\bf Skeletal $A_{\infty}$-conformal algebras.}
Here we focus on skeletal $A_{\infty}$-conformal algebras which are a particular type of $2$-term $A_{\infty}$-conformal algebras. We also characterize skeletal $A_{\infty}$-conformal algebras by third cocycles of associative conformal algebras.

\begin{definition}
    A $2$-term $A_{\infty}$-conformal algebra $(\mathcal{A}_{1}\xrightarrow{ \beta }\mathcal{A}_{0},\mu_{2},\mu_{3})$ is called {\em skeletal} if $\beta =0$.
\end{definition}

Let $(\mathcal{A}_{1}\xrightarrow{0}\mathcal{A}_{0},\mu_{2},\mu_{3})$ and $(\mathcal{A}_{1}\xrightarrow{0}\mathcal{A}_{0},\mu_{2}^{\prime},\mu_{3}^{\prime})$ be two skeletal $A_{\infty}$-conformal algebras with the same underlying graded $\mathbb{C}[\partial]$-module. They are said to be {\em equivalent} if $\mu_{2} = \mu_{2}^{\prime}$ and there exists a conformal sesquilinear map $\sigma : \mathcal{A}_{0}\otimes\mathcal{A}_{0} \to \mathcal{A}_{1}[\lambda]$ such that 
  \begin{align}
   \label{equiv-co}
      (\mu_{3})^{\prime}_{\lambda_{1},\lambda_{2}}(a,b,c) = ~&(\mu_{3})_{\lambda_{1},\lambda_{2}}(a,b,c) +(\mu_{2})_{\lambda_{1}}(a,\sigma_{\lambda_{2}}(b,c)) - \sigma_{\lambda_{1}+\lambda_{2}}((\mu_{2})_{\lambda_{1}}(a,b),c)\\&
       -\sigma_{\lambda_{1}}(a,(\mu_{2})_{\lambda_{2}}(b,c))-(\mu_{2})_{\lambda_{1}+\lambda_{2}}(\sigma_{\lambda_{1}}(a,b),c), \nonumber
  \end{align}
  for all $a,b,c \in \mathcal{A}_{0}$.  With these, we can characterize (equivalence classes of) skeletal $A_{\infty}$-conformal algebras.
 
  \begin{thm}\label{Theorem 2}
      There is a one-to-one correspondence between skeletal $A_{\infty}$-conformal algebras and tuples $(A,M,\Theta)$ in which $A$ is an associative conformal algebra, $M$ is a conformal $A$-bimodule and $\Theta \in Z^{3}(A,M)$ is a $3$-cocycle.
      
      Moreover, this extends to a one-to-one correspondence between the equivalence classes of skeletal $A_{\infty}$-conformal algebras and tuples $(A,M,[\Theta])$, where $[\Theta] \in H^{3}(A,M)$.
  \end{thm}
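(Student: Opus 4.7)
The plan is to unpack Definition \ref{definition 4.1} under the assumption $\beta = 0$ and to match each axiom with a piece of the data $(A, M, \Theta)$. Starting with a skeletal $A_\infty$-conformal algebra $(\mathcal{A}_1 \xrightarrow{0} \mathcal{A}_0, \mu_2, \mu_3)$, I would set $A := \mathcal{A}_0$ with $\lambda$-multiplication $\mu_2|_{\mathcal{A}_0 \otimes \mathcal{A}_0}$, and $M := \mathcal{A}_1$ with left and right $\lambda$-actions $\mu_2|_{\mathcal{A}_0 \otimes \mathcal{A}_1}$ and $\mu_2|_{\mathcal{A}_1 \otimes \mathcal{A}_0}$. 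With $\beta = 0$, conditions (ii)--(iv) become vacuous since both sides vanish, condition (i) is automatic (its target lies in degree $2$), and condition (v) collapses to the conformal associativity of $A$. Conditions (vi), (vii), (viii) with $\beta m = 0$ reduce exactly to the three conformal bimodule axioms for $M$ over $A$. Finally, $\mu_3$ is a conformal sesquilinear map $\mathcal{A}_0^{\otimes 3} \to \mathcal{A}_1[\lambda_1, \lambda_2]$, that is, an element $\Theta := \mu_3 \in C^3(A, M)$.

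The central step is to verify that axiom (ix) is precisely the Hochschild cocycle equation $\delta \Theta = 0$. This is checked by writing out $(\delta \Theta)_{\lambda_1, \lambda_2, \lambda_3}(a, b, c, d)$ using the explicit formula recalled in Section \ref{section-2}, rearranging (ix) as LHS minus RHS equals zero, and matching the five summands on the nose; here one must remember that the outer $\mu_2$'s on the right-hand side of (ix) are the bimodule actions on $M$, while the inner $\mu_2$'s on the left are the multiplications on $A$. This shows $\Theta \in Z^3(A, M)$ and yields the forward direction. The inverse map sends a tuple $(A, M, \Theta)$ to the skeletal structure with $\mathcal{A}_0 := A$, $\mathcal{A}_1 := M$, $\beta := 0$, the four components of $\mu_2$ assembled from the algebra multiplication and the bimodule actions, and $\mu_3 := \Theta$; the same dictionary confirms that all nine axioms of Definition \ref{definition 4.1} are satisfied.

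For the equivalence statement, two skeletal structures with the same $\mu_2$ automatically determine the same pair $(A, M)$. By definition, they are equivalent iff there exists $\sigma \in C^2(A, M)$ satisfying (\ref{equiv-co}). A direct comparison between (\ref{equiv-co}) and the formula for $\delta \sigma$ shows that it reads $\mu_3' - \mu_3 = \delta \sigma$ (up to the sign convention fixed in Section \ref{section-2}), so $[\mu_3] = [\mu_3']$ in $H^3(A, M)$ if and only if the two skeletal $A_\infty$-conformal algebras are equivalent. The main obstacle is the bookkeeping in the identification of axiom (ix) with $\delta \Theta = 0$ and of (\ref{equiv-co}) with $\delta \sigma$: the various occurrences of $\mu_2$ represent three different structures (the algebra multiplication on $A$, and the left and right bimodule actions on $M$) depending on which slots the arguments occupy, and every sign and $\lambda$-label must be tracked carefully. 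Once this correspondence is pinned down, the remainder of the proof is essentially substitution.
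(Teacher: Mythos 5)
Your proposal is correct and follows essentially the same route as the paper: the paper establishes the identical dictionary (associative conformal structure on $\mathcal{A}_0$, conformal bimodule structure on the top degree, the highest structure map a Hochschild cocycle, and equivalence given by adding a coboundary) in the slightly more general setting of skeletal $n$-term $A_\infty$-conformal algebras, deducing Theorem \ref{Theorem 2} as the case $n=2$, while you carry out the same matching directly on the unpacked axioms (i)--(ix) of Definition \ref{definition 4.1}. The only caveat is that equation (\ref{equiv-co}) as printed differs from $\delta\sigma$ by the sign of the term $\sigma_{\lambda_{1}}(a,(\mu_{2})_{\lambda_{2}}(b,c))$; this appears to be a typo in the paper, since the general version (\ref{equivv}) states the relation precisely as $\mu'_{n+1}=\mu_{n+1}+\delta\sigma$, which is the identification your argument uses.
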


\medskip
  

  We will prove a more general result for which the above theorem is a particular case.  An {\em $n$-term  $A_{\infty}$-conformal algebra} is an $A_{\infty}$-conformal algebra $(\mathcal{A},\{\mu_{k}\}_{ k \geq 1})$ whose uderlying graded $\mathbb{C}[\partial]$-module $\mathcal{A}$ is concentrated in degrees $0,1,\ldots,n-1$. That is,
  \begin{align*}
       \mathcal{A} = \mathcal{A}_{0}\oplus\mathcal{A}_{1}\oplus\cdots\oplus\mathcal{A}_{n-1}.
  \end{align*}
  An $n$-term $A_{\infty}$-conformal algebra $(\mathcal{A}_{0}\oplus\mathcal{A}_{1}\oplus\cdots\oplus\mathcal{A}_{n-1}, \{\mu_{k}\}_{k \geq 1})$ is said to be {\em skeletal} 
 if it that has only two nonzero terms $\mathcal{A}_{0}$ and $\mathcal{A}_{n-1}$ with the zero differential $(\mathrm{i.e.}~ \mu_{1} = 0)$. For the degree reason, the maps $\mu_{k}$'s are nonzero only for $k = 2$ and $n+1$. It also turns out from the identities (\ref{a-inf-conf}) that $\mathcal{A}_0$ is an associative conformal algebra with the $\lambda$-multiplication $a_\lambda b := (\mu_2)_\lambda (a, b)$, for $a, b \in \mathcal{A}_0$. Moreover, $\mathcal{A}_{n-1}$ is a conformal $\mathcal{A}_0$-bimodule with the left and right $\lambda$-actions are respectively given by
 \begin{align*}
     a_\lambda m := (\mu_2)_\lambda (a, m) ~~~ \text{ and } ~~~ m_\lambda a := (\mu_2)_\lambda (m, a), \text{ for } a \in \mathcal{A}_0, m \in \mathcal{A}_{n-1}. 
 \end{align*}
 A skeletal $n$-term $A_{\infty}$-conformal algebra may be simply denoted by $(\mathcal{A}_{n-1}\xrightarrow{0} 0 \to \cdots \xrightarrow{0} \mathcal{A}_{0}, \mu_{2},\mu_{n+1})$.
  
  Let $(\mathcal{A}_{n-1}\xrightarrow{0} 0 \to \cdots \xrightarrow{0} \mathcal{A}_{0}, \mu_{2},\mu_{n+1})$ and  $(\mathcal{A}_{n-1}\xrightarrow{0} 0 \to \cdots \xrightarrow{0} \mathcal{A}_{0}, \mu'_{2},\mu'_{n+1})$ be two skeletal $n$-term  $A_{\infty}$-conformal algebras with the same underlying graded $\mathbb{C}[\partial]$-module. They are said to be {\em equivalent} if $\mu_{2}=\mu_{2}^{\prime}$ and there exists a conformal sesquilinear map $\sigma :( \mathcal{A}_{0})^{\otimes n } \to \mathcal{A}_{n-1}[\lambda_{1},\ldots,\lambda_{n-1}]$ that satisfies 
  \begin{align}\label{equivv}
      (\mu_{n+1}^{\prime})_{\lambda_{1},\ldots,\lambda_{n}}(a_{1},\ldots,a_{n+1}) = (\mu_{n+1})_{\lambda_{1},\ldots,\lambda_{n}}(a_{1},\ldots,a_{n+1}) + (\delta\sigma)_{\lambda_{1},\ldots,\lambda_{n}}(a_{1},\ldots,a_{n+1}), 
  \end{align}
  for all $a_{1},\ldots,a_{n+1} \in \mathcal{A}_{0}$. Here $\delta$ is the Hochschild coboundary operator of the associative conformal algebra $\mathcal{A}_0$ with coefficients in the conformal $\mathcal{A}_0$-bimodule $\mathcal{A}_{n-1}$. With the above notations, we have the following generalization of Theorem \ref{Theorem 2}.
  
  \begin{thm}
      There is a one-to-one correspondence between skeletal $n$-term $A_{\infty}$-conformal algebras and tuples $(A,M,\Theta)$ in which $A$ is an associative conformal algebra, $M$ is a conformal $A$-bimodule and $\Theta \in Z^{n+1}(A,M)$ is a $(n+1)$-cocycle of the associative conformal algebra $A$ with coefficients in the conformal $A$-bimodule $M$. Further, this extends to a one-to-one correspondence between the equivalence classes of skeletal $n$-term $A_{\infty}$-conformal algebras and tuples $(A,M,[\Theta])$, where $[\Theta] \in H^{n+1}(A,M)$.
  \end{thm}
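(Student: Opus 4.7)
The plan is to generalize the argument behind Theorem \ref{Theorem 2} by unpacking the $A_\infty$-conformal identities (\ref{a-inf-conf}) for a graded $\mathbb{C}[\partial]$-module concentrated in degrees $0$ and $n-1$ with $\mu_1 = 0$. Degree considerations immediately force $\mu_k = 0$ for $k \neq 2, n+1$, so the only non-trivial structure maps are $\mu_2$ (degree $0$) and $\mu_{n+1}$ (degree $n-1$, landing in $\mathcal{A}_{n-1}$ when all inputs lie in $\mathcal{A}_0$, and forced to zero whenever any input lies in $\mathcal{A}_{n-1}$ because $2(n-1) > n-1$ for $n \geq 2$).

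For the forward direction, given a skeletal $n$-term $A_\infty$-conformal algebra, set $A := \mathcal{A}_0$ and $M := \mathcal{A}_{n-1}$. The identity (\ref{a-inf-conf}) with three arguments has only the pair $(k,l) = (2,2)$ contributing; reading it off on various combinations of inputs from $A$ and $M$ yields the conformal associativity of the $\lambda$-multiplication on $A$ and the three conformal bimodule axioms for $M$. Next I would examine (\ref{a-inf-conf}) with $n+2$ arguments drawn from $A$: the constraint $k + l = n+3$ with $k, l \in \{2, n+1\}$ forces $(k,l) \in \{(2, n+1), (n+1, 2)\}$, and the identity specializes exactly to the Hochschild cocycle equation $\delta \mu_{n+1} = 0$. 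Thus $\Theta := \mu_{n+1} \in Z^{n+1}(A, M)$.

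For the converse, given $(A, M, \Theta)$ with $\Theta \in Z^{n+1}(A,M)$, I would place $A$ in degree $0$ and $M$ in degree $n-1$, set $\mu_2$ from the $\lambda$-multiplication of $A$ and the bimodule actions of $M$ (with $\mu_2 = 0$ on $M \otimes M$), and define $\mu_{n+1} = \Theta$ on $A^{\otimes (n+1)}$ (zero on any tuple containing a factor from $M$). The $A_\infty$-identities are then all accounted for: three arguments give associativity and bimodule axioms; $n+2$ arguments in $A$ give $\delta \Theta = 0$; $2n+1$ arguments in $A$ would only receive contributions from the pair $(n+1, n+1)$, but each such term contains an outer $\mu_{n+1}$ applied to an element of $M$, hence vanishes; all remaining identities are vacuous by degree.

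For the equivalence statement, relation (\ref{equivv}) is literally $\mu_{n+1}' - \mu_{n+1} = \delta \sigma$ for some conformal sesquilinear $\sigma \in C^n(A,M)$, so the correspondence descends to a bijection between equivalence classes of skeletal $n$-term $A_\infty$-conformal algebras and classes $[\Theta] \in H^{n+1}(A,M)$. The main technical obstacle is the sign bookkeeping for the cocycle equation: one must check that at $(k,l) = (n+1, 2)$ the sign $(-1)^{i(l+1)} = (-1)^i$ (for $i = 1, \ldots, n+1$) reproduces the alternating Hochschild signs, and that at $(k,l) = (2, n+1)$ the signs $(-1)^{i(n+2)}$ give $(-1)^{n+2}$ and $+1$ at $i = 1, 2$, matching the outermost right- and left-action terms of $\delta \Theta$.
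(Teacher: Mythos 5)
Your proposal is correct and follows essentially the same route as the paper: degree reasoning to isolate $\mu_2$ and $\mu_{n+1}$, the three-argument identity yielding the associative conformal algebra and bimodule structures, the $(n+2)$-argument identity on $\mathcal{A}_0$ reducing (with only $(k,l)=(2,n+1)$ and $(n+1,2)$ contributing) to $\delta\mu_{n+1}=0$, the evident converse construction, and the equivalence-class statement read off from (\ref{equivv}). If anything, you are slightly more explicit than the paper about the vanishing of the $(k,l)=(n+1,n+1)$ contributions in the converse direction and about the sign bookkeeping, both of which are welcome.
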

  
   \begin{proof}
      Let $(\mathcal{A}_{n-1}\xrightarrow{0} 0 \to \cdots \xrightarrow{0} \mathcal{A}_{0}, \mu_{2},\mu_{n+1})$ be a skeletal $n$-term $A_{\infty}$-conformal algebra. Then it follows from the definition of an $A_{\infty}$-conformal algebra that the conformal sesquilinear map $\mu_{2} : \mathcal{A}_{0}\otimes\mathcal{A}_{0} \to\mathcal{A}_{0}[\lambda]$ makes the $\mathbb{C}[\partial]$-module $\mathcal{A}_{0}$ into an associative conformal algebra. Moreover, the conformal sesquilinear maps $\mu_{2} : \mathcal{A}_{0}\otimes\mathcal{A}_{n-1} \to\mathcal{A}_{n-1}[\lambda] $ and $\mu_{2} : \mathcal{A}_{n-1}\otimes\mathcal{A}_{0} \to\mathcal{A}_{n-1}[\lambda]$ makes the $\mathbb{C}[\partial]$-module $\mathcal{A}_{n-1}$ into a conformal $\mathcal{A}_{0}$-bimodule. Further, it follows from the higher associative conformal identities $(\ref{a-inf-conf})$ that 
      \begin{align}\label{equation-n}
     &   \sum_{k+l=n+3}\sum_{i=1}^{k}(-1)^{i(l+1) +l(|a_{1}|+\cdots+|a_{i-1}|)}\\&
     (\mu_{k})_{\lambda_{1},\ldots,\lambda_{i-1},\lambda_{i}+\cdots+\lambda_{i+l-1},\ldots,\lambda_{n+1}}(a_{1},\ldots,a_{i-1},(\mu_{l})_{\lambda_{i},\ldots,\lambda_{i+l-2}}(a_{i},\ldots,a_{i+l-1}),a_{i+l},\ldots,a_{n+2})= 0, \nonumber
    \end{align}
    for all $a_1, \ldots, a_{n+2} \in \mathcal{A}_0$.
   The non-zero terms in the above summation occur when $k = 2$ (then $l=n+1$) and $k=n+1$ (then $l=2$). Thus, $(\ref{equation-n})$ is equivalent to 
   \begin{align*}
   (-1)^{n+2}& (\mu_{2})_{\lambda_{1}+\cdots+\lambda_{n+1}}\big( (\mu_{n+1})_{\lambda_{1},\ldots,\lambda_{n}}(a_{1},\ldots,a_{n+1}),a_{n+2} \big) + (\mu_{2})_{\lambda_{1}}\big( a_{1},(\mu_{n+1})_{\lambda_{1},\ldots,\lambda_{n+1}}(a_{2},\ldots,a_{n+2}) \big)\\&
 + \sum_{i=1}^{n+1}(-1)^{3i}(\mu_{n+1})_{\lambda_{1},\ldots,\lambda_{i-1},\lambda_{i}+\lambda_{i+l},\ldots,\lambda_{n+1}}\big( a_{1},\ldots,a_{i-1},(\mu_{2})_{\lambda_{i}}(a_{i},a_{i+1}),\ldots,a_{n+2} \big)= 0.
   \end{align*}
  This is same as $(\delta\mu_{n+1})_{\lambda_{1},\ldots,\lambda_{n+1}}(a_{1},\ldots,a_{n+2}) = 0$. In other words, $\mu_{n+1} \in Z^{n+1}(\mathcal{A}_{0},\mathcal{A}_{n-1})$ is a $(n+1)$-cocycle of the associative conformal algebra $\mathcal{A}_0$ with coefficients in the conformal $\mathcal{A}_0$-bimodule $\mathcal{A}_{n-1}$.

  Conversely, let $(A,M,\Theta)$ be a triple consisting of an associative conformal algebra $A$, a conformal $A$-bimodule $M$ and a $(n+1)$-cocycle $\Theta \in Z^{n+1}(A,M)$. Then it is straightforward to verify that the triple $(M\xrightarrow{0} 0 \rightarrow \cdots \xrightarrow{0} A ,\mu_{2},\mu_{n+1}=\Theta)$ is a skeletal $n$-term $A_{\infty}$-conformal algebra, where $\mu_{2}$ is given by the $\lambda$-multiplication of $A$, and left and right $\lambda$-actions of $A$ on $M$.

  The final part about the correspondence between equivalence classes of skeletal $n$-term $A_{\infty}$-conformal algebras and tuples $(A, M,[\Theta])$ follows from the identity (\ref{equivv}).
  \end{proof}

\medskip

\medskip

\section{Associative conformal 2-algebras } \label{section-5}
In this section, we introduce the notion of associative conformal $2$-algebras. They can be thought of as the categorification of {associative conformal algebras}. Our main result in this section shows that the category of associative conformal $2$-algebras and the category of $2$-term $A_{\infty}$-conformal algebras are equivalent.

Let ${\bf {Vect}^{ \mathbb{C} [\partial]}}$ be the category of all $\mathbb{C} [\partial]$-modules and $\mathbb{C}[\partial]$-linear maps.  A {\em $2$-vector space in the category of $\mathbb{C}[\partial]$-modules} is a category internal to the category  ${\bf {Vect}^{ \mathbb{C} [\partial]}}$.
Thus, it follows that a $2$-vector space in the category of $\mathbb{C}[\partial]$-modules is a category $C=(C_{1} \rightrightarrows C_{0})$ with $\mathbb{C}[\partial]$-module of objects $C_{0}$ and $\mathbb{C}[\partial]$-module of morphisms $C_{1}$ such that the source and target maps $s,t : C_{1} \to C_{0}$, the identity-assigning map $i : C_{0} \to C_{1}$ and the composition map $C_{1}\times_{ C_{0}} C_{1}\to C_{1}$ are all $\mathbb{C}[\partial]$-linear maps. We write the images of the object-inclusion map as $i (a) = 1_a$, for $a \in A$. A {\em morphism} between $2$-vector spaces in the category of $\mathbb{C}[\partial]$-modules is a functor internal to the category ${\bf {Vect}^{ \mathbb{C}[\partial]}}$. Thus, a morphism $F : C \rightarrow C'$ between $2$-vector spaces in the category of $\mathbb{C}[\partial]$-modules is given by a pair $F=(F_0, F_1)$ of $\mathbb{C}[\partial]$-linear maps $F_0 : C_0 \rightarrow C'_0$ and $F_1 : C_1 \rightarrow C'_1$ commuting with all the structure maps.

The notion of $2$-vector spaces in the category of $\mathbb{C}[\partial]$-modules are closely related to $2$-term complexes of $\mathbb{C}[\partial]$-modules. Let $C=(C_{1}\rightrightarrows C_{0})$ be a $2$-vector space in the category of $\mathbb{C}[\partial]$-modules. Then $\mathrm{ker}(s)\xrightarrow{t} C_{0} $ is a $2$-term complex of $\mathbb{C}[\partial]$-modules. On the other hand, if $\mathcal{A}_{1} \xrightarrow{ \beta} \mathcal{A}_{0}$ is a complex of $\mathbb{C}[\partial]$-modules then
\begin{align}\label{2-to-2}
    C = ( \mathcal{A}_{0}\oplus \mathcal{A}_{1}\rightrightarrows \mathcal{A}_{0})
\end{align}
is a $2$-vector space in the category of $\mathbb{C}[\partial]$-modules, where the source, target and object inclusion are respectively given by 
\begin{align*}
    s(a,h) =a, \quad t(a,h)= a+ \beta h ~~ \mathrm{~and~} ~~ i(a) = (a,0).
\end{align*}
A morphism between $2$-term complexes induces a morphism between the corresponding $2$-vector spaces in the category of $\mathbb{C}[\partial]$-modules.

\begin{definition}
    An {\em associative conformal $2$-algebra} is a triple $(C,\pi,\mathbb{A})$ consisting of a $2$-vector space $C$ in the category of $\mathbb{C}[\partial]$-modules with a conformal sesquilinear functor $\pi :C\otimes C 
 \to C[\lambda]$, $a\otimes b \mapsto a _{\lambda}b$ and a conformal sesquilinear natural isomorphism (called the conformal associator)
 \begin{align*}
     \mathbb{A}_{a,b,c} : a_{\lambda}(b_{\mu}c)\longrightarrow (a_{\lambda}b)_{\lambda + \mu}c
 \end{align*}
 satisfying the following pentagonal identity:

\[
\xymatrixcolsep{0.1cm}
\xymatrix{
& & a_\lambda (b_\mu (c_\nu d)) \ar[lldd]_{ \mathbb{A}_{b,c,d} } \ar[rrdd]^{\mathbb{A}_{a. b, c_\nu d}} & &  \\
 & & & &  \\
a_\lambda ( (b_\mu c)_{\mu + \nu} d ) \ar[rdd]_{\mathbb{A}_{a, b_\mu c, d}} & & & & (a_\lambda b)_{\lambda + \mu} (c_\nu d) \ar[ldd]_{\mathbb{A}_{a_\lambda b, c , d}} \\
 & & & & \\
 & (a_\lambda (b_\mu c))_{\lambda + \mu + \nu} d \ar[rr]_{\mathbb{A}_{a, b, c}} & & ((a_\lambda b)_{\lambda + \mu} c )_{\lambda + \mu + \nu} d & 
}
\]
for all $a,b,c,d \in C_0$.
\end{definition}

\begin{definition}
Let $(C, \pi, \mathbb{A})$ and $(C', \pi', \mathbb{A}')$ be two associative conformal $2$-algebras. A {\em homomorphism} of associative conformal $2$-algebras from $(C, \pi, \mathbb{A})$ to $(C', \pi', \mathbb{A}')$ is a functor $F= (F_0 , F_1) : C \rightarrow C'$ between the underlying $2$-vector spaces in the category of $\mathbb{C}[\partial]$-modules and a conformal sesquilinear natural isomorphism
\begin{align*}
    \mathbb{F}_{a, b} : F_0(a)'_\lambda F_0(b) \rightarrow F_0 (a_\lambda b)
\end{align*}
that makes the following diagram commutative:
\[
\xymatrix{
 &  F_0(a)'_\lambda (F_0(b)'_\mu F_0(c)) \ar[ld]_{ \mathbb{A}'_{F(a), F(b), F(c)}} \ar[rd]^{   \mathbb{F}_{b, c} } &   \\
 (F_0(a)'_\lambda F_0(b))'_{\lambda + \mu} F_0(c) \ar[d]_{\mathbb{F}_{a, b}} & & F_0(a)'_\lambda F_0(b_\mu c) \ar[d]^{ \mathbb{F}_{a, b_\mu c}} \\
 F_0 (a_\lambda b)'_{\lambda + \mu} F_0 (c) \ar[rd]_{  \mathbb{F}_{a_\lambda b, c}} & & F_0 (a_\lambda (b_\mu c)) \ar[ld]^{\mathbb{A}_{a, b, c}} \\
 & F_0 ( (a_\lambda b)_{\lambda + \mu} c) &  
}
\]
for all $a, b, c \in C_0$. We denote a homomorphism as above simply by the pair $(F= (F_0, F_1), \mathbb{F})$.
\end{definition} 

Let $(C,\pi,\mathbb{A})$, $(C^{\prime},\pi^{\prime},\mathbb{A}^{\prime})$ and $(C^{\prime \prime},\pi^{\prime \prime},\mathbb{A}^{\prime \prime})$ be three associative conformal $2$-algebras. Suppose $(F = (F_0, F_1),\mathbb{F}) : (C,\pi,\mathbb{A}) \to (C^{\prime},\pi^{\prime},\mathbb{A}^{\prime})$  and $(G = (G_0, G_1), \mathbb{G}) : (C^{\prime},\pi^{\prime},\mathbb{A}^{\prime}) \to (C^{\prime \prime},\pi^{\prime \prime},\mathbb{A}^{\prime \prime})$ are homomorphisms between associative conformal $2$-algebras. Then their composition is given by $ (G \circ F = (G_0 \circ F_0, G_1 \circ F_1), \mathbb{G} \diamond \mathbb{F})$, where $\mathbb{G} \diamond \mathbb{F}$ is the ordinary composition
\begin{align*}
    (G_0 \circ F_0)(a)''_{\lambda}(G_0 \circ F_0)(b)\xrightarrow{\mathbb{G}_{F_0(a),F_0 (b)}}G_0 (F_0(a)'_{\lambda}F_0(b))\xrightarrow{\mathbb{F}_{a,b}} (G_0\circ F_0)(a_{\lambda} b), \text{ for } a, b \in C_0.
\end{align*}

For any associative conformal $2$-algebra $(C,\pi,\mathbb{A})$, the identity homomorphism is given by the identity functor $\mathrm{id}_{C}:C\to C$ with the identity natural isomorphism $\mathbbm{id}_{a, b} : a_\lambda b \rightarrow a_\lambda b$, for all $a, b \in C_0$. With the above definitions and notations, associative conformal $2$-algebras and homomorphisms between them form a category. We denote this category by {\bf conf2Alg}. We will now prove the main result of the present section. 

\begin{thm}
    The categories ${\bf 2A_{\infty}conf}$ and {\bf conf2Alg} are equivalent.
\end{thm}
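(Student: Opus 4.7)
The plan is to construct a pair of functors $\mathfrak{F} : \mathbf{conf2Alg} \to \mathbf{2A_\infty conf}$ and $\mathfrak{G} : \mathbf{2A_\infty conf} \to \mathbf{conf2Alg}$, and then exhibit natural isomorphisms $\mathfrak{F} \circ \mathfrak{G} \cong \mathrm{id}$ and $\mathfrak{G} \circ \mathfrak{F} \cong \mathrm{id}$. The core dictionary is that the conformal associator of a conformal $2$-algebra translates into the ternary higher product $\mu_3$ of a $2$-term $A_\infty$-conformal algebra, and the pentagon axiom corresponds precisely to the coherence condition (ix) of Definition \ref{definition 4.1}. The translation between the underlying $2$-vector space in the category of $\mathbb{C}[\partial]$-modules and a $2$-term complex of $\mathbb{C}[\partial]$-modules is already recorded in (\ref{2-to-2}), so the skeleton of the equivalence is built in.

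To define $\mathfrak{F}$, I would start with an associative conformal $2$-algebra $(C, \pi, \mathbb{A})$, set $\mathcal{A}_0 := C_0$, $\mathcal{A}_1 := \ker(s)$, and let $\beta := t|_{\mathcal{A}_1}$. The functor $\pi$ restricted to objects yields the first piece of $\mu_2 : \mathcal{A}_0 \otimes \mathcal{A}_0 \to \mathcal{A}_0[\lambda]$, while its action on morphisms (using the standard identification $C_1 \cong \mathcal{A}_0 \oplus \mathcal{A}_1$) produces the left and right actions $\mu_2 : \mathcal{A}_0 \otimes \mathcal{A}_1 \to \mathcal{A}_1[\lambda]$ and $\mu_2 : \mathcal{A}_1 \otimes \mathcal{A}_0 \to \mathcal{A}_1[\lambda]$. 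Functoriality of $\pi$, together with the source–target compatibility, forces identities (i)–(iv). For $\mu_3$, I read off, from the naturality of $\mathbb{A}$, a conformal sesquilinear map $\mathcal{A}_0^{\otimes 3} \to \mathcal{A}_1[\lambda_1, \lambda_2]$ by taking the difference between source and target of $\mathbb{A}_{a,b,c}$; the axiom $\beta \circ \mu_3 = (\mu_2 \diamond \mu_2)$ is exactly condition (v). Naturality of $\mathbb{A}$ in each slot gives (vi)–(viii), and the pentagon axiom yields (ix). A homomorphism $(F = (F_0, F_1), \mathbb{F})$ is sent to the triple $(f_0 = F_0, f_1 = F_1|_{\ker(s)}, f_2)$, where $f_2 : \mathcal{A}_0 \otimes \mathcal{A}_0 \to \mathcal{A}'_1[\lambda]$ is obtained from $\mathbb{F}$ by the analogous source–target subtraction.

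For the reverse functor $\mathfrak{G}$, starting from $(\mathcal{A}_1 \xrightarrow{\beta} \mathcal{A}_0, \mu_2, \mu_3)$, I would let $C = (\mathcal{A}_0 \oplus \mathcal{A}_1 \rightrightarrows \mathcal{A}_0)$ be the $2$-vector space as in (\ref{2-to-2}). Define $\pi$ on objects by $a_\lambda b := (\mu_2)_\lambda(a, b)$ and on morphisms $(a, h), (b, k)$ by
\begin{align*}
(a, h)_\lambda (b, k) := \bigl( (\mu_2)_\lambda(a, b),\, (\mu_2)_\lambda(a, k) + (\mu_2)_\lambda(h, b) + (\mu_2)_\lambda(h, \beta k) \bigr),
\end{align*}
where (ii)–(iv) guarantee this is source/target–compatible and functorial. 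The conformal associator is the morphism from $a_\lambda(b_\mu c)$ to $(a_\lambda b)_{\lambda+\mu} c$ whose $\mathcal{A}_1$-component is (up to a sign) $(\mu_3)_{\lambda,\mu}(a, b, c)$; condition (v) ensures that target minus source equals $\beta(\mu_3)$, making $\mathbb{A}_{a,b,c}$ a well-defined morphism, and (ix) gives the pentagon. Homomorphisms are sent similarly, using $f_2$ to build $\mathbb{F}$. One then checks that $\mathfrak{G} \circ \mathfrak{F}$ and $\mathfrak{F} \circ \mathfrak{G}$ are canonically isomorphic to the respective identity functors, and that the correspondence at the level of homomorphisms is compatible with the composition rule $\mathbb{G} \diamond \mathbb{F}$ on one side and the composition $g \circ f$ of $A_\infty$-homomorphisms on the other.

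The most delicate step will be the bookkeeping that converts the pentagon diagram into the precise form of condition (ix), including the correct signs produced by the Koszul convention and by reading off $\mu_3$ as a difference of endpoints of $\mathbb{A}$. Secondary care is required in showing that $\pi$ is a conformal sesquilinear functor, where the interchange law for composition of morphisms must be verified directly from (ii)–(iv); once this is in hand, the remaining verifications, and the fact that the two composite functors are naturally isomorphic to the identity, are routine diagram chases using the bijection between morphisms in $C$ and pairs in $\mathcal{A}_0 \oplus \mathcal{A}_1$.
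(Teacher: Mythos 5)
Your proposal is correct and follows essentially the same route as the paper: the same pair of functors (the paper's $S$ and $T$), the same formula for $\pi$ on morphisms (your $(\mu_2)_\lambda(h,\beta k)$ equals the paper's $(\mu_2)_\lambda(\beta h,k)$ by axiom (iv)), the same extraction of $\mu_3$ and $f_2$ as the $\ker(s)$-component of the associator and of $\mathbb{F}$, and the same observation that the pentagon corresponds to condition (ix). The only cosmetic differences are the order in which the two functors are presented and that the paper makes the natural isomorphism $S\circ T\Rightarrow \mathrm{id}$ explicit via $(a,m)\mapsto 1_a+m$, whereas you leave it as a routine check.
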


\begin{proof}
    Here we only sketch the proof. Let $(\mathcal{A}_{1}\xrightarrow{ \beta }\mathcal{A}_{0},\mu_{2},\mu_{3})$ be a $2$-term $A_{\infty}$-conformal algebra. We consider the $2$-vector space $C=( \mathcal{A}_{0} \oplus \mathcal{A}_{1}\rightrightarrows \mathcal{A}_{0})$ in the category of $\mathbb{C}[\partial]$-modules given in (\ref{2-to-2}). We now define a conformal sesquilinear functor $\pi : C\otimes C \to C[\lambda], (a, h) \otimes (b, k) \mapsto (a,h)_\lambda (b, k)$ by 
    \begin{align*}
        (a,h)_{\lambda}(b,k) = ((\mu_{2})_{\lambda}(a,b),(\mu_{2})_{\lambda}(a,k)+ (\mu_{2})_{\lambda}(h,b) + (\mu_{2})_{\lambda}(dh,k)),
    \end{align*}
    for $(a,h), (b,k) \in \mathcal{A}_{0} \oplus \mathcal{A}_{1} $. We define the conformal associator by
    \begin{align}\label{conformal-associator}
        \mathbb{A}_{a,b,c} = \big( a_{\lambda}(b_{\mu}c),(\mu_{3})_{\lambda,\mu}(a,b,c) \big),\mathrm{~for~}a,b,c\in A_{0}.
    \end{align}
    Then it follows from the condition (v) of Definition \ref{definition 4.1} that $\mathbb{A}_{a,b,c}$ is a morphism from source $a_{\lambda}(b_{\mu}c)$ to target $(a_{\lambda}b)_{\lambda+\mu}c$. It is easy to check that $\mathcal{A}_{a,b,c}$ is a natural isomorphism. Finally, a direct computation shows that the conformal associator (\ref{conformal-associator}) satisfies the pentagonal identity. Thus, we obtain an associative conformal $ 2$-algebra $( C = ( \mathcal{A}_{0} \oplus \mathcal{A}_{1}\rightrightarrows \mathcal{A}_{0}), \pi, \mathbb{A})$.

    Let $(\mathcal{A}_{1}\xrightarrow{ \beta }\mathcal{A}_{0},\mu_{2},\mu_{3})$ and $(\mathcal{A}_{1}^{\prime}\xrightarrow{\beta^{\prime}}\mathcal{A}_{0}^{\prime},\mu_{2}^{\prime},\mu_{3}^{\prime})$ be $2$-term $A_{\infty}$-conformal algebras and $(f_{0},f_{1 },f_{2})$ be a homomorphism between them. Let $(C=(\mathcal{A}_{0} \oplus \mathcal{A}_{1} \rightrightarrows \mathcal{A}_{0}), \pi,\mathbb{A})$ and $(C^{\prime}=(\mathcal{A}_{0}^{\prime}\oplus \mathcal{A}_{1}^{\prime} \rightrightarrows \mathcal{A}_{0}^{\prime}), \pi^{\prime},\mathbb{A}^{\prime})$ be the corresponding associative conformal $2$-algebras. We define a functor $F : C \to C^{\prime}$ by $F = (F_0, F_1) := (f_0 , f_0 \oplus f_1)$ and a natural isomorphism  $\mathbb{F}_{a,b}: F_0(a)'_{\lambda} F_0(b) \to F_0(a _{\lambda}b)$ by $\mathbb{F}_{a,b} = (f_{0}(a)_{\lambda}f_{0}(b), (f_{2})_\lambda (a,b)) $, for all $a, b \in \mathcal{A}_0$.
    Then it is easy to check that $(F = (F_0, F_1), \mathbb{F})$ is a homomorphism of associative conformal $2$-algebras. The above constructions gives rise to a functor $S: {\bf 2A_{\infty}conf} \to {\bf conf2Alg}$.
    
    Conversely, let $(C=(C_{1}\rightrightarrows C_{0}), \pi,\mathbb{A})$ be an associative conformal $2$-algebra. We consider the two term chain complex $\mathcal{A}_{1} = \mathrm{ker}(s) \xrightarrow{t} C_{0}= \mathcal{A}_{0}$ and define conformal sesquilinear maps $\mu_{2} : \mathcal{A}_{i}\otimes \mathcal{A}_{j} \to \mathcal{A}_{i+j}[\lambda]$ and $\mu_{3} : \mathcal{A}_{0}\otimes \mathcal{A}_{0}\otimes \mathcal{A}_{0} \to \mathcal{A}_{1}[\lambda,\mu]$ by 
    \begin{align*}
       & (\mu_{2})_{\lambda}(a,b)= a_{\lambda}b,~ (\mu_{2})_{\lambda}(a,h)= {1_{a}}_{\lambda}h,~ (\mu_{2})_{\lambda}(h,a) = h_{\lambda}{1_{a}}\\&
       (\mu_{3})_{\lambda , \mu}(a,b,c) = \mathrm{pr}_{2} \big( \mathcal{A}_{a,b,c} : a_{\lambda}(b_{\mu}c) \to (a_{\lambda}b)_{\lambda +\mu}c  \big),
    \end{align*}
   where $\mathrm{pr}_{2} : C_{0}\oplus C_{1} \to C_{1}$ is the projection onto the second factor. Then one can easily verify that $(\mathcal{A}_1 = \mathrm{ker}(s) \xrightarrow{t} C_{0} = \mathcal{A}_0, \mu_{2},\mu_{3})$ is a $2$-term $A_{\infty}$-conformal algebra.
Let $(C,\pi,\mathbb{A})$ and  $(C^{\prime},\pi^{\prime},\mathbb{A}^{\prime})$ be two associative conformal $2$-algebras and $(F = (F_0, F_1), \mathbb{F})$ be a homomorphism between them. Let $(\mathrm{ker}(s) \xrightarrow{t} C_{0}, \mu_{2},\mu_{3})$ and $(\mathrm{ker}(s^{\prime}) \xrightarrow{t^{\prime}} C_{0}^{\prime}, \mu_{2}^{\prime},\mu_{3}^{\prime})$ be the corresponding $2$-term $A_{\infty}$-conformal algebras. We define $\mathbb{C}[\partial]$-linear maps $f_{0}: C_{0} \to C_{0}^{\prime}$, $f_{1} : \mathrm{ker}(s) \to \mathrm{ker}(s^{\prime})$ and a conformal sesquilinear map $f_{2} : C_{0}\otimes C_{0} \to  \mathrm{ker}(s^{\prime})[\lambda]$ by $f_{0}=F_{0}$,~ $f_{1} = F_1\vline_{ \mathrm{ ker}(s)}$ and
 $f_{2}(a,b) = \mathbb{F}_{a,b} - 1_{ s( \mathbb{F}_{ a,b })}$, for $a, b \in C_0 = \mathcal{A}_0$.
 It is easy to see that $(f_{0},f_{1},f_{2})$ gives rise to a morphism between $2$-term $A_{\infty }$-conformal algebras. Thus, we obtain a functor $T : {\bf conf2Alg} \to {\bf 2A_{\infty}conf}$.
 

We are now left to show that there are natural isomorphisms $\Lambda: T \circ S \implies 1_{ {\bf 2A_{\infty}conf} }$ and $\Upsilon: S \circ T \implies 1_{{\bf conf2Alg} }$. Let $(\mathcal{A}_1 \xrightarrow{ \beta} \mathcal{A}_0, \mu_2, \mu_3)$ be a given $2$-term $A_\infty$-conformal algebra. If we apply $T \circ S$ to this $2$-term $A_\infty$-conformal algebra, we obtain the same $2$-term $A_\infty$-conformal algebra. Thus, $T \circ S$ is the identity functor $1_{ {\bf 2A_{\infty}conf} }$ on the category $ {\bf 2A_{\infty}conf}$. Hence we can take $\Lambda$ to be the identity natural isomorphism. Next, let $(C = (C_1 \rightrightarrows C_0), \pi, \mathbb{A})$ be an associative conformal $2$-algebra. If we apply $T$, we obtain the $2$-term $A_\infty$-conformal algebra $(\mathrm{ker}(s) \xrightarrow{t} C_0, \mu_2, \mu_3)$. Further, applying $S$, we obtain the associative conformal $2$-algebra $\big( C' = (C_0 \oplus \mathrm{ker}(s) \rightrightarrows C_0), \pi', \mathbb{A}'  \big)$. We define a functor $\Upsilon_C = \big(  (\Upsilon_C)_0, (\Upsilon_C)_1  \big) : C' \rightarrow C$ by
\begin{align*}
    (\Upsilon_C)_0 (a) = a ~~~~ \text{ and } ~~~~ (\Upsilon_C)_1 (a,m) = 1_a + m, \text{ for } a \in C_0 \text{ and } (a, m) \in C_0 \oplus \mathrm{ker}(s).
\end{align*}
We also take the identity natural isomorphism $(\mathbbm{id}_C)_{a,b} : (\Upsilon_C)_0(a)_\lambda (\Upsilon_C)_0 (b) \rightarrow (\Upsilon_C)_0 (a'_\lambda b) = (\Upsilon_C)_0 (a_\lambda b)$, for $a, b \in C_0$. It can be checked that $\big( \Upsilon_C = (  (\Upsilon_C)_0, (\Upsilon_C)_1  ), \mathbbm{id}_C  \big)$ is an isomorphism of associative conformal $2$-algebras from $C'$ to $C$. This construction yields a natural isomorphism $\Upsilon : S \circ T  \implies 1_{{\bf conf2Alg} }$. Hence the proof follows.
\end{proof}

\section{Homotopy Lie conformal algebras} \label{section-6}
In this section, we first recall Lie conformal algebras. Then we introduce and study $L_{\infty}$-conformal algebras which are the homotopy analogue of Lie conformal algebras. We show that a suitable skew-symmetrization of an $A_{\infty}$-conformal algebra gives rise to an $L_{\infty}$-conformal algebra structure.

\begin{definition}
    A {\em Lie conformal algebra} is a pair $(L,[\cdot_\lambda \cdot])$ consisting of a $\mathbb{C}[\partial]$-module $L$ with a $\mathbb{C}$-linear map (called the $\lambda$-bracket)
       $ [\cdot_\lambda \cdot] : L\otimes L \to L[\lambda], ~ x\otimes y \mapsto [x_{\lambda} y ] $
    satisfying the conformal sesquilinearity condition: 
    \begin{align*}
     [\partial x_{\lambda} y] = - \lambda [x _{\lambda}y] ~~ \text{ and } ~~ [x_{\lambda}\partial y] = (\lambda + \partial )[x_{\lambda} y]
\end{align*}
and the following identities:
     
    - skew-symmetry: $[x_{\lambda} y] = -[y_{-\lambda -\partial} x]$,
    
    - conformal Jacobi identity: $[x_{\lambda}[y_{\mu} z]] = [[x_{\lambda} y]_{\lambda + \mu} z] + [y_{\mu}[x_{\lambda} z]]$.
    \end{definition}

\medskip
    
    Let $L = \mathbb{C}[\partial]l$ be the $\mathbb{C}[\partial]$-module with one free generator $l$. Define the $\lambda$-bracket $[\cdot_\lambda \cdot ] : L \otimes L \to L[\lambda]$ by 
    \begin{align*}
        [l _{\lambda} l] = (\partial + 2\lambda)l.
    \end{align*}
    Then $(L,[\cdot_\lambda \cdot])$ is a Lie conformal algebra, called the Virasoro Lie conformal algebra and denoted by Vir.

\medskip
    
    Let $(\mathfrak{g}, [~,~])$ be a Lie algebra. The current Lie conformal algebra associated to $\mathfrak{g}$ is given by $\mathrm{Cur~}\mathfrak{g} = \mathbb{C}[\partial]\otimes\mathfrak{g}$ with the $\lambda$-bracket
    \begin{align*}
        [x_{\lambda} y] := [x,y], \mathrm{~for~ all ~} x,y \in \mathfrak{g}.
    \end{align*}
    It has been observed by D$'$Andrea and Kac \cite{andrea} that the Virasoro Lie conformal algebra and current Lie conformal algebras $\mathrm{Cur~}\mathfrak{g}$ (where $\mathfrak{g}$ is finite-dimensional simple Lie algebra) exhaust all finite simple Lie conformal algebras.

\begin{remark} \label{skew-rem}(\cite{bakalov-kac-voronov}) Let $(A, \cdot_\lambda \cdot)$ be an associative conformal algebra. Then the $\mathbb{C}[\partial]$-module $A$ equipped with the $\lambda$-bracket $[a_\lambda b] := a_\lambda b - b_{-\partial - \lambda} a$, for $a, b \in A$, is a Lie conformal algebra. This is called the skew-symmetrization of the associative conformal algebra $(A, \cdot_\lambda \cdot)$.
\end{remark}
    
    \begin{definition}
        Let $(L,[\cdot_\lambda \cdot])$ be a Lie conformal algebra. A {\em conformal $L$-module} is a pair $(M, \cdot_\lambda \cdot)$ in which $M$ is a $\mathbb{C}[\partial]$-module and 
        \begin{align*}
            \cdot_\lambda \cdot : L \otimes M \to M[\lambda], ~ x \otimes v \mapsto x_{\lambda} v
        \end{align*}
        is a $\mathbb{C}$-linear map satisfying the following properties: for all $x,y \in L$ and $v \in M$,
        \begin{align*}
           & (\partial x)_{\lambda} v = -\lambda (x_{\lambda} v), ~~~ x_{\lambda}(\partial v) = (\lambda + \partial )x_{\lambda} v,\\&
           x_{\lambda}(y_{\mu}v)-y_{\mu}(x_{\lambda} v) = [x_{\lambda} y ]_{\lambda +\mu}v.
        \end{align*}
        \end{definition}
        It follows that any Lie conformal algebra is a conformal module over itself, called the adjoint module.

\medskip
        
In the following, we recall the graded Lie algebra whose Maurer-Cartan elements correspond to Lie conformal algebra structures on a given $\mathbb{C}[\partial]$-module \cite{sole-kac2,wu} (see also \cite{yuan-liu}). Let $L$ be a $\mathbb{C}[\partial]$-module. A conformal sesquilinear map 
\begin{align*}
\varphi: L^{\otimes n} \to L[\lambda_{1},\ldots,\lambda_{n-1}], ~x_1 \otimes \cdots \otimes x_n \mapsto \varphi_{\lambda_1, \ldots, \lambda_{n-1}} (x_1, \ldots, x_n)
\end{align*}
is said to be {\em skew-symmetric} \cite{sole-kac} if it is skew-symmetric with respect to simultaneous permutations of the
$x_i$'s and the $\lambda_i$'s in the sense that, for any permutation $\sigma \in \mathbb{S}_n$, we have
\begin{align}\label{skew-define}
    \varphi_{\lambda_{1},\ldots,\lambda_{n-1}}(x_{1},\ldots,x_{n}) = \mathrm{sgn} (\sigma)~ \varphi_{\lambda_{\sigma (1)},\ldots,\lambda_{\sigma(n-1)}}(x_{\sigma(1)},\ldots,x_{\sigma (n-1)},x_{\sigma (n)}) \big|_{\lambda_n \mapsto \lambda_n^\dagger}.
\end{align}
for all $x_{1},\ldots,x_{n} \in L$. 
The notation on the right-hand side of (\ref{skew-define}) simply means that $\lambda_n$ is replaced by $\lambda_n^\dagger = - \sum_{i=1}^{n-1} \lambda_i - \partial$, if it occurs and $\partial$ is moved to the left.
We denote the set of all such skew-symmetric conformal sesquilinear maps by $\mathrm{Hom}_{cs}(\wedge ^{n}L, L[\lambda_{1},\ldots,\lambda_{n-1}])$. Define a graded vector space 
\begin{align*}
\mathfrak{g} = {\oplus}_{n \geq 1}\mathrm{Hom}_{cs}(\wedge ^{n}L,L[\lambda_{1},\ldots,\lambda_{n-1}]).
\end{align*}
For any $\varphi \in \mathrm{Hom}_{cs}(\wedge ^{n}L,L[\lambda_{1},\ldots,\lambda_{n-1}])$ and $\psi \in \mathrm{Hom}_{cs}(\wedge^{m}L,L[\lambda_{1},\ldots,\lambda_{m-1}])$, we define a bracket $[\varphi, \psi]_{\mathrm{CNR}} \in \mathrm{Hom}_{cs}(\wedge ^{m+n-1}L,L[\lambda_{1},\ldots,\lambda_{m+n-2}])$ by 
\begin{align*}
    [\varphi, \psi]_{\mathrm{CNR}} = \varphi \diamond \psi -(-)^{(m-1)(n-1)}\psi \diamond \varphi, \mathrm{~where}
\end{align*}
\begin{align}\label{cnr-p}
(\varphi \diamond \psi &)_{\lambda_{1},\ldots,\lambda_{m+n-2}}(x_{1},\ldots,x_{m+n-1})
=\sum_{\sigma \in \mathrm{Sh}(m,n-1)} \mathrm{sgn}{(\sigma)}~ \\ &\varphi_{ \lambda_\sigma^\dagger,\lambda_{\sigma(m+1)},\ldots,\lambda_{\sigma(m+n-2)}}(\psi_{\lambda_{\sigma(1)},\ldots,\lambda_{\sigma(m-1)}}(x_{\sigma(1)},\ldots,x_{\sigma(m)}),x_{\sigma(m+1)},\ldots, x_{\sigma(m+n-1)} ), \nonumber
\end{align}   
for $x_1, \ldots, x_{m+n-1} \in L$. Note that $\sigma \in \mathrm{Sh} (m,n-1)$ implies that $\sigma (m) = m+n-1$ or $\sigma (m+n-1) = m+n-1$. If $\sigma (m) = m+n-1$, we use the notation $\lambda_\sigma^\dagger = - \partial - (  \lambda_{\sigma (m+1)} + \cdots + \lambda_{\sigma (m+n-1)})$ and if $\sigma (m+n-1) = m+n-1$, we use the notation $\lambda_\sigma^\dagger = \lambda_{\sigma (1)} + \cdots + \lambda_{\sigma (m)}$ in the above expression (\ref{cnr-p}).
With this, it is not hard to verify that the product $\diamond$ makes the graded vector space $\mathfrak{g}$ into a degree $-1$ graded pre-Lie algebra. Hence the bracket $[~,~]_{\mathrm{CNR}}$ is a degree $-1$ graded Lie bracket on $\mathfrak{g}$. In other words, the shifted graded vector space $\mathfrak{g}[1] = {\oplus}_{n \geq 0}\mathrm{Hom}_{cs}(\wedge ^{n+1}L,L[\lambda_{1},\ldots,\lambda_{n}])$ is a graded Lie algebra. For $\pi \in \mathrm{Hom}_{cs}(\wedge ^{2}L,L[\lambda])$, we have 
\begin{align*}
    ([\pi, \pi]_{\mathrm{CNR}})_{\lambda, \mu}(x,y,z) = 2(\pi_{\lambda+ \mu}(\pi_{\lambda}(x,y),z)-\pi_{- \partial - \mu}(\pi_{\lambda}(x,z),y)+ \pi_{ - \partial - \lambda}(\pi_{\mu}(y,z),x)),
\end{align*}
for $x,y,z \in L$. This shows that $\pi$ is a Maurer-Cartan element in the graded Lie algebra $(\mathfrak{g}[1],[~,~]_{\mathrm{CNR}})$ if and only if the $\lambda$-bracket 
$[\cdot_\lambda \cdot] : L \otimes L \to L[\lambda], ~ [x _{\lambda}y] := \pi_{\lambda}(x,y)$ defines a Lie conformal algebra structure on $L$. This characterization allows us to express the known coboundary operator of a Lie conformal algebra (\cite{sole-kac}) in terms of the graded Lie bracket $[~,~]_\mathrm{CNR}$.

Let $(L, [\cdot_\lambda \cdot ])$ be a Lie conformal algebra with the Maurer-Cartan element $\pi \in \mathrm{Hom}_{cs} (\wedge^2L, L [\lambda])$ in the graded Lie algebra $(\mathfrak{g}[1], [~,~]_\mathrm{CNR})$. For each $n \geq 0$, we define the $n$-th cochain group $C^{n}(L,L)$ by 
\begin{equation*}
C^{n}(L,L)=
    \begin{cases}
        L/{\partial L} & \text{if  } n = 0,\\
        \mathrm{Hom}_{cs}(\wedge^{ n}L, L[\lambda_{1},\ldots,\lambda_{n-1}]) & \text{if  }  n \geq 1.
    \end{cases}
\end{equation*} 
Then there is a map $\delta_\pi : C^{n}(L,L) \to C^{n+1}(L,L)$, for $n \geq 0$, given by 
\begin{align*}
    \delta_\pi (x + \partial L )(y) =~& [y_{-\lambda - \partial} x] \big|_{\lambda = 0}, \mathrm{~for ~} x+\partial L \in {L}/{\partial L},\\
    \delta_\pi (\varphi) =~& (-1)^{n-1} [\pi, \varphi]_{\mathrm{CNR}}, \mathrm{~for~} \varphi \in C^{n \geq 1}(L,L).
\end{align*}
Since $[\pi,\pi]_{\mathrm{CNR}} = 0$, it follows that $(\delta_\pi)^{2} = 0$. In other words, $\{C^{\bullet}(L,L),\delta_\pi \}$ is a cochain complex. The corresponding cohomology groups are called the {\em cohomology} of the Lie conformal algebra $(L,[\cdot_\lambda \cdot])$ with coefficients in the adjoint module.

The above construction can be easily generalized when we have an arbitrary conformal module. Let $(L,[\cdot_\lambda \cdot])$ be  a Lie conformal algebra and $(M, \cdot_\lambda \cdot)$ be any conformal module over it. Consider the $\mathbb{C}[\partial]$-module $L \oplus M$ with the $\lambda$-bracket $[\cdot_\lambda \cdot]^\ltimes : (L \oplus M) \otimes (L \oplus M) \rightarrow (L \oplus M)[\lambda]$ given by
\begin{align*}
    [(x, u)_\lambda (y, v)]^\ltimes := \big(   [x_\lambda y], x_\lambda v - y_{- \partial - \lambda} u \big), \text{ for } (x, u), (y, v) \in L \oplus M.
\end{align*}
This is a Lie conformal algebra, called the {\em semidirect product}. Let $\pi_\ltimes \in \mathrm{Hom}_{cs} \big(\wedge^2 (L \oplus M), (L \oplus M)[\lambda] \big)$ be the Maurer-Cartan element corresponding to the semidirect product. Then $\pi_\ltimes$ yields the coboundary operator $\delta_{\pi_\lambda}$ of the cochain complex $\{ C^\bullet (L \oplus M, L \oplus M), \delta_{\pi_\lambda} \}$ defining the cohomology of the semidirect product Lie conformal algebra with coefficients in the adjoint module. For each $n \geq 0$, we consider a space $C^n(L, M)$ by
\begin{align*}
C^{0}(L,M) = {M}/{\partial M} \quad \mathrm{and} \quad C^{n \geq 1}(L,M) = \mathrm{Hom}_{cs}(\wedge^{ n}L, M[\lambda_{1},\ldots,\lambda_{n-1}]).
\end{align*}
It is easy to see that the coboundary map $\delta_{\pi_\ltimes}$ restricts to a coboundary map (denoted by $\delta_\pi$) $\delta_\pi : C^{n}(L,M) \rightarrow C^{n+1}(L,M)$. In other words, $\{ C^\bullet (L, M), \delta_\pi \}$ is a cochain complex. The map $\delta_\pi$ is explicitly  given by $\delta_\pi (v + \partial M)(x) = (x_{-\lambda - \partial } v) \big|_{\lambda = 0}$, for $ v + \partial M  \in C^{0}(L,M)$, and
\begin{align*}
   & (\delta_\pi \varphi)_{\lambda_{1},\ldots,\lambda_{n}}(x_{1},\ldots,x_{n+1}) \\
   &=\sum_{i = 1}^{n}(-1)^{i+1}~{x_{i}}_{\lambda_{i}} \varphi_{\lambda_{1},\ldots,\widehat{\lambda_{i}},\ldots,\lambda_{n}}(x_{1},\ldots,\widehat{x_{i}},\ldots,x_{n+1}) \\
   &+ (-1)^n ~ {x_{n+1}}_{ - \partial - \sum_{l=1}^n \lambda_l } \varphi_{\lambda_1, \ldots, \lambda_{n-1}} (x_1, \ldots, x_n) \\
   &+\sum_{1 \leq i < j \leq n }(-1)^{i+j} ~ \varphi_{- \partial - \sum_{\substack{l=1\\ l \neq i, j}}^n \lambda_l , \lambda_1,\ldots,\widehat{\lambda_{i}},\ldots,\widehat{\lambda_{j}},\ldots,\lambda_{n}}([{x_{i}}_{\lambda_{i}} x_{j}],x_{1},\ldots,\widehat{x_{i}},\ldots,\widehat{x_{j}},\ldots,x_{n+1}) \\
  & + \sum_{i=1}^n (-1)^{i+n+1} ~\varphi_{ - \partial - \sum_{ \substack{l=1 \\ l \neq i}}^n \lambda_l   , \lambda_1, \ldots, \widehat{\lambda_i}, \ldots, \lambda_{n-1}} \big( [{x_i}_{\lambda_i} x_{n+1}], x_1, \ldots, \widehat{x_i}, \ldots, x_n  \big),
\end{align*}
for $ \varphi \in C^{n}(L,M)$ and $x_{1}, \ldots , x_{n+1} \in L$. This is precisely the one given in \cite{sole-kac}. The cohomology groups of the cochain complex $\{C^{\bullet}(L,M),\delta_\pi \}$ are called the {\em cohomology} of the Lie conformal algebra $L$ with coefficients in the conformal $L$-module $M$.

\medskip

In the following, we introduce $L_{\infty}$-conformal algebras and find a relation with $A_{\infty}$-conformal algebras introduced earlier. We first recall the notion of $L_{\infty}$-algebras.
\begin{definition} (\cite{lada-markl,lada-stasheff})
    An {\em $L_{\infty}$-algebra} is a pair $(\mathcal{L}, \{l_{k}\}_{k \geq 1})$ consisting of a graded vector space $\mathcal{L} = \oplus_{i \in \mathbb{Z}} \mathcal{L}_{i}$ equipped with a collection of graded linear maps $\{l_{k} : \mathcal{L}^{\otimes k} \to \mathcal{L}\}_{k \geq 1}$ with $\mathrm{deg}(l_{k}) = k-2$ for $k \geq 1$, that satisfies the following conditions:
    
        - each $l_{k}$ is graded skew-symmetric in the sense that 
        \begin{align*}
            l_{k}(x_{1},\ldots,x_{k}) = \mathrm{sgn}(\sigma) \epsilon(\sigma) ~ l_{k}(x_{\sigma(1)},\ldots,x_{\sigma(k)}), \mathrm{~for ~all ~} \sigma \in \mathbb{S}_{k},
        \end{align*}
        where $\epsilon(\sigma)$ is the Koszul sign that appears in the graded context, 
        
        - higher Jacobi identities: for $n \geq 1$ and homogeneous elements $x_{1},\ldots,x_{n} \in \mathcal{L}$, 
        \begin{align*}
            \sum_{p+q = n+1}\sum_{\sigma \in \mathrm{Sh}(q,n-q)}\mathrm{sgn}(\sigma) \epsilon(\sigma)(-1)^{q(p-1)} ~l_{p}(l_{q}(x_{\sigma(1)},\ldots,x_{\sigma(q)}),x_{\sigma(q+1)},\ldots,x_{\sigma(n)}) = 0.
        \end{align*}
\end{definition}

\medskip

Any Lie algebras, graded Lie algebras and differential graded Lie algebras are particular cases of $L_{\infty}$-algebras. More precisely, a differential graded Lie algebra $(\mathfrak{g},[~,~],d)$ is an $L_{\infty}$-algebra $(\mathfrak{g},\{l_{k}\}_{k \geq 1})$, where $l_{1} = d,~ l_{2} = [~,~] $ and $l_{k}=0$ for $k \geq 3$.
\begin{definition}
    An {\em $L_{\infty}$-conformal algebra} (also called a {\em strongly homotopy Lie conformal algebra}) is a graded $\mathbb{C}[\partial]$-module $\mathcal{L} = \oplus_{i \in \mathbb{Z}} \mathcal{L}_{i}$ with a collection $\{l_{k} : \mathcal{L}^{\otimes k} \to \mathcal{L}[\lambda_{1},\ldots, \lambda_{k-1}]\}_{k \geq 1}$ of graded $\mathbb{C}$-linear maps (called the $\lambda$-multiplications) with $\mathrm{deg}(l_{k}) = k-2$ for $k \geq 1$, subject to the following conditions:
    
    - each $l_{k}$ is conformal sesquilinear,
    
    - each $l_{k}$ is graded skew-symmetric in the sense that 

\begin{align*}
    (l_k)_{\lambda_{1},\ldots,\lambda_{k-1}}(x_{1},\ldots,x_{k}) = \mathrm{sgn} (\sigma) \epsilon (\sigma)~ (l_k)_{\lambda_{\sigma (1)},\ldots,\lambda_{\sigma(k-1)}}(x_{\sigma(1)},\ldots,x_{\sigma (k-1)},x_{\sigma (k)}) \big|_{\lambda_k \mapsto \lambda_k^\dagger}, \text{ for } \sigma \in \mathbb{S}_k,
\end{align*}
    
    - higher conformal Jacobi idntities: for any $n \geq 1$ and homogeneous $x_1, \ldots, x_n \in \mathcal{L}$,
     \begin{align}
            \sum_{p+q = n+1} & \sum_{\sigma \in \mathrm{Sh}(q,n-q)}\mathrm{sgn}(\sigma) \epsilon(\sigma)(-1)^{q(p-1)} \\
           & (l_{p})_{  \lambda^\dagger_\sigma ,  \lambda_{\sigma (q+1)}, \ldots, \lambda_{\sigma (n-1)}  } \big(  (l_{q})_{ \lambda_{\sigma (1)}, \ldots, \lambda_{\sigma (q-1)}} (x_{\sigma(1)},\ldots,x_{\sigma(q)}),x_{\sigma(q+1)},\ldots,x_{\sigma(n)} \big) = 0. \nonumber
        \end{align}
        Note that $\sigma \in \mathrm{Sh}(q, n-q)$ implies that either $\sigma (q) = n$ or $\sigma (n) = n$. When $\sigma (q) = n$, we use the notation $\lambda_\sigma^\dagger =  - \partial - (\lambda_{\sigma (q+1)} + \cdots + \lambda_{\sigma (n)})$ and when $\sigma (n) = n$, we use the notation $\lambda_\sigma^\dagger = \lambda_{\sigma (1)} + \cdots + \lambda_{\sigma (q)}$ in the above identities.
    \end{definition}

    \medskip

Lie conformal algebras and (differential) graded Lie conformal algebras are examples of $L_{\infty}$-conformal algebras. The notion of $2$-term $L_\infty$-conformal algebras considered in \cite{tao} and examples therein are particular case of $L_\infty$-conformal algebras defined above. Similar to $A_\infty$-conformal algebras, the concept of $L_{\infty}$-conformal algebras has a simple interpretation when we shift the degree. We first consider the following definition.
\begin{definition}
    An {\em $L_{\infty}[1]$-conformal algebera} is a pair $(\mathcal{W},\{\varrho_{k}\}_{k \geq 1})$ that consists of a graded $\mathbb{C}[\partial]$-module $\mathcal{W} = \oplus_{i \in \mathbb{Z}}\mathcal{W}_{i}$ with a collection $\{\varrho_{k} : \mathcal{W}^{\otimes k} \to \mathcal{W}[\lambda_{1}, \ldots , \lambda_{k-1}]\}_{k \geq 1}$ of graded $\mathbb{C}$-linear maps with deg$(\varrho_{k}) = -1$ subject to the following conditions:
    
    - each $\varrho_{k}$ is conformal sesquilinear,
    
    - each $\varrho_{k}$ is graded symmetric, i.e. 
    \begin{align}
    (\varrho_k)_{\lambda_{1},\ldots,\lambda_{k-1}}(x_{1},\ldots,x_{k}) = \epsilon (\sigma)~ (\varrho_k)_{\lambda_{\sigma (1)},\ldots,\lambda_{\sigma(k-1)}}(x_{\sigma(1)},\ldots,x_{\sigma (k-1)},x_{\sigma (k)}) \big|_{\lambda_k \mapsto \lambda_k^\dagger}, \text{ for } \sigma \in \mathbb{S}_k,
\end{align}
        
    - shifted higher conformal Jacobi identities: for any $n \geq 1 $ and homogeneous elements $ w_{1}, \ldots , w_{n} \in \mathcal{W}$,
\begin{align*}
            \sum_{p+q = n+1} & \sum_{\sigma \in \mathrm{Sh}(q,n-q)} \epsilon(\sigma) \\
           & (\varrho_{p})_{  \lambda_\sigma^\dagger ,  \lambda_{\sigma (q+1)}, \ldots, \lambda_{\sigma (n-1)}  } \big(  (\varrho_{q})_{ \lambda_{\sigma (1)}, \ldots, \lambda_{\sigma (q-1)}} (x_{\sigma(1)},\ldots,x_{\sigma(q)}),x_{\sigma(q+1)},\ldots,x_{\sigma(n)} \big) = 0.
        \end{align*}
\end{definition}

\medskip

The concept of $L_\infty [1]$-conformal algebra is the conformal analogue of an $L_\infty[1]$-algebra considered by Lada and Markl \cite{lada-markl}. The following result is a straightforward generalization of a result proved in \cite{lada-markl}.

\begin{proposition}
    Let $\mathcal{L}$ be a graded $\mathbb{C}[\partial]$-module. Then an $L_{\infty}$-conformal algebra structure on $\mathcal{L}$ is equivalent to having an $L_{\infty}[1]$-conformal algebra structure on the graded $\mathbb{C}[\partial]$-module $\mathcal{L}[-1]$. Explicitly, $(\mathcal{L}, \{l_{k}\}_{k \geq 1})$ is an $L_{\infty}$-conformal algebra if and only if $(\mathcal{L}[-1], \{\varrho_{k}\}_{k \geq 1})$ is an $L_{\infty}[1]$-conformal algebra, where 
    \begin{align*}
    \varrho_{k} = (-1)^{\frac{k(k-1)}{2}} ~s \circ l_{k} \circ (s^{-1})^{\otimes k}, ~\text{ for } k \geq 1.
    \end{align*}
\end{proposition}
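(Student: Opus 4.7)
The plan is to mimic, with straightforward modifications, the proof strategy of Proposition \ref{proposition 3.6} (the associative analogue). The heart of the argument is a careful sign-chasing showing that the Koszul signs produced by the degree $+1$ suspension $s : \mathcal{L} \to \mathcal{L}[-1]$ and its inverse $s^{-1}$ convert graded skew-symmetry into graded symmetry and absorb precisely the sign $(-1)^{q(p-1)}$ that distinguishes the higher conformal Jacobi identities of the $L_\infty$-conformal and $L_\infty[1]$-conformal versions.

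First I would verify that the proposed maps have the correct degree. Since $s$ has degree $+1$, $s^{-1}$ has degree $-1$ and $l_k$ has degree $k-2$, the map $\varrho_k = (-1)^{k(k-1)/2} s \circ l_k \circ (s^{-1})^{\otimes k}$ has degree $1 + (k-2) + k(-1) = -1$, as required. Also, because $s$ and $s^{-1}$ are $\mathbb{C}[\partial]$-linear and the $\lambda$-variables are untouched by the suspension, the conformal sesquilinearity of $l_k$ is equivalent to that of $\varrho_k$. So the only substantive work is to show that the symmetry conditions and the higher Jacobi identities on both sides match up.

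Next I would handle the symmetry condition. For a permutation $\sigma \in \mathbb{S}_k$ acting on homogeneous inputs $x_1, \ldots, x_k$, passing the desuspensions $s^{-1}$ through the $x_i$'s produces a Koszul sign $\epsilon'(\sigma)$ computed with respect to the shifted degrees $|x_i|-1$, while the statement for $l_k$ uses Koszul signs $\epsilon(\sigma)$ computed with respect to $|x_i|$. The standard combinatorial identity $\epsilon'(\sigma) = \mathrm{sgn}(\sigma) \epsilon(\sigma)$ (see \cite{lada-markl}) immediately identifies graded skew-symmetry of $l_k$ with graded symmetry of $\varrho_k$; the overall factor $(-1)^{k(k-1)/2}$ in the definition of $\varrho_k$ is exactly what is needed so that the corresponding $\lambda$-variables transform correctly under the dagger substitution $\lambda_k \mapsto \lambda_k^{\dagger}$, matching the two skew/symmetry conventions.

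Finally I would verify the higher conformal Jacobi identities term by term. Expanding
\[
(\varrho_p)_{\lambda_\sigma^\dagger, \lambda_{\sigma(q+1)}, \ldots, \lambda_{\sigma(n-1)}} \bigl( (\varrho_q)_{\lambda_{\sigma(1)}, \ldots, \lambda_{\sigma(q-1)}} (x_{\sigma(1)}, \ldots, x_{\sigma(q)}), x_{\sigma(q+1)}, \ldots, x_{\sigma(n)}\bigr)
\]
using the defining formula $\varrho_k = (-1)^{k(k-1)/2} s \circ l_k \circ (s^{-1})^{\otimes k}$ and commuting the suspensions across the inputs produces, by the same combinatorial accounting as in the classical (non-conformal) $L_\infty$ case, exactly a factor $\mathrm{sgn}(\sigma) \epsilon(\sigma)(-1)^{q(p-1)}$ times the corresponding term in the $L_\infty$-conformal higher Jacobi identity, pre- and post-composed with $s^{-1}$ and $s$. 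Thus the shifted Jacobi identities of the $\varrho_k$'s are equivalent to the higher conformal Jacobi identities for the $l_k$'s. The harder aspect of this step is purely bookkeeping: one must confirm that the $\lambda_\sigma^\dagger$ conventions in the two definitions are consistent under the transposition of $s$ and $s^{-1}$; since these operators do not interact with the $\lambda$-variables, this consistency is automatic and the sign identity $(k(k-1)/2 + l(l-1)/2) + (k+l-1)(k+l-2)/2 \equiv (k-1)(l-1) \pmod{2}$ (with $p=k$, $q=l$) provides the final reconciliation. This completes the equivalence, and the main obstacle is verifying the sign identity above cleanly without getting lost in the combinatorics of the shuffle sum.
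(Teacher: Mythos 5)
Your proposal is correct and follows exactly the route the paper intends: the paper gives no written proof, merely asserting that the statement is a straightforward generalization of the Lada--Markl d\'ecalage argument, and your sketch is precisely that argument (degree count, $\mathrm{sgn}(\sigma)$ absorbed into the shifted Koszul sign, and the parity identity $\tfrac{p(p-1)}{2}+\tfrac{q(q-1)}{2}+\tfrac{(p+q-1)(p+q-2)}{2}\equiv (p-1)(q-1) \pmod 2$ reconciling the prefactors) adapted to carry the $\lambda$-variables and the $\lambda_k\mapsto\lambda_k^\dagger$ convention along, which is harmless since $s$ and $s^{-1}$ do not interact with them.
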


\medskip

Let $\mathcal{W} = \oplus_{i \in \mathbb{Z}}\mathcal{W}_{i}$ be a graded $\mathbb{C}[\partial]$-module. Let $n \in \mathbb{Z}$ be an integer. For each $k \geq 1$, let 
\begin{align*}
\mathrm{Hom}^{n}_{cs}(S^{k}\mathcal{W}, \mathcal{W}[\lambda_{1},\ldots,\lambda_{k-1}])
\end{align*}
be the set of all conformal sesquilinear maps from $\mathcal{W}^{\otimes k}$ to $\mathcal{W}[\lambda_{1},\ldots,\lambda_{k-1}]$ that are graded symmetric and of degree $n$. That is,
\begin{align*}
   \mathrm{Hom}^{n}_{cs}(S^{k}\mathcal{W}, \mathcal{W}[\lambda_{1},\ldots,\lambda_{k-1}]) = \{\varphi \in \mathrm{Hom}^{n}_{cs}(\mathcal{W}^{\otimes k}, \mathcal{W}[\lambda_{1},\ldots,\lambda_{k-1}]) ~| ~ \varphi \mathrm{~is ~graded~symmetric}\} .
\end{align*}
Define $C^{\bullet}_{cs}(S\mathcal{W}) = \oplus_{n \in \mathbb{Z}}C^{n}_{cs}(S\mathcal{W})$, where
$C^{n}_{cs}(S\mathcal{W}) = \oplus_{k \geq 1}\mathrm{Hom}^{n}_{cs}(S^{k}\mathcal{W}, \mathcal{W}[\lambda_{1},\ldots,\lambda_{k-1}]).$
Thus, an element $\varrho \in C^{n}_{cs}(S\mathcal{W})$ is of the form $\varrho = \sum_{k \geq 1}\varrho_{k}$, where $\varrho_{k} \in \mathrm{Hom}^{n}_{cs}(S^{k}\mathcal{W}, \mathcal{W}[\lambda_{1},\ldots,\lambda_{k-1}])$ for all $k \geq 1$. For $\varrho = \sum_{k \geq 1}\varrho_{k} \in C^{n}_{cs}(S\mathcal{W})$ and $\tau = \sum_{l \geq 1} \tau_{l} \in C^{m}_{cs}(S\mathcal{W})$, we define
\begin{align}\label{new-gla}
    \{\![\sum_{k \geq 1}\varrho_{k}, \sum_{l \geq 1}\tau_{l}]\!\} = \sum_{p \geq 1} \sum_{k +l = p+1} (\varrho_{k} ~\overline{\diamond}~ \tau_{l} - (-1)^{mn}~\tau_{l} ~\overline{\diamond}~ \varrho_{k}),
\end{align}
where 
\begin{align*}
    (\varrho_{k} &~\overline{\diamond}~  \tau_{l})_{\lambda_{1}, \ldots, \lambda_{p-1}}(w_{1},\ldots,w_{p}) \\
    &= \sum_{\sigma \in \mathrm{Sh} (l, k-1)} \epsilon (\sigma)~ (\varrho_{k})_{  \lambda_\sigma^\dagger ,  \lambda_{\sigma (l+1)}, \ldots, \lambda_{\sigma (p-1)}  } \big( (\tau_{l})_{\lambda_{\sigma (1)}, \ldots, \lambda_{\sigma (l-1)}} (w_{\sigma(1)},\ldots,w_{\sigma(l)}),w_{\sigma(l+1)},\ldots,w_{\sigma(p)} \big).
\end{align*}
Then $(C^{\bullet}_{cs}(S\mathcal{W}),\{\![ ~, ~]\!\})$ is a graded Lie algebra. Moreover, we have the following result.
\begin{thm}
    Let $\mathcal{L}$ be a graded $\mathbb{C}[\partial]$-module. Then an $L_{\infty}$-conformal algebra structure on $\mathcal{L}$ is equivalent to the existence of a Maurer-Cartan element in the graded Lie algebra  $(C^{\bullet}_{cs}(S\mathcal{W}),\{\![ ~, ~]\!\})$, where $\mathcal{W} = \mathcal{L}[-1]$.
\end{thm}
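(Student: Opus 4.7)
The plan is to follow the same two-step strategy used in the proof of Theorem \ref{Theorem 1} for the $A_\infty$-conformal case. First, invoke the preceding proposition (the degree-shift equivalence) to reduce the statement to showing that $L_\infty[1]$-conformal algebra structures on $\mathcal{W} = \mathcal{L}[-1]$ correspond bijectively to Maurer-Cartan elements of $(C^\bullet_{cs}(S\mathcal{W}), \{\![~,~]\!\})$. By definition, such a structure is a family $\{\varrho_k\}_{k \geq 1}$ of conformal sesquilinear, graded symmetric, degree $-1$ maps $\varrho_k : \mathcal{W}^{\otimes k} \to \mathcal{W}[\lambda_1, \ldots, \lambda_{k-1}]$, which is precisely the data of a degree $-1$ element $\varrho = \sum_{k \geq 1} \varrho_k$ of $C^{\bullet}_{cs}(S\mathcal{W})$. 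Conversely, membership in $\mathrm{Hom}^{-1}_{cs}(S^k \mathcal{W}, \mathcal{W}[\lambda_1, \ldots, \lambda_{k-1}])$ forces the graded symmetry and conformal sesquilinearity automatically.

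The heart of the proof is then a direct computation of $\{\![\varrho, \varrho]\!\}$ using (\ref{new-gla}). Since both copies of $\varrho$ have degree $-1$, we have $(-1)^{mn} = -1$, so the two $\overline{\diamond}$ terms reinforce rather than cancel, yielding
\begin{align*}
\{\![\varrho, \varrho]\!\} \;=\; 2 \sum_{n \geq 1} \sum_{p+q = n+1} \varrho_p \,\overline{\diamond}\, \varrho_q .
\end{align*}
Hence $\{\![\varrho, \varrho]\!\} = 0$ holds if and only if $\sum_{p+q = n+1} \varrho_p \,\overline{\diamond}\, \varrho_q = 0$ for every $n \geq 1$. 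Unpacking $\overline{\diamond}$ via its shuffle-sum definition, the resulting arity-$n$ identity matches term-for-term the shifted higher conformal Jacobi identity in the definition of an $L_\infty[1]$-conformal algebra. This gives the desired equivalence, and composing with the degree-shift proposition finishes the theorem.

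The principal obstacle I expect is the sign-and-shuffle bookkeeping, exactly analogous to the verification already needed for the bracket $[~,~]_{\mathrm{CNR}}$ in the non-homotopic case. One must check carefully that the piecewise $\lambda_\sigma^\dagger$ convention (depending on whether $\sigma(q) = n$ or $\sigma(n) = n$) is reproduced correctly by the shuffle sum defining $\varrho_p \,\overline{\diamond}\, \varrho_q$, and that the Koszul signs $\epsilon(\sigma)$ track the graded symmetry of the $\varrho_k$'s on both sides of the identification. Since the graded Lie bracket $\{\![~,~]\!\}$ was designed with precisely these conventions, once the matching is done explicitly in small arities and indexed by shuffles, the general-arity equality is formal, and the theorem follows.
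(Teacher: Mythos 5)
Your proposal is correct and follows essentially the same route as the paper: reduce via the degree-shift proposition to $L_\infty[1]$-conformal structures on $\mathcal{W}=\mathcal{L}[-1]$, then identify the Maurer-Cartan equation $\{\![\varrho,\varrho]\!\}=0$ with the shifted higher conformal Jacobi identities by expanding the bracket (\ref{new-gla}). The paper's own proof is only a two-line sketch of exactly this argument, so your additional detail on the factor of $2$ from $(-1)^{mn}=-1$ and the shuffle bookkeeping is a faithful elaboration rather than a different approach.
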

\begin{proof}
We have seen earlier that an $L_\infty$-conformal algebra structure on $\mathcal{L}$ is equivalent to an $\mathcal{L}_\infty [1]$-conformal algebra structure on $\mathcal{W} = \mathcal{L}[-1].$ Next, it follows from (\ref{new-gla}) that an $L_\infty [1]$-conformal algebra structure on $\mathcal{W}$ is equivalent to a Maurer-Cartan element in the graded Lie algebra $(C^\bullet_{cs} (S \mathcal{W}),  \{\![ ~, ~]\!\} )$. Hence the result follows. 
\end{proof}

In \cite{lada-markl} Lada and Markl showed that the skew-symmetrization of an $A_\infty$-algebra gives rise to an $L_\infty$-algebra. On the other hand, an associative conformal algebra gives a Lie conformal algebra by a suitable skew-symmetrization (cf. Remark \ref{skew-rem}). Unifying both the above results, we have the following.

\begin{thm}
    Let $(\mathcal{A},\{\mu_{k}\}_{k \geq 1})$ be an $A_{\infty}$-conformal algebra. Then the underlying graded $\mathbb{C}[\partial]$-module $\mathcal{A}$ carries an $L_{\infty}$-conformal algebra structure with the $\lambda$-multiplications $\{l_{k} : \mathcal{A}^{\otimes k} \to \mathcal{A}[\lambda_{1}, \ldots, \lambda_{k-1}]\}_{k \geq 1}$ given by 
    \begin{align*}
    (l_{k})_{\lambda_{1}, \ldots, \lambda_{k-1}}(a_{1},\ldots,a_{k}) = \sum_{\sigma \in \mathbb{S}_{k}} \mathrm{sgn} (\sigma) \epsilon(\sigma) ~(\mu_{k})_{\lambda_{\sigma (1)}, \ldots, \lambda_{\sigma (k)}} 
 (a_{\sigma(1)},\ldots,a_{\sigma(k)}) \big|_{\lambda_k \mapsto \lambda_k^\dagger},\mathrm{~for~ } a_{1}, \ldots, a_{k} \in \mathcal{A}.
    \end{align*}
\end{thm}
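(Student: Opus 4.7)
The plan is to leverage the Maurer-Cartan characterizations established earlier for both $A_{\infty}$-conformal and $L_{\infty}$-conformal algebras. By Proposition \ref{proposition 3.6} and its $L_\infty$-analogue, it suffices to prove the shifted version: if $(\mathcal{W}, \{\rho_k\}_{k \geq 1})$ is the $A_{\infty}[1]$-conformal algebra on $\mathcal{W} = \mathcal{A}[-1]$ corresponding to $(\mathcal{A}, \{\mu_k\}_{k \geq 1})$, then the symmetrized maps $\varrho_k := \mathrm{Sym}(\rho_k)$ defined by
\[
(\varrho_k)_{\lambda_1, \ldots, \lambda_{k-1}}(w_1, \ldots, w_k) := \sum_{\sigma \in \mathbb{S}_k} \epsilon(\sigma)\, (\rho_k)_{\lambda_{\sigma(1)}, \ldots, \lambda_{\sigma(k)}}(w_{\sigma(1)}, \ldots, w_{\sigma(k)})\big|_{\lambda_k \mapsto \lambda_k^{\dagger}}
\]
make $(\mathcal{W}, \{\varrho_k\}_{k \geq 1})$ into an $L_{\infty}[1]$-conformal algebra. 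Once this is done, a direct Koszul-sign computation identifies $(-1)^{k(k-1)/2}\, s \circ l_k \circ (s^{-1})^{\otimes k}$ with $\varrho_k$, so the unshifted statement about $\{l_k\}$ follows.

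First, I would dispense with the routine checks: each $\varrho_k$ is manifestly of degree $-1$ and graded symmetric by construction, while conformal sesquilinearity follows from that of $\rho_k$ since a permutation moving the $i$-th $\lambda$-argument to the last slot accounts precisely for the $\lambda_k^{\dagger}$-substitution and the sesquilinearity behavior under $\partial$. Next, by the Maurer-Cartan characterization, it suffices to show that $\varrho = \sum_{k \geq 1} \varrho_k$ is a Maurer-Cartan element in the graded Lie algebra $(C^\bullet_{cs}(S\mathcal{W}), \{\![~,~]\!\})$, knowing that $\rho = \sum_{k \geq 1} \rho_k$ is a Maurer-Cartan element in $(C^\bullet_{cs}(\mathcal{W}), \llbracket~,~\rrbracket)$.

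The heart of the argument is the following bracket intertwining identity, whose non-conformal prototype goes back to Lada-Markl: for any degree $-1$ conformal sesquilinear maps $\rho_k, \rho_l$,
\[
\mathrm{Sym}(\rho_k \diamond \rho_l) = \varrho_k \,\overline{\diamond}\, \varrho_l \cdot (\text{combinatorial factor})
\]
after a suitable normalization, where the factor accounts for the difference between the full permutation group appearing in the definition of $\mathrm{Sym}$ and the shuffle group appearing in $\overline{\diamond}$. Applying $\mathrm{Sym}$ to the $A_{\infty}[1]$-conformal identities $\sum_{k+l=n+1} \rho_k \diamond \rho_l = 0$ and collecting terms using this identity yields $\{\![\varrho, \varrho]\!\} = 0$, which is the shifted higher conformal Jacobi identity.

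The main obstacle is the bookkeeping in establishing the intertwining identity. The operation $\diamond$ sums over all $k$ insertion positions with no shuffle of arguments, while $\overline{\diamond}$ in (\ref{new-gla}) sums over $(l, k-1)$-shuffles with a fixed insertion at position one; matching them after full symmetrization requires factoring an arbitrary $\sigma \in \mathbb{S}_{k+l-1}$ uniquely as a shuffle times an insertion-position choice times a permutation of the inner $l$-block, and tracking three simultaneous sources of signs — the Koszul sign $\epsilon(\sigma)$ on the $w_i$'s, the parity of $\sigma$ itself (through skew vs. symmetric), and the conformal evaluation at $\lambda_k^{\dagger}$, which depends on whether the maximum-index element lands in the inner or outer block. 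Once these combinatorics are disentangled (which parallels the non-conformal case with the $\lambda$-variables treated on equal footing with the inputs), the rest of the proof is automatic.
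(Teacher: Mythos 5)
Your proposal is correct and follows essentially the same route as the paper: pass to the $A_\infty[1]$-conformal algebra via Proposition \ref{proposition 3.6}, use the Maurer--Cartan characterizations on both sides, and reduce everything to the fact that the symmetrization map $\mathfrak{S} : C^\bullet_{cs}(\mathcal{A}[-1]) \rightarrow C^\bullet_{cs}(S\mathcal{A}[-1])$ intertwines $\llbracket~,~\rrbracket$ with $\{\![~,~]\!\}$, so that $\{\![\varrho,\varrho]\!\} = \mathfrak{S}\llbracket\rho,\rho\rrbracket = 0$. The only difference is one of emphasis: the paper simply asserts that $\mathfrak{S}$ preserves the brackets, whereas you correctly flag the shuffle-versus-full-permutation bookkeeping (and the behavior of the $\lambda_k^\dagger$ substitution) as the point requiring verification.
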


\begin{proof}
    Since $(\mathcal{A}, \{ \mu_k \}_{k \geq 1})$ is an $A_\infty$-conformal algebra, it follows from Proposition \ref{proposition 3.6} that $(\mathcal{A}[-1], \{ \rho_k \}_{k \geq 1})$ is an $A_\infty [1]$-conformal algebra. Hence $\rho = \sum_{k \geq 1} \rho_k \in C^{-1}_{cs} (\mathcal{A}[-1])$ is a Maurer-Cartan element in the graded Lie algebra $(C^\bullet_{cs} (\mathcal{A}[-1]), \llbracket ~, ~ \rrbracket)$.

    On the other hand, a symmetrization of conformal sesquilinear maps yields a morphism $\mathfrak{S} : C^\bullet_{cs} (\mathcal{A}[-1]) \rightarrow C^\bullet_{cs} (S \mathcal{A}[-1])$ of graded vector spaces that preserves the corresponding graded Lie brackets. Further, we have $\mathfrak{S} (\rho) = \varrho$, where $\varrho = \sum_{k \geq 1} \varrho_k$ with $\varrho_k = (-1)^{\frac{k(k-1)}{2}} ~ s \circ l_k \circ (s^{-1})^{\otimes k}$, for $k \geq 1$. This shows that
    \begin{align*}
        \{ \! [ \varrho, \varrho ] \! \} = \{ \! [ \mathfrak{S}( \rho), \mathfrak{S} (\rho) ] \! \} = \mathfrak{S} \llbracket \rho, \rho \rrbracket = 0.
    \end{align*}
    Hence $\varrho = \sum_{k \geq 1} \varrho_k$ defines an $L_\infty[1]$-conformal algebra structure on the graded vector space $\mathcal{A}[-1]$. In other words, $\{ l_k \}_{k \geq 1}$ gives rise to an $L_\infty$-conformal algebra structure on $\mathcal{A}$.
\end{proof}

\medskip

\noindent {\bf Conflict of interest statement.} There is no conflict of interest.

\medskip

\noindent {\bf Data Availability Statement.} Data sharing is not applicable to this article as no new data were created or analyzed in this study.

\medskip

\noindent {\bf Acknowledgements.} 
Both authors thank Jiefeng Liu for his comments on the earlier version of the paper. Anupam Sahoo would like to thank CSIR, Government of India for funding the PhD fellowship. Both authors are greatful to Department of Mathematics, IIT Kharagpur for providing the beautiful academic atmosphere where the research has been carried out.


\begin{thebibliography}{BFGM03}














\bibitem{baez} J. C. Baez and A. S. Crans, Higher-dimensional algebra. VI. Lie $2$-algebras, {\em Theory Appl. Categ.} 12 (2004), 492-538.































\bibitem{andrea} A. D'Andrea and V. G. Kac, Structure theory of finite conformal algebras, {\em Selecta Math.} (N.S.) 4 (1998), 377–418.

\bibitem{bakalov-kac-voronov}B. Bakalov, V. G. Kac and A. Voronov, Cohomology of conformal algebras, {\em Comm. Math. Phys.} 200 (1999), 561-598.


\bibitem{dolguntseva} I. Dolguntseva, Hochschild cohomologies for associative conformal algebras, {\em Algebra and Logic} 46 (2007), 373-384.


\bibitem{fattori-kac}D. Fattori and V. G. Kac, Classification of finite simple Lie conformal superalgebras, {\em J. Algebra} 258 (2002), 23-59.

\bibitem{fre-zam} Y. Fr\'{e}gier and M. Zambon, Simultaneous deformations of algebras and morphisms via derived brackets, {\em J. Pure Appl. Algebra} 219 (2015), no. 12, 5344-5362.










\bibitem{hong-su} Y. Hong and Y. Su, Extending structures for Lie conformal algebras, {\em Algebr. Represent. Theory} 20 (2017), 209-230.

\bibitem{hou} B. Hou, Z. Shen and J. Zhao, Gerstenhaber algebra of the Hochschild cohomology of an associative conformal algebra, {\em Rev. Real Acad. Cienc. Exactas Fis. Nat. Ser. A-Mat.} (2023) 117:53.







\bibitem{keller} B. Keller, Introduction of $A_\infty$-algebras and modules, {\em Homology Homotopy Appl.} 3 (2001), 1-35.

\bibitem{kac-1} V. G. Kac, Vertex algebras for beginners. University Lecture Series, { 10} Providence, RI: American Mathematical Society, 1996. Second edition, 1998

\bibitem{kac-2} V. G. Kac, Formal distribution algebras and conformal algebras, https://arxiv.org/abs/q-alg/9709027.

\bibitem{kadei} T. V. Kadeisvili, On the theory of homology of fiber spaces, International Topology Conference (Moscow State Univ., Moscow, 1979), {\em Uspekhi Mat. Nauk} 35 (1980), no. 3 (213), 183-188.


\bibitem{kimura-voronov} T. Kimura and A. A. Voronov, The cohomology of algebras over moduli spaces, In: {\em The Moduli Space of Curves} (1995) 305-334, Birkhäuser Boston.


\bibitem{Kolesnikov} P. S. Kolesnikov, Associative conformal algebras with finite faithful representation, {\em Adv. Math.} 202 (2006), 602-637.

\bibitem{kolesnikov-kozlov} P. S. Kolesnikov and R. A Kozlov, On the Hochschild cohomologies of associative conformal algebras with a finite faithful representation, {\em Comm. Math. Phys.} 369 (2019), 351-370.

\bibitem{kontsevich-soibelman} M. Kontsevich and Y. Soibelman, Deformations of algebras over operads and the Deligne conjecture. In Conférence Moshé Flato 1999. Vol. 1, 255-307.






\bibitem{lada-markl} T. Lada and M. Markl, Strongly homotopy Lie algebras, {\it Comm. Algebra} 23 (1995), 2147-2161.

\bibitem{lada-stasheff} T. Lada and J. Stasheff, Introduction to SH Lie algebras for physicists, {\it Internat. J. Theoret. Phys.} 32 (1993), 1087-1103.











\bibitem{markl-a-inf} M. Markl, Tranferring $A_\infty$ (strongly homotopy associative) structures, {\em Rend. Circ. Math. Palermo (2) Suppl.} No. 79 (2006), 139-151. 

\bibitem{markl} M. Markl,  Intrinsic brackets and the $L_\infty$-deformation theory of bialgebras, {\em J. Homotopy Relat. Struct.} 5(1) (2010), 177-212.





\bibitem{retakh} A. Retakh, On associative conformal algebras of linear growth, {\em J. Algebra} 237 (2001), 769-788.






\bibitem{penkava-schwarz} M. Penkava and A. Schwarz, $A_\infty$-algebras and the cohomology of moduli Spaces, https://arxiv.org/abs/hep-th/9408064.

\bibitem{roitman-1} M. Roitman, Universal enveloping conformal algebras, {\em Selecta Math.} 6 (2000), 319-345.


\bibitem{roitman-3} M. Roitman, On embedding of Lie conformal algebras into associative conformal algebras, {\em J. Lie Theory} 15 (2005), 575-588.

\bibitem{sole-kac} A. D. Sole and V. G. Kac, Lie conformal algebra cohomology and the variational complex, {\em Comm. Math. Phys.} 292 (2009), 667-719.

\bibitem{sole-kac2} A. D. Sole and V. G. Kac, The variational Poisson cohomology, {\em Japan J. Math.} 8 (2013), 1-145.

\bibitem{stasheff} J. D. Stasheff, Homotopy associativity of $H$-spaces. I, II, {\em Trans. Amer. Math. Soc.} 108 (1963), 275-312.








\bibitem{wu} Z. Wu, Lie algebra structures on cohomology complexes of some $H$-pseudoalgebras, {\em J. Algebra} 395 (2013), 117-142.


\bibitem{yuan-liu} L. Yuan and J.Liu, Twisting theory, relative Rota-Baxter type operators and $L_\infty$-algebras on Lie conformal algebras, {\em J. Algebra} to appear.




\bibitem{tao} T. Zhang, Conformal Lie $2$-algebras and conformal omni-Lie algebras, https://arxiv.org/abs/2301.06903.
\end{thebibliography}
\end{document}